\theoremstyle{plain}
\newtheorem{thm}{Theorem}[section]
\newtheorem{cor}[thm]{Corollary}
\newtheorem{lem}[thm]{Lemma}
\newtheorem{prop}[thm]{Proposition}
\theoremstyle{definition}
\newtheorem{df}[thm]{Definition}
\theoremstyle{remark}
\newtheorem{rmk}[thm]{Remark}
\newtheorem{ejp}[thm]{Example}
\newtheorem{question}[thm]{Question}
\DeclareMathOperator{\dive}{div}
\DeclareMathOperator{\diam}{diam}
\DeclareMathOperator{\clos}{clos}
\DeclareMathOperator{\dist}{dist}
\newcommand{\fix}{\hbox{Sing}}
\newcommand{\sing}{\fix}
\newcommand{\T}{\mathbb T}
\newcommand{\R}{\mathbb R}
\newcommand{\Q}{\mathbb Q}
\newcommand{\Z}{\mathbb Z}
\newcommand{\N}{\mathbb N}
\newcommand{\X}{\mathbb X}
\newcommand{\reparam}{\mathcal{H}_0^+(\R)}
\renewcommand{\epsilon}{\varepsilon}
\newcommand{\nUparrow}{\Uparrow\hspace{-.34cm}\diagup\hspace{.31cm}}
\begin{document}

\author[A. Artigue]{Alfonso Artigue}
\email{artigue@unorte.edu.uy}
\address{DMEL, Regional Norte, Universidad de la Rep\'ublica, Gral. Rivera 1350, Salto, Uruguay.}
\title{Kinematic Expansive flows}
\date{\today}
\keywords{expansive flows, time changes of flows, reparametrizations, robust expansiveness, surface flows}
\begin{abstract}
In this paper we study kinematic expansive flows on compact metric spaces, surfaces and general manifolds. 
Different variations of the definition are considered and its relationship 
with expansiveness in the sense of Bowen-Walters and Komuro is analyzed. 
We consider continuous and smooth flows and robust kinematic expansiveness of vector fields is considered on smooth manifolds.
\end{abstract}
\maketitle

\section{Introduction}
Let us start explaining the meaning of \emph{kinematic expansiveness} discussing a well known physical example. 
Consider the differential equation of a simple pendulum:
$ \ddot \theta + \sin(\theta)=0.$
It is known since Galileo Galilei that the period of the oscillations is almost constant if the amplitude is small. 
But, if $T(\theta_0)$ is the period of an oscillation of amplitude $\theta_0$
it can be proved that $T$ is strictly increasing for $\theta_0\in [0,\pi)$ (see for example \cite{BB} for a proof).
Consider two close initial positions of the pendulum with vanishing initial velocities. 
Since the periods of the oscillations are different, we have that the solutions will be separated at some time. 
This is the meaning of kinematic expansiveness. 

A key point for a pendulum clock as a practical timekeeper is that this separation time is large. 
In fact, it is easy to see that the separation is linear in time. 
This is a special feature of kinematic expansiveness, 
they are not so chaotic as a system with exponential error propagation.

In this paper we mainly study dynamical systems on surfaces, therefore we consider the usual change of variables 
$x=\theta$ and $y=\dot\theta$, to transform the equation of the pendulum into a first order differential equation in the plane. 
Consider two periodic solutions $\gamma_1$ and $\gamma_2$ bounding an annulus $A$ in the plane. 
If $\phi\colon \R\times A\to A$ is the action of $\R$ on $A$ induced by the pendulum equations we have our first example 
of a kinematic expansive flow on a compact surface. 
Precise definitions are given in the following section. 

The above considerations are related with the stability (or unstability) of trajectories in the sense of Lyapunov. 
In dynamical systems, another fundamental concept is the \emph{structural stability} due to Andronov and Pontryagin. 
A system is structurally stable if there is a neighborhood of the system (in a specified topology) 
such that every system of this neighborhood has an \emph{equivalent} behavior. 
In the discrete time case, two diffeomorphisms $f,g$ of a manifold $M$ are equivalent or \emph{conjugated} 
if there is a homeomorphism $h\colon M\to M$ such that $f\circ h= h\circ g$. 
In the continuous time case, one can say that two flows $\phi$ and $\psi$ are conjugated if there is 
a homeomorphism $h$ as before such that $\phi_t\circ h=h\circ \psi_t$ for all $t\in\R$. 
This concept is very restrictive, because if there is a closed 
trajectory then its period should be preserved under perturbations. 
But this is impossible because slightly changing the velocities of the system one obtains a small perturbation (on any reasonable topology) 
and the periods of the perturbed system are different. 
Therefore, we must consider the concept of \emph{topological equivalence}. 
Two flows are topologically equivalent if there is a homeomorphism that preserves trajectories and orientations. 
If the homeomorphism is the identity of the phase space we have that each flow is a (global) \emph{time change} of the other. 
It is also called a \emph{reparametrization} of the flow. 

If one is allowed to change the velocities of single trajectories, the distance between two whole orbits can be measured 
with the \emph{Fr\'echet distance}. If $\alpha,\beta\colon \R\to X$ are continuous curves on a metric space $(X,\dist)$, 
then the Fr\'echet or \emph{geometric} distance between the curves is
\[
 \dist_F(\alpha,\beta)=\inf_{h}\sup_{t\in \R} \dist(\alpha(t),\beta(t)),
\]
where $h\colon\R\to\R$ varies in the set of increasing homeomorphisms of $\R$.
This takes us to the concept of \emph{geometric expansiveness}, 
that is similar to kinematic expansiveness but allowing time reparametrizations of trajectories. 
This concept was first considered in the literature by Anosov 
to prove the structural stability of now called Anosov flows. 

Later, Bowen and Walters introduced a definition of expansive flow, see \cite{BW}, 
that on arbitrary compact metric spaces allowed them to prove some properties shared with Anosov flows. 
Their definition is of a \emph{geometric} nature, that is, they require that trajectories are separated 
even allowing time changes of single orbits.
In \cite{BW} it is noticed that kinematic expansiveness is not enough in order to recover results of hyperbolic flows. 
In the introduction of cited paper they consider a flow topologically equivalent with the pendulum system described above. 
Some of the results in \cite{BW} were generalized 
in \cite{KS79} considering different families of reparametrizations and acting groups.

A different and very interesting kind of expansiveness was discovered by Gura. 
In \cite{Gura}, he proved that the horocycle flow of a surface with negative curvature is positive and negative 
(kinematic) separating, his definition requires to separate every pair of points in different orbits. 
He also proved a remarkable result: every \emph{global} time change of such flow is positive and negative kinematic separating. 
It is known that horocycle flow is not geometric expansive.

The aim of this paper is to study kinematic expansiveness. 
Examples and basic properties are mainly stated on compact surfaces. 
A special feature of kinematic expansiveness is the non-invariance under global time changes. 
Therefore we also consider the definition of strong kinematic expansiveness requiring that every global time change 
must be kinematic expansive. 
A natural question is: why call it strong kinematic and not weak geometric? 
The answer can be found in the following example. 
Consider $X$ a vector field in a two-dimensional torus $\T^2$ generating an irrational flow. 
Take a non-negative map $\rho$ with just one zero at $p\in\T^2$. 
Define the vector field $Y=\rho X$ and let $\phi$ be its associated flow. 
As we will see, $\phi$ and its global time changes are kinematic expansive. 
The separation of trajectories is not geometric because generic orbits are parallel straight lines.

This paper is organized as follows.

Section \ref{hierarchy}. We define and state the basic properties of expansive and separating flows 
in the kinematic, strong kinematic and geometric versions. 
Examples are given to analyze the relationship between the definitions. 

Section \ref{HieOnSurf}. We consider flows on compact surfaces. 
We prove that on surfaces every geometric separating flow is geometric expansive (i.e. $k^*$-expansive in the sense of Komuro \cite{K}).
We also show that a flow on a surface is strong kinematic expansive if and only if 
it is strong separating; and also equivalent with:
its singularities are of saddle type and 
the union of the separatrices is dense in the surface. 

Section \ref{secSuspension}. We study the kinematic expansiveness of suspension flows. 
We found a dynamical characterization of the topology of compact subsets of the real line 
related with kinematic expansive suspensions.
We give a characterization of arc homeomorphisms admitting a kinematic expansive suspension. 
We prove that the only $C^1$ diffeomorphism of an interval admitting a kinematic expansive suspension 
is the identity. A similar study is done for circle homeomorphisms and diffeomorphisms.

Section \ref{secKinSurf}. We consider kinematic expansive flows of surfaces. 
We study the relationship between singularities and kinematic expansiveness in the disc and in the annulus. 
We show that every compact surface admits a kinematic expansive flow. 

Section \ref{secPositive}. For positive geometric expansive flows on compact metric spaces it is known (see \cite{Artigue2}) 
that the dynamic is trivial (a finite number of compact orbits). 
The case of positive kinematic expansiveness is different, as is shown in the examples of this section. 
We study the local behavior of the flow near a compact orbit. 
On surfaces, we prove that positive expansive flows are suspensions and has no singularities. 
The smooth case is also considered. 
We consider a variation of an example in \cite{KS} to show (on a compact metric space) that 
a positive kinematic expansive flow may not be negative kinematic expansive. 


Section \ref{secRobust}. In this section we consider perturbations of $C^1$ kinematic expansive vector fields. 
We first give a sufficient condition for robust kinematic expansiveness for conservative vector fields in the annulus. 
Finally we consider a general manifold and vector fields without singularities. 
In this setting we prove that $C^1$-robust kinematic expansiveness implies geometric expansiveness (i.e., expansiveness 
in the sense of Bowen and Walters \cite{BW}).

\section{Hierarchy of expansive flows}
\label{hierarchy}
In this section we present the main definitions.
Let $(X,\dist)$ be a compact metric space and $\phi\colon \R\times
X\to X$ be a continuous flow. 
We say that $p\in X$ is a \emph{singularity} or an \emph{equilibrium} point of $\phi$ if $\phi_t(p)=p$ for all $t\in\R$.
To understand any definition of expansive flow one must consider the following 
simple fact.
\begin{rmk}
It holds that if there is at least one non-singular point $x\in X$ then for all 
$\delta>0$ there exists $s\in\R$, such that
$y=\phi_s(x)\neq x$ and for all $t\in\R$, $\dist(\phi_t(x),\phi_t(y))<\delta$. 
Moreover, the value of $s$ may be as small as we want.
\end{rmk}
Then, if we are going to define
a notion of expansive or separating flow we must take care of points in the same orbit. 
In the subject of expansive flows we consider the hierarchy shown in Table \ref{tablaExp}.
\begin{table}[h]
\[
\begin{array}{ccc}
\hbox{\fbox{Geometric Expansive}} & \Rightarrow & \hbox{\fbox{Geometric Separating}} \\
\Downarrow && \Downarrow\\
\hbox{\fbox{Strong Kinematic Expansive}} & \Rightarrow & \hbox{\fbox{Strong Separating}}\\
\Downarrow && \Downarrow\\
\hbox{\fbox{Kinematic expansive}} & \Rightarrow & \hbox{\fbox{Separating}}\\
\end{array}
\]
\caption{Hierarchy of expansive flows}
\label{tablaExp}
\end{table}
The terms \emph{kinematic} and \emph{geometric} first appear in the literature of expansive systems (to our best knowledge) in \cite{CL} (page 138).
Definitions in the left column of the table separate every pair of points not being in the same \emph{local} orbit,
and the ones in the right separate points in different \emph{global} orbits. 
Strong expansiveness deals with time changes of the whole flow and the geometric notions
allows time changes of single orbits. 
As the reader will see, the implications indicated by the arrows are easy to prove.
Now we give the precise definitions, examples and 
counterexamples showing that no arrow in the table can be reversed in the general setting of compact metric spaces. 
We also state some basic properties. 

\subsection{Kinematic expansive flows} 

Let us start with the main notion of the paper. 

\begin{df} 
\label{dfkexp}
We say that $\phi$ is \emph{kinematic expansive} if 
for all $\epsilon>0$ there
exists $\delta>0$ such that if $\dist(\phi_t(x),\phi_t(y))<\delta$
for all $t\in\R$ then there exists $s\in\R$ such that
$|s|<\epsilon$ and $y=\phi_s(x)$.
\end{df}

In \cite{KS79} kinematic expansiveness is considered with the name $\{\hbox{id}\}$-\emph{expansiveness}. 
This means that the only reparametrization allowed is the identity of $\R$.
This definition was also mentioned in (the first section of) \cite{BW}. 

Two continuous flows $\phi\colon \R\times X\to X$ and $\psi\colon \R\times Y\to Y$ are said to be \emph{equivalent} 
if there exists a homeomorphism $h\colon X\to Y$ such that $\phi_t = h^{-1}\circ \psi_t\circ h$ for all $t\in \R$.

\begin{rmk}
Clearly, kinematic expansiveness is invariant
under flow equivalence, i.e., it does not depend on the metric defining the topology of $X$. 
\end{rmk}

A continuous flow $\psi\colon \R\times X\to X$ is \emph{topologically equivalent} with $\phi\colon\R\times Y\to
Y$ if there is a homeomorphism $h\colon X\to Y$ such that for each $x\in X$ the orbits $\phi_\R(h(x))$ and $\psi_\R(x)$  and its orientations coincide. 
If in addition, the homeomorphism $h$ is the identity of $X$, we say that $\phi$ is a \emph{time change} of $\psi$.

The following example is topologically equivalent with the pendulum system (restricted to an annulus as mentioned above) and 
shows that a time change can
destroy kinematic expansiveness.

\begin{ejp}[Periodic band]\label{annulus}
Consider the annulus in the plane 
\[
 A=\{(x,y)\in\R^2:x^2+y^2\in[1,4]\}
\]
bounded by circles of radius 1 and 2. 
A flow $\phi$ on $A$ can be defined by the equation
\[
(\dot x,\dot y)=\frac 1{\sqrt{x^2+y^2}}(-y,x). 
\]
The solutions are circles as shown in Figure \ref{perAnnulus}.
It is easy to see that this flow is kinematic expansive (there is a proof in Example 2 of \cite{KS79} page 84). 
\begin{figure}[htbp]
\begin{center}
\includegraphics{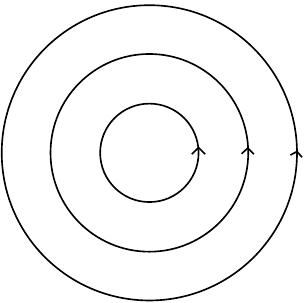}
    \caption{Periodic orbits in the annulus.}
   \label{perAnnulus}
\end{center}
\end{figure}
The equation $(\dot x,\dot y)=(-y,x)$ defines a time change of $\phi$ that is not kinematic expansive because the angular velocities are constant.
\end{ejp}

\begin{rmk}
 At the end of Definition \ref{dfkexp} above, we required that the points $x$ and $y$ are in a orbit segment of small time. 
 Since our phase space is compact, this segment has a small diameter too. 
 Notice that the converse is true if there are no singularities, i.e., orbit segments of small diameter 
 are defined by small times. 
 But if there is a singularity accumulated by regular orbits this is no longer true as the reader can verify, 
 just apply the continuity of the flow at the singular point.
\end{rmk}

In spite of this remark we will show that if we require in Definition \ref{dfkexp} that $x$ and $y$ are in a 
orbit segment of small diameter (instead of small time) we obtain an equivalent definition.
Let us first introduce another distance in $X$ by
$$\dist_\phi(x,y)=\inf\{\diam(\phi_{[a,b]}(z)):z\in X, [a,b]\subset\R, x,y\in\phi_{[a,b]}(z)\}$$
if $x,y$ are in the same orbit and $\dist_\phi(x,y)=\infty$ in other case. 
Of course, this metric will define a different topology on $X$ if $X$ is not just a periodic orbit.

\begin{prop}
A flow $\phi$ is kinematic expansive if and only if 
for all
$\beta>0$ there exists $\delta >0$ such that if
$\dist(\phi_t(x),\phi_t(y))<\delta$ for all $t\in\R$ then
$\dist_\phi(x,y)<\beta$.

\end{prop}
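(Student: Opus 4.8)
The plan is to prove the two implications separately; the first is routine and the second is the substance of the statement.

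\emph{Kinematic expansiveness implies the $\dist_\phi$-condition.} Fix $\beta>0$. Since $X$ is compact and $\phi_0=\mathrm{id}$, uniform continuity of $\phi$ on $[-1,1]\times X$ gives $\epsilon\in(0,1]$ with $\diam(\phi_{[-\epsilon,\epsilon]}(z))<\beta$ for all $z\in X$. Applying kinematic expansiveness to this $\epsilon$ produces $\delta>0$ such that $\dist(\phi_t(x),\phi_t(y))<\delta$ for all $t$ forces $y=\phi_s(x)$ with $|s|<\epsilon$; then $x,y\in\phi_{[-\epsilon,\epsilon]}(x)$, so $\dist_\phi(x,y)\le\diam(\phi_{[-\epsilon,\epsilon]}(x))<\beta$.

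\emph{The $\dist_\phi$-condition implies kinematic expansiveness.} I would argue by contraposition. Suppose $\phi$ is not kinematic expansive: there are $\epsilon_0>0$ and points $x_n,y_n$ with $\dist(\phi_t(x_n),\phi_t(y_n))<1/n$ for all $t$, yet no $s$ with $|s|<\epsilon_0$ and $y_n=\phi_s(x_n)$. If $\dist_\phi(x_n,y_n)\not\to0$ then, along a subsequence, $\dist_\phi(x_n,y_n)\ge\beta_0>0$, and the pairs $(x_n,y_n)$ themselves refute the $\dist_\phi$-condition. So assume $\dist_\phi(x_n,y_n)\to0$. Then for large $n$ the two points lie on a common orbit; choosing an arc through them whose diameter is within a factor $2$ of $\dist_\phi(x_n,y_n)$, we may write $y_n=\phi_{c_n}(x_n)$ with $c_n\ge0$ and $\diam(\phi_{[0,c_n]}(x_n))\to0$, and $c_n\ge\epsilon_0$ because otherwise $s=c_n$ would be admissible. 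Passing to a subsequence with $x_n\to\bar x$, the relations $\diam(\phi_{[0,c_n]}(x_n))\to0$ and $c_n\ge\epsilon_0$ give $\phi_t(\bar x)=\bar x$ for $|t|\le\epsilon_0$, so $\bar x$ is a singularity and $y_n\to\bar x$.

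It remains to derive a contradiction with the $\dist_\phi$-condition from this configuration near $\bar x$. First, iterating $\dist(\phi_t(x_n),\phi_{t+c_n}(x_n))<1/n$ a bounded number of times shows $\diam(\phi_{[-Kc_n,Kc_n]}(x_n))\to0$ for each fixed $K$; thus $\phi$ has orbit arcs of arbitrarily long duration and vanishing diameter collapsing onto $\bar x$. The key point — and the step I expect to be the main obstacle — is to upgrade this to a global statement: that the whole orbit of $x_n$ is trapped in a ball $B(\bar x,r_n)$ with $r_n\to0$. Granting this, if infinitely many $x_n$ are periodic we get pairwise disjoint periodic orbits $O_n\subset B(\bar x,r_n)$, and for $p\in O_n$, $q\in O_{n+1}$ one has $\dist(\phi_t(p),\phi_t(q))\le 2\,\max\{r_n,r_{n+1}\}\to0$ for all $t$ while $\dist_\phi(p,q)=\infty$, contradicting the hypothesis; the non-periodic case is reduced to a parallel contradiction after a further limiting argument (taking Hausdorff limits of the orbit closures, which consist of singularities and periodic orbits and collapse onto $\bar x$). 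The global trapping is where the real work lies: one must exclude the orbit of $x_n$ leaving a fixed neighbourhood of $\bar x$. The almost-periodicity forces any such escape to occur only after a time comparable to $c_n/\delta_n$, which tends to infinity, and a compactness argument — together with the elementary fact that $\phi$ displaces points slowly near a singularity — should rule it out; this bookkeeping, especially when $c_n\to\infty$ and when the relevant orbits are non-periodic, is the delicate part of the proof.
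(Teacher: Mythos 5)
Your forward implication is fine and is essentially the paper's argument. The reverse implication, however, has a genuine gap: after correctly reducing to pairs $x_n\to\bar x$, $y_n=\phi_{c_n}(x_n)$ with $c_n\ge\epsilon_0$, $\diam(\phi_{[0,c_n]}(x_n))\to 0$ and $\bar x$ singular, you never actually derive a contradiction — you defer it to the ``global trapping'' claim that the whole orbit of $x_n$ lies in a ball of radius $r_n\to 0$ about $\bar x$, and that claim does not follow from what you have. Writing $t=kc_n+r$ with $r\in[0,c_n)$, the almost-periodicity $\dist(\phi_t(x_n),\phi_{t+c_n}(x_n))<1/n$ only gives $\dist(\phi_t(x_n),x_n)\le\diam(\phi_{[0,c_n]}(x_n))+|k|/n$, i.e. control on time scales $|t|=o(nc_n)$; nothing you have written prevents the orbit of $x_n$ from leaving every fixed neighbourhood of $\bar x$ at larger times. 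Moreover your fallback for the non-periodic case (Hausdorff limits of orbit closures ``consisting of singularities and periodic orbits'') is a Poincar\'e--Bendixson-type statement with no justification on a general compact metric space, which is the setting of this proposition. As written, the heart of the implication is missing.

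The paper avoids all of this with a direct argument whose key step you are missing: the hypothesis is invariant under translating a pair along the flow. Concretely: (i) only finitely many orbits have diameter below a suitable threshold, so the singular set $\fix$ is finite and a sufficiently small whole-orbit diameter forces the point to be singular; (ii) a compactness argument produces $\beta_3>0$ and $\rho>0$ such that any orbit segment of diameter $<\beta_3$ whose initial point lies outside $B_\rho(\fix)$ has time length $<\epsilon$; (iii) given $\dist(\phi_t(x),\phi_t(y))<\delta_3$ for all $t$ (with $\delta_3$ the constant furnished by the hypothesis for $\beta_3$), one takes $x$ non-singular, flows to a time $t_0$ with $\phi_{t_0}(x)\notin B_\rho(\fix)$, applies the hypothesis to the translated pair $\phi_{t_0}(x),\phi_{t_0}(y)$ to get $\phi_{t_0}(y)=\phi_s(\phi_{t_0}(x))$ inside an arc of diameter $<\beta_3$, concludes $|s|<\epsilon$, and pulls back to $y=\phi_s(x)$. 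This translation trick, not any global confinement of orbits near the singularity, is exactly what handles the configuration where your argument stalls; if you want to keep the contrapositive framing, apply the hypothesis to the translated points $\phi_{t_0}(x_n)$, $\phi_{t_0}(y_n)$ with $\phi_{t_0}(x_n)$ outside a fixed neighbourhood of $\fix$, rather than to $x_n,y_n$ themselves.
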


\begin{proof}
($\Rightarrow$) Consider $\beta>0$ and take $\epsilon>0$
such that $\dist(\phi_t(x),x)\leq\beta$ for all $x\in X$ and $t\in
[-\epsilon,\epsilon]$. By hypothesis, there exists $\delta$ such
that if $\dist(\phi_t(x),\phi_t(y))<\delta$ for all $t\in\R$ then
there is $s\in \R$ such that $|s|<\epsilon$ and $\phi_s(x)=y$.
Then $\dist_\phi(x,y)<\beta$ and the proof ends.

($\Leftarrow$) First we fix $\epsilon>0$.
Take any $\beta_1>0$ and by hypothesis there exists $\delta_1$ such that if
$\dist(\phi_t(x),\phi_t(y))<\delta_1$ for all $t\in\R$ then $\dist_\phi(x,y)<\beta_1$. 
It is easy to see that there is
just a finite number of orbits with diameter smaller than
$\delta_1/2$.
Now take $\beta_2>0$ such that if $\diam (\phi_\R(p))<\beta_2$
then $p$ is a singular point. For this value of $\beta_2$ there is
an expansive constant $\delta_2$ (by hypothesis).

Take $\rho<\delta_2/2$ and denote by $\sing$ the set of singular points of the flow. It is easy to see that for all $x\in
B_\rho(\fix)=\cup_{q\in\sing}B_\rho(q)$, $x\notin\fix$, there exists $t_0\in\R$ such that
$\phi_{t_0}(x)\notin B_\rho(\fix)$. We will prove that there is
$\beta_3\in (0,\beta_2)$ such that if $x\notin B_{\rho}(\fix)$ and
$\diam(\phi_{[0,t]}(x))<\beta_3$ then $|t|<\epsilon$. By contradiction suppose there exists $x_n\to z$,
$x_n\notin B_\rho(\fix)$ and $t_n\to\infty$ such that
$\diam(\phi_{[0,t_n]}(x_n))\to 0$. This implies that $z\in\fix$
which is a contradiction. 

Finally we claim that $\delta_3$ is an expansive constant associated to
$\epsilon$. In order to prove it, suppose that
$\dist(\phi_t(x),\phi_t(y))<\delta_3$ for all $t\in\R$. We can
assume that $x$ is not a singular point and then there exists
$t_0\in\R$ such that $\phi_{t_0}(x)\notin B_\rho(\fix)$. Then
$$\dist(\phi_t(\phi_{t_0}(x)),\phi_t(\phi_{t_0}(y)))<\delta_3$$ and
the hypothesis implies that there is $s\in\R$ such that
$\phi_s(\phi_{t_0}(x))=\phi_{t_0}(y)$ and also the diameter of
$\phi_{[0,s]}(\phi_{t_0}(x))$ is smaller than $\beta_3$. Since
$\phi_{t_0}\notin B_\rho(\fix)$ we have that $|s|<\epsilon$ and
then $\phi_s(x)=y$ with $|s|<\epsilon$.
\end{proof}

\subsection{Strong kinematic expansive flows}
As we saw in Example \ref{annulus} kinematic expansiveness is not an invariant property 
under time changes of flows. Therefore the following definition is natural.

\begin{df}
A flow is said to be \emph{strong kinematic expansive} if every time change is kinematic expansive. 
\end{df}

\begin{ejp}\label{torussing}
Consider an irrational flow (every orbit is dense) on the two dimensional torus $T^2=\R^2/\Z^2$ with velocity field $X$. 
Take any non-negative smooth function $f$ with just one zero at some point $p$ in the torus. 
Denote by $\phi$ the flow 
generated by the vector field $fX$. 
Such flow is illustrated in Figure \ref{IrrFlow}. 
To show that $\phi$ is strong kinematic expansive consider any time change of the flow.
The idea is the following. Take two points being in different local orbits and wait until one of them is very close to $p$.
By continuity this point will stay close to $p$ for a long time while the other point will be separated. This argument will
be formalized latter in Theorem \ref{skexpsurf}.
\begin{figure}[htbp]
\begin{center}
\includegraphics{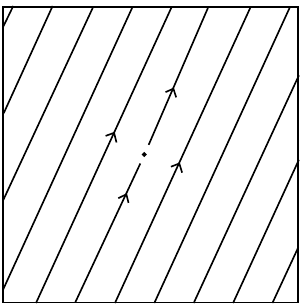}
    \caption{Irrational flow in the torus with a fake saddle.}
   \label{IrrFlow}
\end{center}
\end{figure}

\end{ejp}

\begin{rmk}
The periodic flow in the annulus shown in Example \ref{annulus} is kinematic expansive but not in the strong sense. 
\end{rmk}

\subsection{Geometric expansive flows} 
The idea of geometric expansiveness is that the trajectories separate even if one allows a time change of the trajectories.
Denote by $\reparam$ the set of all increasing homeomorphisms $h\colon \R\to\R$ such that $h(0)=0$.
Such homeomorphisms will be called \emph{time reparametrizations}.
\begin{df} We say that $\phi$ is \emph{geometric expansive}  if for all $\beta>0$ there
exists $\delta>0$ such that if
$\dist(\phi_{h(t)}(x),\phi_t(y))<\delta$ for all $t\in\R$ with
$h\in\reparam$ then $x,y$ are in a orbit segment of diameter smaller than $\beta$.
\end{df}

In the literature these flows are simply called \emph{expansive}.
In the case of regular flows, i.e. without equilibrium points, 
it is equivalent with the one given by R. Bowen and P. Walters in
\cite{BW}. 
For the general case (i.e. with or without singular points) the definition is equivalent with the given
by M. Komuro in \cite{K} (see \cite{Artigue} for a proof). 
Examples of geometric expansive flows are suspensions of expansive homeomorphisms \cite{BW}, Anosov flows, 
the Lorenz attractor \cite{K} and singular suspensions of expansive interval exchange maps \cite{Artigue}.

\begin{rmk}
It is easy to see that Examples \ref{annulus} and \ref{torussing} are not geometric expansive.
\end{rmk}

\subsection{Separating flows} The term \emph{separating} was first used in \cites{Gura,Gura75}. 
This kind of expansiveness only separates points in different global orbits.

\begin{df}A flow $\phi$ is \emph{separating} if there is $\delta>0$ such that if 
$\dist(\phi_t(x),\phi_t(y))<\delta$ for all $t\in\R$ then $y\in\phi_\R(x)$.
\end{df}

\begin{ejp}[Minimal separating flow in the torus] 
\label{ejpDH}
In \cite{DH} it is defined a continuous (non-smooth) time change of an irrational 
flow on the two dimensional torus $T^2$ with the following property:
the set $\{(\phi_t(x),\phi_t(y)):t\geq 0\}$ is dense in 
$T^2\times T^2$ whenever $x$ and $y$ are on different orbits in $T^2$.
Clearly, it implies that the flow is separating.
We were not able to decide if this example is kinematic expansive or not.
\end{ejp} 

The following is an easy example showing that there are separating flows that are not kinematic expansive.

\begin{ejp}[A separating flow in the M\"oebius Band]
\label{exmoebius}
 Consider the map $f\colon [-1,1]\to[-1,1]$ given by $f(x)=-x$ and consider $T(x)=1+x^2$ for all $x\in[-1,1]$. 
 It is easy to see that the suspension flow of $f$ with return time $T$ is a separating flow in the M\"oebius band. 
 See Figure \ref{figMoebius}.
\begin{figure}[htbp]
\begin{center}
\includegraphics{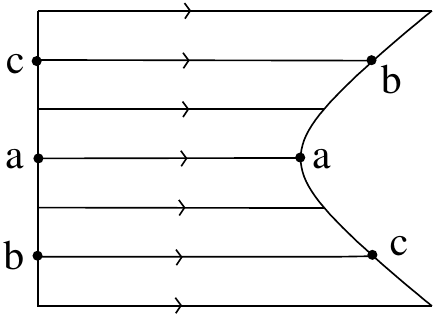}
    \caption{A separating flow in the Moebius band.}
   \label{figMoebius}
\end{center}
\end{figure}
 
\end{ejp}

\begin{rmk}
 The previous example in the Moebius band is not kinematic expansive. 
 Consider two points as $c$ and $b$ in Figure \ref{figMoebius}. 
 They are in the same orbit but not in a small orbit segment. 
 Taking them close to the point $a$ we contradict kinematic expansiveness.
\end{rmk}

Let us give some general remarks that hold for every notion of expansiveness considered in this article.
Recall that the definition of \emph{separating flow} is the weaker in Table \ref{tablaExp}.

\begin{df}
 A singularity $p$ of $\phi$ is $\phi$-\emph{isolated} if there is 
 $\delta>0$ such that for all $x\in B_\delta(p)$, $x\neq p$, there is $t\in\R$ such that 
 $\phi_t(x)\notin B_\delta(p)$.
\end{df}

\begin{rmk}
\label{rmkSing}
 If $\phi$ is separating and $p\in X$ is a singular point of the flow then 
 $p$ is $\phi$-isolated. 
 In particular, the set of singular points is finite. 
\end{rmk}

\subsection{Strong separating flows} 

\begin{df} 
A flow is \emph{strong separating} if every time change is separating. 
\end{df}

The following is a remarkable example.

\begin{ejp}
\label{horociclico}
In \cite{Gura} it is shown that the horocycle flow on a surface of negative curvature is strong separating. 
In fact, Gura shows that the separation of trajectories occurs in positive and in negative times. 
\end{ejp}

\begin{rmk} 
The example shown in \cite{DH} (recall Example \ref{ejpDH}) is separating but it is not strong separating. 
\end{rmk}

\subsection{Geometric separating flows} 
\begin{df} A flow $\phi$ is said to be \emph{geometric separating} 
if there exists $\delta>0$ such that if 
$\dist(\phi_{h(t)}(x),\phi_t(y))<\delta$ for all $t\in\R$ and some
$h\in\reparam$ then $y\in\phi_\R(x)$.
\end{df}

To study geometric separating flows we introduce a natural definition for dynamics with discrete time. 
\begin{df}
 \label{dfSepHomeo}
We say that a homeomorphism $f\colon Y\to Y$ is 
\emph{separating}\footnote{In \cite{Gura75} Gura calls \emph{separating} to what we call \emph{expansive homeomorphism}. We use the expression \emph{separating homeomorphism} with a different meaning.} if there is $\delta>0$ such that 
if $\dist(f^n(x),f^n(y))< \delta$ for all $n\in\Z$ then there is $m\in\Z$ such that $y=f^m(x)$.
\end{df}

\begin{prop}
 A suspension is geometric separating if and only if the suspended homeomorphism is separating.
\end{prop}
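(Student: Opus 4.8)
The plan is to pass between the suspension flow and its base map through the canonical global cross section. Since geometric separating is invariant under flow equivalence and under time changes of the flow (in the definition a time change merely replaces $h\in\reparam$ by its composition with the relevant cocycle, which is again in $\reparam$), and any two suspensions of a homeomorphism $f\colon Y\to Y$ are flow equivalent to time changes of one another, I may take the roof function to be the constant $1$: so $X=Y\times[0,1]/(z,1)\sim(f(z),0)$, $\phi_s[z,u]=[z,u+s]$, the section $\Sigma=\{[z,0]:z\in Y\}$ is compact, the first return map to $\Sigma$ is $f$, and every orbit meets $\Sigma$. I will use two facts, both immediate from compactness of $X$ and the local product structure along $\Sigma$: (i) the ``height mod $1$'' map $\pi\colon X\to\T$, $\pi[z,u]=u+\Z$, is continuous, so there is a modulus $\rho(\delta)\to 0$ as $\delta\to0$ with $\dist_X(p,q)<\delta\Rightarrow\dist_\T(\pi(p),\pi(q))<\rho(\delta)$; and (ii), shrinking $\rho$ if needed, any point within $\delta$ of a point $[z,0]\in\Sigma$ equals $\phi_r([z',0])$ for some $[z',0]\in\Sigma$ on its own orbit with $|r|<\rho(\delta)$ and $\dist(z,z')<\rho(\delta)$.

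For ``$\phi$ geometric separating $\Rightarrow$ $f$ separating'': let $\delta$ be a geometric separating constant for $\phi$, and by uniform continuity of $(z,z',u)\mapsto\dist_X([z,u],[z',u])$ on the compact set $Y\times Y\times[0,1]$ choose $\delta_0$ so that $\dist(z,z')<\delta_0$ forces $\dist_X([z,u],[z',u])<\delta$ for all $u$. If $\dist(f^n(x),f^n(y))<\delta_0$ for all $n\in\Z$, then writing $t=n+u$ gives $\dist_X(\phi_t([x,0]),\phi_t([y,0]))<\delta$ for every $t$; applying the definition with the reparametrization $h=\mathrm{id}\in\reparam$ yields $[y,0]\in\phi_\R([x,0])$, hence $[y,0]=[x,s]$ for some $s$, which forces $s\in\Z$ and $y=f^s(x)$. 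Thus $f$ is separating. (This direction needs nothing beyond uniform continuity and the witness $h=\mathrm{id}$.)

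For the converse, let $\delta_0$ be a separating constant for $f$ and pick $\delta$ with $\rho(\delta)<\min\{\delta_0,1/4\}$. Suppose $\dist_X(\phi_{h(t)}(a_0),\phi_t(b_0))<\delta$ for all $t$, with $h\in\reparam$. Choosing $\tau$ with $\phi_\tau(b_0)\in\Sigma$ and replacing $b_0,a_0,h$ by $b=\phi_\tau(b_0)$, $a=\phi_{h(\tau)}(a_0)$, $h(\cdot+\tau)-h(\tau)$ preserves the hypothesis, so I may assume $b=[y,0]$ and $a=[x,\sigma]$ with $\sigma\in[0,1)$; then $\phi_t(b)\in\Sigma$ exactly at $t\in\Z$, at $f^n(y)$. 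Look at the continuous function $d(t)=\sigma+h(t)-t$: by (ii) its value lies within $\rho(\delta)$ of $\Z$ at each integer $t$, and by (i) applied at all $t$ plus continuity, $d$ stays within $\rho(\delta)$ of a single fixed integer on all of $\R$. Hence $h(n+1)-h(n)-1=d(n+1)-d(n)$ has modulus $<2\rho(\delta)<\tfrac12$, so $c:=\mathrm{round}(\sigma+h(n))-n=\mathrm{round}(d(n))$ is independent of $n$. Now at $t=n$ fact (ii) writes $\phi_{h(n)}(a)=\phi_r([z',0])$ with $[z',0]$ a section crossing of the $a$-orbit; the flow-time bookkeeping $h(n)=(j-\sigma)+r$, $|r|<\rho(\delta)$, identifies $j=\mathrm{round}(\sigma+h(n))=n+c$, so $z'=f^{n+c}(x)$ and $\dist(f^{n+c}(x),f^n(y))<\rho(\delta)<\delta_0$. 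Since this holds for all $n$, separating-ness of $f$ gives $y=f^k(f^c(x))$ for some $k$, so $b=[y,0]=\phi_{k+c-\sigma}(a)\in\phi_\R(a)$; unwinding the reduction, $b_0\in\phi_\R(a_0)$.

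The heart of the matter — and the only place the argument could go wrong — is the claim that $d$ stays near one integer, equivalently that $h(n+1)-h(n)\approx 1$ for all $n$: the reparametrization is not allowed to drift. Over a parameter interval on which the $b$-orbit winds once around a layer (its height mod $1$ gaining exactly $1$), any genuine $\delta$-shadow must wind once as well. This is what collapses the a priori arbitrary order-preserving matching of the two orbits' section crossings to the rigid translation $n\mapsto n+c$, which in turn lets the separating constant $\delta_0$ of $f$ be used without loss. This would fail if closeness were imposed only at section crossings; it is the continuity of $t\mapsto d(t)$, fed by closeness at every $t$, that forbids the drift.
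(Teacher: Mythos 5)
Your proof is correct, and it follows exactly the route the paper intends: the paper's proof is just the remark that the argument is ``similar to the proof of Theorem 6 in \cite{BW}'', i.e.\ the classical cross-section bookkeeping for suspensions, which is precisely what you carry out in detail (reduction to the unit roof, closeness of heights forcing the reparametrization not to drift, and matching of section crossings up to a fixed shift so the separating constant of $f$ applies).
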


\begin{proof}
 Is similar to the proof of Theorem 6 in \cite{BW}.
\end{proof}

Now we give an example showing that separating homeomorphisms may not be expansive.

\begin{ejp}
\label{sepnoexp}
Let $X$ be the subset of the sphere $\R^2\cup\{\infty\}$ given by
$$
  X=\{\infty\}\cup\{(n,0):n\in\Z\}\cup\{(n,\pm1/m): n\in\Z, m\in \Z^+,|n|\leq m\}.
$$ 
Define $f\colon X\to X$ as
$f(\infty)=\infty$, $f(n,0)=(n+1,0)$, $f(n,\pm1/m)=(n+1,\pm1/m)$
if $n<m$ and $f(m,\pm1/m)=(-m,\mp1/m)$. It is easy to see that $f$ is a homeomorphism. 
It is not expansive because the points $(0,1/m)$ and $(0,-1/m)$ contradicts expansiveness
for arbitrary small expansive constants. 
It is a separating homeomorphism because these are the only points contradicting expansiveness and they are in the same orbit.
Therefore, the suspension of this example is not geometric expansive but it is geometric separating. 
\end{ejp}

\begin{rmk}
We also have that the suspension flow in the previous example 
is strong separating but it is not strong kinematic expansive.
\end{rmk}

\subsection{Summary of counterexamples}

We have defined six variations of expansive and separating flows on compact metric spaces.
In the following table we recall the counterexamples in the hierarchy:

\begin{table}[h]
\[
\begin{array}{ccc}
\hbox{\fbox{Geometric Expansive}}        & \begin{array}{c}\nLeftarrow\\ \hbox{Example }\ref{sepnoexp} \end{array} & \hbox{\fbox{Geometric Separating}} \\
\nUparrow \hbox{Examples }\ref{torussing}\hbox{ and } \ref{horociclico} && \nUparrow \hbox{Examples }\ref{torussing}\hbox{ and }\ref{horociclico}\\
\hbox{\fbox{Strong Kinematic Expansive}} & \begin{array}{c}\nLeftarrow\\ \hbox{Example }\ref{sepnoexp} \end{array} & \hbox{\fbox{Strong Separating}}\\
\nUparrow \hbox{Example }\ref{annulus} && \nUparrow \hbox{Example }\ref{ejpDH}\\
\hbox{\fbox{Kinematic expansive}}        & \begin{array}{c}\nLeftarrow\\ \hbox{Example }\ref{exmoebius} \end{array} & \hbox{\fbox{Separating}}\\
\end{array}
\]
\caption{Diagram of counterexamples}
\label{HieGen}
\end{table}
As we can see in the diagram, the six definitions are different in 
the general context of continuous flows on compact metric spaces.

\section{Hierarchy of expansiveness on surfaces}
\label{HieOnSurf}
In this section we will show that the hierarchy of expansive flows presented in Table \ref{tablaExp} is simpler, 
see Table \ref{HieOnSurf},
if we assume that the phase space is a compact surface. 
\begin{table}[h]
\[
\begin{array}{c}
\fbox{Geometric Expansive 
$\Leftrightarrow$  
Geometric Separating} \\
\Downarrow \\
\fbox{Strong Kinematic Expansive 
$\Leftrightarrow$  
Strong Separating}\\
\Downarrow \\
\fbox{Kinematic expansive}\\
\Downarrow \\
\fbox{Separating}\\
\end{array}
\]
\caption{Hierarchy of expansive flows of compact surfaces}
\label{HierOnSurf}
\end{table}
The first equivalence in Table \ref{HierOnSurf} is given in Theorem \ref{teogsepsup} and the second one is proved in Theorem \ref{skexpsurf}.
To prove these results we will first study the local behavior near singular points and time changes 
of flows with wandering points.

\subsection{Isolated singular points}

In this section we study the local behavior of singularities of separating flows of surfaces. 
Let $\phi\colon \R\times S\to S$ be a continuous flow on a compact surface $S$.
As mentioned in Remark \ref{rmkSing}, every singular point is $\phi$-isolated if $\phi$ is separating.

 Let us introduce some definitions. 
A regular orbit $\gamma$ is a \emph{separatrix} of $p\in\sing$ if for $x\in\gamma$ it holds that 
$\lim_{t\to+\infty}\phi_t(x)=p$ (\emph{unstable separatrix}) or $\lim_{t\to-\infty}\phi_t(x)=p$ (\emph{stable separatrix}).
A singular point is said to be a 
\emph{(multiple) saddle} if it presents a finite number of separatrices. We say that $p\in\sing$ is a 
$n$-\emph{saddle} if $p$ is a multiple saddle of index $n$ (i.e. if it has $n$-1 stable separatrices).

Recall that a singular point $p$ is (Lyapunov) \emph{stable} if for all $\epsilon>0$ there is $\delta>0$ such that if $\dist(x,p)<\delta$ then $\dist(\phi_t(x),p)<\epsilon$ for all $t\geq 0$. 
We say that $p$ is \emph{asymptotically stable} if it is stable and there is $\delta_0>0$ such that 
if $\dist(x,p)<\delta_0$ then $\phi_t(x)\to p$ as $t\to+\infty$. 
If $p$ is asymptotically stable we say that $p$ is a \emph{sink}. 
We say that $p$ is a \emph{source} if it is a sink for $\phi^{-1}$ defined as $\phi^{-1}_t=\phi_{-t}$.

Let us recall from \cite{Hartman} some well known facts and notations relative to the Poincar\'e-Bendixon Theory. 
Let $p\in \sing$ be a $\phi$-isolated singular point. 
Consider a Jordan curve $C$ bounding a neighborhood $U$ of $p$ such that 
if $\phi_\R(x)\subset \clos U$ then $x=p$. 
If for some $y\in C$ it holds that $\phi_{\R^+}(y)\subset U$ then we say that 
$\phi_{\R^+}(y)$ is a \emph{stable separatrix arc} (or a \emph{base solution} in the terminology of \cite{Hartman}). 
Since $p$ is $\phi$-isolated, we have that $\lim_{t\to\infty}\phi_t(y)=p$.
In the same conditions, if $\phi_{\R^-}(y)\subset U$ then this orbit segment is called \emph{unstable separatrix arc}. 

Suppose that $y_1,y_2\in C$ determine two separatrix arcs. An open subset $S$ bounded by $p$, 
the separatrix arcs of $y_1$ and $y_2$, 
and an arc in $C$ from $y_1$ to $y_2$ is called a \emph{sector}.
Notice that each pair of separatrix arcs determines two sectors. 

A sector $\sigma$ is \emph{hyperbolic} if contains no separatrix arc. 
A sector $\sigma$ determined by two stable (or two unstable) separatrix arcs is called \emph{parabolic} 
if it contains no unstable (or stable) separatrix arc. 
With reference to \cite{Hartman}, elliptic sectors needs not to be consider because $p$ is $\phi$-isolated. 
The number of hyperbolic sectors is finite by Lemma 8.2 in \cite{Hartman}. 
\begin{prop}
\label{propSectores}
Assume that $U$ is an isolating neighborhood of $p\in\sing$ bounded y a Jordan curve $C$ as above.
If the closures of all the hyperbolic sectors 
are deleted from $U$ then the residual set is either: 
\begin{enumerate}
 \item empty and $p$ is a multiple saddle,
 \item $U$ and $p$ is a sink or a source or 
 \item the union of a finite number of pairwise disjoint parabolic sectors.
\end{enumerate}
\end{prop}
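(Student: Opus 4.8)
The plan is to read off the trichotomy from the classical Poincar\'e--Bendixson theory of an isolated plane singularity, as developed in \cite{Hartman}, with the hypothesis that $p$ is $\phi$-isolated (equivalently, the defining property of the isolating neighbourhood $U$) playing the role of eliminating all recurrence inside $\clos U$. First I would recall why \emph{elliptic sectors do not occur}: an elliptic sector contains orbits $\phi_\R(x)$ homoclinic to $p$, i.e. with $\phi_t(x)\to p$ as $t\to\pm\infty$, and these loops shrink to $p$, so for $x$ close enough to $p$ one obtains a whole orbit $\phi_\R(x)\subset\clos U$ with $x\neq p$, contradicting the choice of $U$. Beyond this, the substantial input I would import from \cite{Hartman} is the sector structure itself: there are finitely many hyperbolic sectors (Lemma 8.2 of \cite{Hartman}), an isolated singular point with \emph{no} separatrix arc is a node or a focus (hence a sink or a source, centres being excluded by the absence of periodic orbits in $\clos U$), and, in general, once the closures of the hyperbolic sectors are removed from $U$ the residual set consists of regions all of whose orbits converge to $p$ in one and the same time direction.

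The combinatorial core I would isolate as a \emph{transition lemma}, proved from Poincar\'e--Bendixson together with the absence in $\clos U$ of periodic orbits and of polygonal graphics (all forbidden by the choice of $U$): call $y\in C$ \emph{stable} if $\phi_{\R^+}(y)\subset U$ (whence $\phi_t(y)\to p$), \emph{unstable} if $\phi_{\R^-}(y)\subset U$, and \emph{transversal} otherwise (whence the orbit of $y$ leaves $U$ in finite forward time and in finite backward time). Travelling along an arc of $C$ from a stable point to an unstable point, the last stable point bounds a stable separatrix arc, the first unstable point an unstable separatrix arc, and the orbits through the open arc of transversal points between them sweep out a region crossing $U$ that meets no separatrix arc in its interior --- that is, a hyperbolic sector. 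In particular: a stable and an unstable point of $C$ can coexist only with a hyperbolic sector between them; and $C$ cannot consist only of transversal points, since then $p$ would have no separatrix and, not being a node or a focus either, this is impossible.

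With this, I would argue by cases on the residual set $R=U\setminus\bigcup_{j}\clos(\sigma_j)$, where $\sigma_1,\dots,\sigma_k$ are the hyperbolic sectors. If $R=\varnothing$, the finitely many $\clos(\sigma_j)$ cover $\clos U$, their finitely many boundary separatrix arcs exhaust the separatrices of $p$, and $p$ is a multiple saddle (case 1). If $R=U$, then $k=0$; by the transition lemma $C$ cannot contain both a stable and an unstable point, and cannot be all transversal, so every orbit in $U$ converges to $p$ in one fixed time direction --- in the stable case $p$ is a sink (attraction is immediate, and Lyapunov stability follows since its failure would yield, via Poincar\'e--Bendixson, a periodic orbit or a graphic in $\clos U$), in the unstable case a source (case 2). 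Finally, if $R$ is nonempty and proper, then $k\geq 1$; in fact $k\geq 2$, for with a single hyperbolic sector its complement would be one region bounded by one stable and one unstable separatrix arc, which the transition lemma applied inside it forbids (its orbits all converge the same way, forcing the two bounding arcs to be of the same type). Each component $V$ of $R$ is open, bounded by arcs of $C$ and by boundary separatrix arcs of hyperbolic sectors, and contains neither a hyperbolic nor an elliptic subsector, hence no transversal orbit; by the transition lemma inside $V$ all orbits of $V$ converge to $p$ in one fixed direction, its two bounding separatrix arcs are then of the matching type and no opposite-type arc lies inside, so $V$ is a parabolic sector. The components are pairwise disjoint, each is flanked by hyperbolic sectors, and two adjacent parabolic regions of the same type form one component, so their number is finite (case 3).

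The genuine difficulty is the imported structure theory of \cite{Hartman} --- that the residual regions really do have the parabolic structure (uniform convergence of orbits to $p$, separatrix arcs as boundary) and that an isolated singularity without separatrices is a node or a focus; these I take for granted. Within the present argument the delicate points are the transition lemma itself, in particular that the transversal orbits between a stable and an unstable boundary point form a genuine \emph{sector} (i.e. really are bounded by separatrix arcs of $p$), the elimination of the $k=1$ configuration, and the observation --- needed so that the finiteness claims are correctly stated --- that a single parabolic sector may contain infinitely many separatrix arcs of its own type, so that ``multiple saddle'' in case 1 refers to finiteness of the separatrix \emph{set} rather than of hyperbolic-sector boundaries.
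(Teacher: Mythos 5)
Your proposal is correct in substance, and it ends up resting on the same source the paper does: the paper's entire proof of this proposition is the citation ``See Lemma 8.3 of \cite{Hartman}'', the statement being essentially that lemma with elliptic sectors excluded because $p$ is $\phi$-isolated (a point the paper makes just before the proposition, and which you reprove correctly via the shrinking homoclinic loops). What you do differently is to wrap the citation in an explicit combinatorial case analysis --- the transition lemma on $C$ (stable/unstable/transversal boundary points, a hyperbolic sector between a stable and an unstable point), the trichotomy on the residual set, the $k\geq 2$ remark, and the finiteness count of parabolic components --- which has the virtue of showing exactly where $\phi$-isolation and the absence of periodic orbits and graphics in $\clos U$ enter. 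The caveat is that your heaviest imported fact, namely that the regions left after deleting the hyperbolic sectors are swept by orbits converging to $p$ in a single time direction with separatrix arcs as boundary, is itself the core of Hartman's Lemma 8.3; so your reconstruction is an expansion of the cited argument rather than an independent proof, and the added case analysis (in particular the $k\geq2$ observation, which the statement does not need) is organizational rather than essential. As long as you are content to quote Hartman for the sector structure --- as the paper is --- your write-up is a faithful, somewhat more self-contained version of the same proof.
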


\begin{proof}
See Lemma 8.3 of \cite{Hartman}.
\end{proof}

In Figure \ref{figSectores} the three possible cases of Proposition \ref{propSectores} are illustrated.

\begin{figure}[htbp]
\begin{center}
\includegraphics{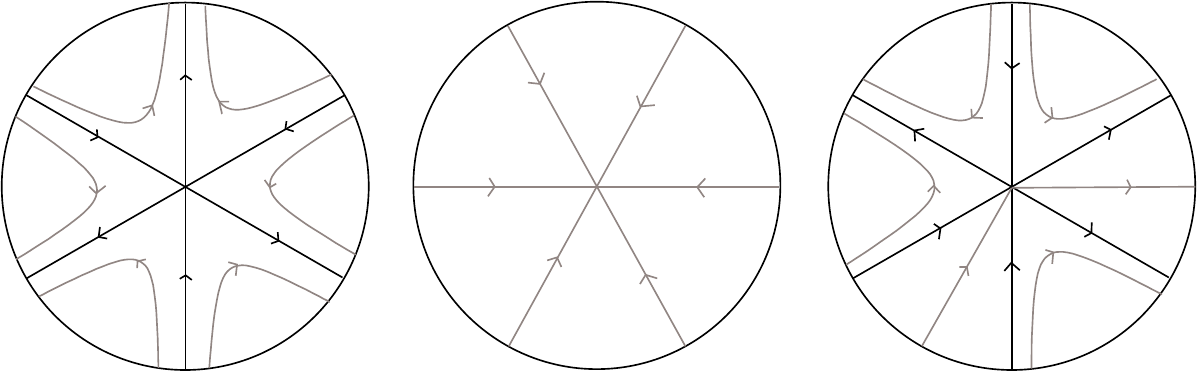}
    \caption{Examples of isolated singularities. 
    Left: a multiple saddle. 
    Center: a sink. 
    Right: a combination of hyperbolic and parabolic sectors.}
   \label{figSectores}
\end{center}
\end{figure}

\begin{df}
 Let $R$ be an embedded disc in $S$ and define a rectangle $K=[-1,1]\times[0,1]\subset\R^2$. We say that:
 \begin{enumerate}
  \item $R$ is a \emph{regular flow box} if $\phi$ restricted to $R$ is topologically equivalent with the constant vector field 
  $X(x,y)=(1,0)$ restricted to $K$, 
  \item $R$ is a \emph{parabolic flow box} if $\phi$ restricted to $R$ is topologically equivalent with 
  $X(x,y)=\pm(x,y)$ restricted to $K$, 
  \item $R$ is a \emph{hyperbolic flow box} if $\phi$ restricted to $R$ is topologically equivalent with 
  $X(x,y)=(x^2+y^2,0)$ restricted to $K$.
 \end{enumerate}
 In the last two cases we say that $R$ is a \emph{singular flow box}.
\end{df}

\begin{prop}\label{cubrimiento} If a flow $\phi$ on a compact surfaces $S$ 
presents a finite number of isolated singularities
then $S =\cup_{i=1}^{n}R_i$ where:
\begin{itemize}
\item each $R_i$ is a regular or singular flow box and
\item if $i\neq j$ then $R_i\cap R_j\subset\partial R_i\cap \partial R_j$.
\end{itemize}
\end{prop}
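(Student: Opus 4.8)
The plan is to cover $S$ in two stages: a neighbourhood of each singularity, handled by the sector decomposition of Proposition \ref{propSectores}, and the complementary compact surface with boundary, handled by the flow box (tubular flow) theorem; at both stages one must take care that the boxes overlap only along their boundaries.

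First I would list the singularities $p_1,\dots,p_k$ and, since they are isolated, fix pairwise disjoint isolating neighbourhoods $U_1,\dots,U_k$, each a disc bounded by a Jordan curve $C_j$ as in the preceding discussion, chosen adapted to $\phi$ (transverse to $\phi$ except along finitely many orbit arcs). For each $j$ I would apply Proposition \ref{propSectores}. If $p_j$ is a multiple saddle then $\clos U_j$ is the union of the closures of its finitely many hyperbolic sectors; each such closure is a disc bounded by $p_j$, a stable and an unstable separatrix arc, and an arc of $C_j$, and the local flow identifies it with $X(x,y)=(x^2+y^2,0)$ on $K$, so it is a hyperbolic flow box; consecutive hyperbolic sectors share a separatrix arc lying in both boundaries. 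If $p_j$ is a sink (a source being symmetric), I would pick two distinct $y_1,y_2\in C_j$ and cut $\clos U_j$ along the arcs $\phi_{\R^+}(y_i)\cup\{p_j\}$; this splits the disc into two embedded discs, each identified with $X(x,y)=-(x,y)$ on $K$, meeting along the two cutting arcs. In the remaining case the hyperbolic sectors are treated as above and each residual parabolic sector is directly a parabolic flow box, again meeting its neighbours along separatrix arcs. Thus each $\clos U_j$ is covered by finitely many singular flow boxes whose pairwise intersections lie in their boundaries, and the part of $C_j$ inside any such box lies in that box's boundary.

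Then I would set $S_0=S\setminus\bigcup_j U_j$, a compact surface with boundary $\bigcup_j C_j$ on which $\phi$ is nonsingular. By the flow box theorem every point of $S_0$ has a flow-box neighbourhood, so by compactness finitely many compact transversal arcs $\Sigma_1,\dots,\Sigma_m$ suffice, in the sense that the boxes $\phi_{[-\tau_i,\tau_i]}(\Sigma_i)$ cover $S_0$. I would then consider the finite graph $G\subset S_0$ formed by the $\Sigma_i$, their translates $\phi_{\pm\tau_i}(\Sigma_i)$, the orbit arcs inside these boxes through the endpoints of the $\Sigma_i$, and the curves $C_j$; after subdividing the cross-sections (equivalently, adjoining finitely many further orbit arcs to $G$) one arranges that each connected component of $S_0\setminus G$ is an embedded open disc on which $\phi$ is a product. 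Its closure is then a regular flow box, two distinct such closures meet only in $G$ and hence only in their boundaries, and a regular box can meet a singular box only along the curves $C_j$, again inside boundaries. The finitely many flow boxes produced in the two stages cover $S$ and have the required property.

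The main obstacle will be this last refinement: a component of $S_0\setminus G$ need not be a flow box if it presents several incoming or outgoing transversal sides, and ruling this out forces one to subdivide the $\Sigma_i$ along the orbit arcs issuing from the previously introduced subdivision points as well as from the endpoints of all the $\Sigma_i$. Since each subdivision adds only finitely many orbit arcs and the total transversal length stays bounded, the process terminates in a genuine rectangle decomposition of $S_0$ subordinate to $\phi$; a secondary, routine (Poincar\'e--Bendixson type) point is the local identification of each sector with its flow-box model, which is where the adapted choice of the curves $C_j$ enters.
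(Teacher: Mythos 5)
Your construction is essentially sound, but it follows a genuinely different route from the paper: the paper's proof is a one-line citation, deducing the covering from Proposition 4.3 of \cite{Gutierrez} together with Proposition \ref{propSectores}, i.e.\ it outsources to Gutierrez exactly the rectangle decomposition of the nonsingular part that you build by hand, and to Hartman (via Proposition \ref{propSectores}) the local picture at the singularities. What your approach buys is a self-contained argument that makes visible where each hypothesis enters (isolatedness of singularities for the sector decomposition, compactness for the finite cover by flow boxes); what the citation buys is that it silently absorbs the two technical points you flag: the orbit-preserving identification of hyperbolic, parabolic and sink sectors with the model boxes (Hartman's local theory, nontrivial for merely continuous flows), and the combinatorics of cutting the complement of the singular neighbourhoods into product rectangles.

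One step of your sketch should be tightened, since as stated it is not yet an argument: the claim that the subdivision process terminates because ``the total transversal length stays bounded.'' If you subdivide the sections $\Sigma_i$ globally and then demand closure of the graph $G$ under adjoining orbit arcs through all new subdivision points, the process need not stabilize (for a flow with dense orbits each new point spawns arcs meeting other sections in new points). The standard fix is to not iterate globally: choose the sections pairwise with finite intersections and with section time larger than twice every $\tau_i$, so that each component $D$ of $S_0\setminus G$ is an open disc inside some box $B_i$ whose boundary is a finite alternation of orbit arcs and transversal arcs; then cut $D$ only along the finitely many orbit chords of $D$ through its own corners. Each resulting piece is a quadrilateral with two orbit sides and two transversal sides, hence a regular flow box, and since these cuts stay inside $D$ they create no new corners in other components, so the procedure terminates after one pass over the finitely many components. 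With that adjustment (and the boundary components of $S$ and the curves $C_j$ handled, as you indicate, by taking them adapted and including them in $G$), your proof is complete and recovers what the paper imports from \cite{Gutierrez} and \cite{Hartman}.
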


\begin{proof}
 It follows by Proposition 4.3 of \cite{Gutierrez} and Proposition \ref{propSectores} above.
\end{proof}

\subsection{Time changes and wandering points}

Let $\phi$ be a continuous flow on a compact surface $S$.

\begin{thm}
\label{teoSepWan}
 If $\phi$ is a continuous flow on a compact surface $S$ 
 and $\phi$ has 
 wandering points then 
 there is a time change of $\phi$ that is not separating. 
\end{thm}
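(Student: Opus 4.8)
The plan is to use the wandering point to build a small flow box that the orbit visits for arbitrarily long times, and then slow the flow down dramatically inside that box so that a nearby orbit (on a different global orbit) stays uniformly close to the given one for all time, destroying separation.

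First I would pick a wandering point $q$. By definition there is an open neighborhood $V$ of $q$ and a time $t_0>0$ such that $\phi_t(V)\cap V=\emptyset$ for all $|t|\geq t_0$; in particular $q$ is not periodic and not a singularity, so I may shrink $V$ to a regular flow box $R$ — a small embedded rectangle transverse to the flow, say $R\cong(-1,1)\times(-1,1)$ with $\phi$ near-parallel, $q$ corresponding to the origin, and each orbit crossing $R$ in a time shorter than some $\tau>0$. Because $q$ is wandering, the forward orbit of $q$ enters $R$ only finitely many... more carefully: I want to choose $R$ so small that the orbit through $q$ meets $R$ in a single short segment — since $q$ is wandering, there is a neighborhood of $q$ that the full orbit $\phi_{\mathbb R}(q)$ intersects only in one connected arc through $q$. (If $q$ returns infinitely often to every neighborhood it would be non-wandering; finitely many returns can be pushed outside by shrinking.) Fix two distinct points $x,y\in R$ on the same local cross-section, both very close to $q$, lying on distinct orbits — their orbits are distinct global orbits because a sufficiently small $R$ meets $\phi_{\mathbb R}(q)$, and hence each nearby orbit, in only one crossing arc, so $y\in\phi_{\mathbb R}(x)$ is impossible once $x\neq y$ are on different cross-section points infinitesimally near $q$.

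Next comes the time change. Choose a continuous positive reparametrization factor $\psi\colon S\to(0,\infty)$ that equals $1$ outside a slightly larger flow box $R'\supset\overline R$ and is extremely small on a sub-box $R''$ through $q$; concretely $\psi$ is built from a bump function supported in $R'$. Let $\phi^\psi$ be the flow whose velocity field is $\psi$ times that of $\phi$ (a genuine time change, fixing each orbit). Under $\phi^\psi$, the transit time through $R''$ is multiplied by roughly $1/\inf_{R''}\psi$, which I make as large as I please, while outside $R'$ the two flows agree. Now track $x$ and $y$ under $\phi^\psi$: before entering $R''$ and after leaving $R'$ they move exactly as under $\phi$, and since $\phi_{\mathbb R}(q)$ meets $R'$ in one short arc, their orbits leave the box region once and for all; inside the box both orbits crawl, staying within $\operatorname{diam}(R')$ of each other (indeed of $q$) for an arbitrarily long time. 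Since the pieces of the orbit outside $R'$ are compact and the two orbits are close there too (taking $x,y$ close to $q$ and using continuous dependence on initial conditions on the finite escape-time pieces), I can arrange $\operatorname{dist}(\phi^\psi_t(x),\phi^\psi_t(y))<\delta$ for all $t\in\mathbb R$, for any prescribed $\delta>0$, yet $y\notin\phi^\psi_{\mathbb R}(x)=\phi_{\mathbb R}(x)$. Hence $\phi^\psi$ is not separating.

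\textbf{Main obstacle.} The delicate point is not the slowdown inside the box — that is the easy half — but controlling the two orbits \emph{outside} the box for \emph{all} time with a single small $\delta$ that works uniformly in the reparametrization. The worry is the other orbit $\phi_{\mathbb R}(y)$ re-entering the slowed region $R'$ out of sync with $x$'s orbit, or the forward/backward orbits of $x$ and $y$ diverging on the (finite-length but possibly dynamically complicated) excursion outside $R'$. The wandering hypothesis is exactly what rules out the first issue: one can shrink $R'$ so that $\phi_{\mathbb R}(q)\cap R'$ is a single arc, hence neither orbit ever comes back. For the second, I would use uniform continuity of the original flow $\phi$ on the compact time-intervals corresponding to the (bounded) escape portions, choosing $x,y$ close enough to $q$ afterwards; this is the step that needs to be written carefully, coordinating the choice of $x,y$ with the reparametrization profile $\psi$, but it is a standard flow-box continuity argument rather than a conceptual difficulty.
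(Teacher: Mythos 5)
Your construction has a genuine gap, and it is exactly at the point you flag as the ``main obstacle'': the time spent outside the slowed box is not a ``finite escape-time piece''. By the wandering property you arrange that the orbits of $x$ and $y$ meet $R'$ in a single arc, so after they exit (and before they enter) they never return; hence for the time-changed flow the set of times $t$ during which the pair is inside $R'$ is a single bounded interval, while the complementary forward and backward rays are \emph{infinite}. On those rays your time change is the identity, so the pair evolves under the original flow $\phi$ from two distinct nearby points lying on distinct orbits, and continuous dependence on initial conditions only gives closeness on compact time intervals --- it gives no bound valid for all $t\to\pm\infty$. If $\phi$ itself separates orbits at scale $\delta$ (the only interesting case, since otherwise the trivial time change already proves the theorem), then after the long-but-finite transit through the bump region the two points are eventually moved $\delta$ apart at some synchronized time, so your $\phi^\psi$ is still separating. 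A bounded slowdown supported in one flow box only postpones separation; it cannot prevent it, and making $\inf\psi$ smaller does not change this, since the transit time remains finite and the tails are untouched.

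This is why the paper's proof is global rather than local: it covers the whole surface by finitely many regular and singular flow boxes (Proposition \ref{cubrimiento}), takes a transversal arc $l$ with $\phi_t(l)\cap l=\emptyset$ for $t\neq 0$ (the wandering hypothesis enters only to guarantee that distinct points of $l$ lie on distinct global orbits), and then builds one time change $\psi$ box by box so that, in \emph{every} box the orbit of $l_2$ crosses and at \emph{every} time, the chosen points remain on a common transversal (vertical) fiber of that box, with a special nested construction of shrinking hyperbolic boxes near the saddle-type singularities. Staying on a common fiber for all $t\in\R$ is what yields the uniform all-time bound on $\dist(\psi_t(x),\psi_t(y))$ that your single-box slowdown cannot provide. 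To repair your argument you would have to control the relative position of the two orbits along their entire (infinite) excursions outside $R'$, which amounts to redoing this global synchronization; the local bump-function idea by itself does not suffice.
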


\begin{proof}
If $\phi$ has a non-isolated singular point then $\phi$ is not separating. Therefore, we will assume that 
all the singularities are isolated.

 Let $l\subset S$ be a compact arc transversal to the flow such that $\phi_t(l)\cap l=\emptyset$ for all $t\neq 0$. 
 Let $L=\phi_\R(l)$. 
 Consider a covering of boxes $R_1,\dots,R_n$, $S=\cup_{i=1}^nR_i$, as in Proposition \ref{cubrimiento}. 
 Divide $l$ with two interior points in three sub-arcs $l=l_1\cup l_2\cup l_3$ in such a way that 
 $\phi_\R(l_2)$ intersects each $\partial R_i$ only at the transversal part.
 We will show that there is a time change $\psi$ of $\phi$ such that 
 for all $\delta>0$ there are $x,y\in l_2$, $x\neq y$, such that $\dist(\psi_t(x),\psi_t(y))<\delta$ 
 for all $t\in\R$.
 
 Fix a box $R_i$ such that $\phi_\R(l_2)\cap R_i\neq\emptyset$. Assume first that $R_i$ is a regular flow box. 
 The boundary of $R_i$ is the union of two transversal arcs $a$ and $b$ and two orbit segments. 
 Suppose that the flow enters to the box through $a$. 
 Given two points $x,y\in a$ the sub-arc of $a$ with extreme points $x,y$ will be denoted by $[x,y]$. 
 Call $x_1$ and $x_2$ the extreme points of $a$ as shown in Figure \ref{figCajaRegular}. 
\begin{figure}[htbp]
\begin{center}
\includegraphics{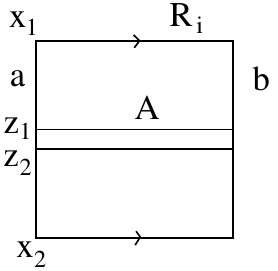}
    \caption{}
   \label{figCajaRegular}
\end{center}
\end{figure}

 Take
 $z_1,z_2\in a$ such that 
 $[z_1,z_2]\cap \phi_\R(l_2)=\emptyset$. 
Since $R_i$ is a regular flow box there is a homeomorphism $h\colon R_i \to K=[-1,1]\times [0,1]$ taking orbit segments in $R_i$ into 
horizontal segments in $K$. 
For each $p\in R_i$ denote by $\gamma(p)$ the preimage by $h$ of the vertical segment through $h(p)$. 
Each $\gamma(p)$ is a compact arc transversal to the flow.
Consider a time change $\psi$ 
such that:
\begin{enumerate}
 \item if $x\in [x_1,z_1]$ 
then $\psi_t(x)\in \gamma(\phi_t(x_1))$ for all $t\in[0,T_1]$ where $\phi_{T_1}(x_1)\in b$ 
and $\phi_{[0,T_1]}(x_1)\subset R_i$, 
\item if $x\in [z_2,x_2]$ 
then $\psi_t(x)\in \gamma(\phi_t(x_2))$ for all $t\in[0,T_2]$ where $\phi_{T_2}(x_2)\in b$ 
and $\phi_{[0,T_2]}(x_2)\subset R_i$.  
\end{enumerate}

Now consider a hyperbolic box $R_i$. 
Again denote by $a=[x_1,x_2]\subset \partial R_i$ the transversal part of the boundary of $R_i$ where the flow enters to the box. 
Consider $u_i,v_i\in a$, for $i\in \Z^+$, such that $u_1<v_1<u_2<v_2<\dots$ and 
$[u_i,v_i]\cap \phi_\R(l_2)=\emptyset$ for all $i=1,2,3,\dots$. 
Denote by $p\in \partial R_i$ the singular point in the boundary of $R_i$. 
Again, with a homeomorphism $h\colon R_i\to K$ we have a transversal (vertical) foliation on $R_i\setminus\{p\}$. 
Inside $R_i$ consider three flow boxes $A_1,B_1,C_1$ bounded by orbit segments and vertical arcs as in Figure \ref{cajasDivididas}. 
\begin{figure}[htbp]
\begin{center}
\includegraphics{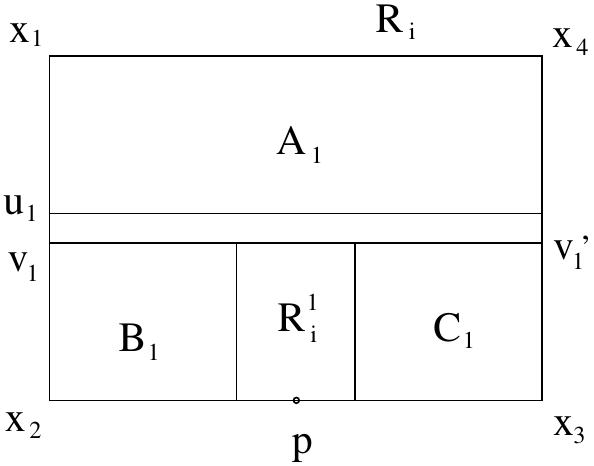}
    \caption{}
   \label{cajasDivididas}
\end{center}
\end{figure}
Also consider the hyperbolic flow box $R_i^1\subset R_i$ as in the figure. 
Define 
\[
 \begin{array}{l}
T_a=\sup\{t>0:\phi_{[0,t]}(x_1)\subset R_i\},\\
T_b=\sup\{t>0:\phi_{[0,t]}(x_2)\subset B_1\},\\
T_c=\sup\{-t>0:\phi_{[t,0]}(x_3)\subset C_1\},
 \end{array}
\]
where $x_3$ is the vertex of the box $R_i$ shown in Figure \ref{cajasDivididas}.
Consider the time change $\psi$ satisfying: 
\begin{enumerate}
 \item if $x\in [x_1,u_1]$ 
then $\psi_t(x)\in \gamma(\phi_t(x_1))$ for all $t\in[0,T_a]$, 
\item if $x\in [v_1,x_2]$ 
then $\psi_t(x)\in \gamma(\phi_t(x_2))$ for all $t\in[0,T_b]$,  
\item if $x\in [v_1',x_3]$ 
then $\psi_{-t}(x)\in \gamma(\phi_{-t}(x_3))$ for all $t\in [0,T_c]$.
\end{enumerate}

Inside $R_i^1$ consider a similar subdivision considering the orbit segments of $u_2,v_2$ as in Figure \ref{cajasDivididas2}.
\begin{figure}[htbp]
\begin{center}
\includegraphics{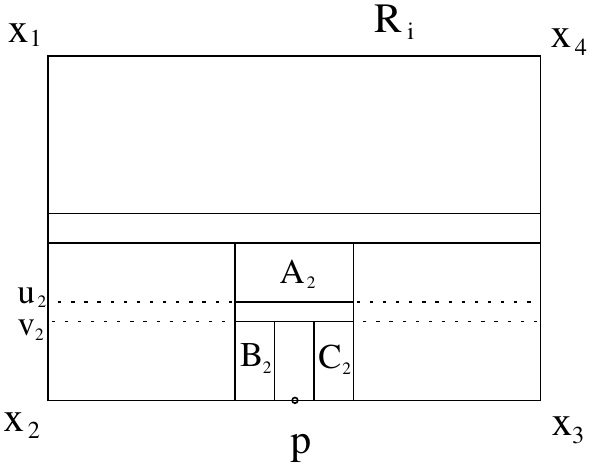}
    \caption{}
   \label{cajasDivididas2}
\end{center}
\end{figure}
Inductively we have a sequence of regular boxes $A_k,B_k,R_k$ and hyperbolic boxes $R^i_k$. 
On each $R^i_k$ assume that $\psi$ satisfies the corresponding conditions as in $R^i_1$.
Assume that $\diam(R_i^k)\to 0$ as $k\to\infty$.

On parabolic boxes, assume that $\psi$ coincides with $\phi$.

In this way we obtain a (global) flow $\psi$ that is a time change of $\phi$ and $\psi$ 
is not separating because on each box $R_i\cap\phi_\R(l_2)$ the flow $\psi$ preserves the vertical foliation of the box.
\end{proof}

\subsection{Geometric separating and geometric expansive flows on surfaces}
\label{secGsepSurf}

Let us recall that in \cite{Artigue} (see Theorem 6.7) it is proved that 
a flow on a compact surface $S$ that is not a torus, is geometric expansive
\footnote{Notice that in cited paper \emph{expansive} means \emph{geometric expansive} in the present terminology.}
if and only if 
the set of singular points is finite
and there are neither 
wandering points nor periodic orbits.
We do not consider singular points as periodic orbits.

\begin{lem}
\label{lemaPer}
 If $\phi$ is a strong separating flow on a compact surface then $\phi$ has no periodic orbits.
\end{lem}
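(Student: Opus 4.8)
The plan is to argue by contradiction: suppose $\phi$ is strong separating but has a periodic orbit $\gamma$. The key idea, already used in Example \ref{annulus}, is that the presence of a closed orbit lets us build a time change of $\phi$ whose angular velocity along $\gamma$ is constant, which then allows nearby orbits to stay close forever — contradicting that \emph{this particular time change} is separating. First I would fix a periodic point $x_0\in\gamma$ with period $\tau>0$ and choose a small transversal arc $l$ through $x_0$; by the flow box theorem (Proposition \ref{cubrimiento} gives the local structure, but near a regular periodic point we only need a regular flow box) there is a flow box neighborhood of $\gamma$ in which $\phi$ looks like a product, i.e. a first-return map $P$ to $l$ is defined on a subarc containing $x_0$ and fixes $x_0$. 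Points on $\gamma$ itself return with time exactly $\tau$, but nearby points may have return time $\tau(x)\neq\tau$, and it is precisely this variation of return times that a time change can flatten out.

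Next I would construct the time change $\psi$ explicitly near $\gamma$, mimicking the construction in the proof of Theorem \ref{teoSepWan}: using the regular flow box around $\gamma$, with transversal foliation $\{\gamma(p)\}$, define $\psi$ so that for every point $x$ on the transversal $l$ the orbit of $x$ under $\psi$ crosses the transversal fibers in lockstep with the orbit of $x_0$ — that is, $\psi_t(x)\in\gamma(\phi_t(x_0))$ for all $t$ in one period. This forces the $\psi$-return time of every point of $l$ near $x_0$ to equal $\tau$, so all these orbits become periodic with the \emph{same} period under $\psi$. Away from a neighborhood of $\gamma$ one extends $\psi$ to agree with $\phi$ (or interpolates on a collar), using a bump function on the speed factor so that $\psi$ is a genuine global time change; this is routine and I would not spell out the interpolation. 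I should also address the case where $\gamma$ is a periodic orbit that is not isolated among periodic orbits (an annulus of periodic orbits): there the same flow box construction applies verbatim to the whole annulus and the conclusion is the same.

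With $\psi$ so defined, the contradiction is immediate: take two distinct points $x,y\in l$ close to $x_0$ lying on different $\phi$-orbits (hence different $\psi$-orbits, since a time change preserves orbits). By construction both $\psi_\R(x)$ and $\psi_\R(y)$ are periodic of period $\tau$ and, at each time $t$, $\psi_t(x)$ and $\psi_t(y)$ lie on the same transversal fiber $\gamma(\phi_t(x_0))$; since the flow box can be taken with arbitrarily small diameter, $\dist(\psi_t(x),\psi_t(y))<\delta$ for all $t\in\R$, for \emph{any} prescribed $\delta>0$. As $x,y$ are in different orbits, this contradicts the definition of separating applied to $\psi$, hence contradicts strong separativeness of $\phi$. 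The main obstacle, and the only place needing care, is making the local time change $\psi$ on the flow-box collar match smoothly (continuously) with $\phi$ outside, so that $\psi$ is a well-defined global continuous flow and a legitimate time change; this is handled by the same cutoff technique as in the proof of Theorem \ref{teoSepWan} and I would reference that construction rather than repeat it.
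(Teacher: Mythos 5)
There is a genuine gap, and it sits exactly where you pass from ``nearby points may have return time $\tau(x)\neq\tau$'' to ``all these orbits become periodic with the same period under $\psi$.'' A time change preserves every orbit as a point set, so if $\gamma$ is a limit cycle --- i.e.\ the first return map $P$ on the transversal $l$ satisfies $P(x)\neq x$ for points $x$ near $x_0$, with orbits spiralling towards (or away from) $\gamma$ --- then no time change whatsoever can turn the orbit of $x$ into a closed curve. In that situation your lockstep condition $\psi_t(x)\in\gamma(\phi_t(x_0))$ can only be maintained while the orbit of $x$ stays inside the tubular neighborhood of $\gamma$, which is a half-line of times: in the opposite time direction the orbits of $x$ and $y$ leave the neighborhood (this is forced once you glue $\psi$ to $\phi$ outside, as you propose) and nothing prevents them from separating there. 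Since the definition of separating requires $\dist(\psi_t(x),\psi_t(y))<\delta$ for \emph{all} $t\in\R$, forward-only closeness yields no contradiction, so the limit-cycle case is not handled. Your argument is correct only in the case you relegate to a remark, namely when $\gamma$ is accumulated by periodic orbits, because only then are the nearby orbits closed and the constant-period time change keeps them $\delta$-close for all time.

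The paper closes exactly this hole by a preliminary reduction: by Theorem \ref{teoSepWan}, a strong separating flow on a surface has no wandering points, and a limit cycle (or any non-fixed point of the return map near $\gamma$) produces wandering points. Hence every point near a periodic orbit is itself periodic, and only then does the accumulation-by-periodic-orbits construction --- essentially your lockstep time change --- apply. To repair your proof you should either invoke Theorem \ref{teoSepWan} first in the same way, or carry out the time-change construction globally along the entire bi-infinite orbits of the spiralling points (including their passages near singularities), which is precisely the nontrivial content of the proof of Theorem \ref{teoSepWan} and far more than the collar interpolation you defer as routine.
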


\begin{proof}
By Theorem \ref{teoSepWan} we have that there are no wandering points. 
Therefore, if $\gamma$ is a periodic orbit, every point close to $\gamma$ has to be periodic 
(this can be easily proved by considering a local cross section through $\gamma$ and its first return map). 
But if a periodic orbit is accumulated by periodic orbits then 
there is a time change of $\phi$ that is not strong separating. 
Therefore, a strong separating flow cannot have periodic orbits.
\end{proof}

\begin{lem}
 \label{toroNoG-exp}
 The torus does not admit geometric separating flows.
\end{lem}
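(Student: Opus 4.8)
Before seeing the author's proof, here is how I would approach the statement ``The torus does not admit geometric separating flows''.

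The strategy is to argue that a geometric separating flow on $\T^2$ would have to be topologically equivalent to a linear irrational flow on $\T^2$ (possibly with finitely many ``fake'' saddles blown into it), and that such flows are not even separating; since geometric separating implies separating and --- because the defining condition only constrains the oriented orbit partition, freely reparametrized along each orbit --- geometric separating is invariant under time changes and under flow equivalence, hence under topological equivalence, this is a contradiction. So I would first record that invariance (a short verification), and note that it suffices to check that the model flows are not separating.

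\emph{Structure of a geometric separating flow on $\T^2$.} Such a flow $\phi$ is in particular strong separating, so by Theorem~\ref{teoSepWan} it has no wandering points, by Lemma~\ref{lemaPer} no periodic orbits, and by Remark~\ref{rmkSing} finitely many isolated singularities. Using Proposition~\ref{propSectores} together with the absence of wandering points I would discard sinks, sources and parabolic sectors: the basin of a sink or source, and the interior of a parabolic sector, consist of wandering points. Hence every singularity is a multiple saddle, and since a multiple saddle has Poincar\'e index $\le 0$ while the indices add up to $\chi(\T^2)=0$, every singularity has index $0$, i.e.\ is a \emph{fake saddle}: two hyperbolic sectors, one stable separatrix $\gamma^s$, one unstable separatrix $\gamma^u$. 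Each fake saddle $p_i$ can then be \emph{blown down}: replacing a small isolating neighbourhood by a regular flow box glues the three orbits $\gamma_i^s,\{p_i\},\gamma_i^u$ into a single regular orbit and yields a continuous flow $\psi$ on $\T^2$ which is nonsingular, still has no periodic orbits and no wandering points, and has the same oriented orbits as $\phi$ outside the finitely many glued orbits.

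\emph{Conclusion via the classical theory on the torus.} For the nonsingular flow $\psi$ I would invoke the structure theory of flows on $\T^2$: a nonsingular continuous flow without periodic orbits admits an essential closed transversal $C$ (a nullhomotopic one would bound a disc carrying a singularity), every orbit returns to $C$ (Poincar\'e--Bendixson applied on the annulus $\T^2\setminus C$), so the first-return map $R\colon C\to C$ is a circle homeomorphism with irrational rotation number; the absence of wandering points forces $R$ to be minimal, hence topologically conjugate to an irrational rotation (Poincar\'e), so $\psi$ is topologically equivalent to the linear irrational flow. That flow is not separating --- two distinct orbits at a small constant Euclidean distance stay at that distance for all time, hence at arbitrarily small Fr\'echet distance --- so, by topological invariance, $\psi$ is not geometric separating: for its separating constant $\delta$ there are $x,y$ in distinct orbits with $\dist_F(\psi_\R(x),\psi_\R(y))<\delta$, and by density of such pairs we may take $x,y$ off the finitely many glued orbits. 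There $\phi$ and $\psi$ share the same orbits, so $\dist_F(\phi_\R(x),\phi_\R(y))<\delta$ with $y\notin\phi_\R(x)$, contradicting that $\phi$ is separating.

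The main obstacle is the classical structure step in the purely continuous category --- producing the closed transversal and the globally defined return map for a merely continuous nonsingular flow on $\T^2$ without periodic orbits --- together with verifying that the blow-down of a fake saddle really does preserve the hypotheses, in particular the absence of wandering points on the glued orbit (one must handle possible saddle connections between fake saddles). Both points are classical in spirit and can be extracted from Poincar\'e--Bendixson theory and the flow-box covering of Proposition~\ref{cubrimiento} (cf.\ \cite{Gutierrez,Hartman}), but they need to be stated with some care in the continuous setting.
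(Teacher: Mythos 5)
Your proposal is correct and follows essentially the same route as the paper: strong separating rules out wandering points and periodic orbits, Proposition \ref{propSectores} plus the index count on the torus forces all singularities to be fake saddles, removing them yields a nonsingular flow that must be an irrational flow, whose parallel-orbit behavior contradicts the separating property. The only difference is cosmetic: where the paper cites Lemma 4.1 of \cite{Artigue} for the irrational-flow step and leaves the final contradiction as ``easy to see,'' you sketch the classical Poincar\'e--Bendixson/Denjoy argument and spell out the Fr\'echet-distance contradiction explicitly.
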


\begin{proof}
Assume by contradiction that $\phi$ is a geometric separating flow on the torus.
We know by Theorem \ref{teoSepWan} and Lemma \ref{lemaPer} that $\phi$ has neither wandering points 
nor periodic orbits. 
Since $\phi$ is separating, we have that the singular points are $\phi$-isolated. 
Applying Proposition \ref{propSectores} and the fact that there are no wandering points 
we have that every singular point is of saddle type, that is because there are neither sources, sinks nor parabolic sectors. 
Since the Euler characteristic of the torus equals zero we have that singular points are 0-saddles (sometimes called \emph{fake saddles}). 
Consider another flow $\psi$ that removes the singularities of $\phi$, i.e., satisfying: 1) $\psi$ has no singular points and 2) 
every orbit of $\phi$ is contained in a orbit of $\psi$.
It is known that under these conditions (see for example Lemma 4.1 in \cite{Artigue}) $\psi$ is an irrational flow, i.e. a suspension of an irrational rotation of the circle.
But now it is easy to see that $\phi$ cannot be geometric separating. 
This contradiction proves the lemma.
\end{proof}

\begin{thm}
\label{teogsepsup}
 A continuous flow on a compact surface is geometric separating if and only if it is geometric expansive.
\end{thm}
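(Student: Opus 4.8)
The plan is to prove both implications, the nontrivial one being that geometric separating implies geometric expansive on a compact surface. One direction is immediate from the hierarchy (Table \ref{tablaExp}): geometric expansive always implies geometric separating. So I would concentrate on the converse.

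First I would dispose of the torus: by Lemma \ref{toroNoG-exp} the torus admits no geometric separating flow, so there is nothing to prove there, and we may assume $S$ is a compact surface that is not a torus. Now suppose $\phi$ is geometric separating. The strategy is to verify the three conditions from the characterization of geometric expansive flows recalled at the start of Subsection \ref{secGsepSurf} (from \cite{Artigue}, Theorem 6.7): (i) the set of singular points is finite, (ii) there are no wandering points, and (iii) there are no periodic orbits. Condition (ii) follows directly from Theorem \ref{teoSepWan}: if $\phi$ had wandering points there would be a time change that is not separating, hence not geometric separating — but geometric separating is invariant under time changes (a time change preserves orbits, and the geometric definition already quantifies over all reparametrizations), contradicting that $\phi$ is geometric separating. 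Condition (i) follows from Remark \ref{rmkSing}, since geometric separating implies separating, and a separating flow has $\phi$-isolated singular points, hence finitely many. Condition (iii): since a geometric separating flow is in particular strong separating (again because the geometric definition subsumes time changes), Lemma \ref{lemaPer} gives that $\phi$ has no periodic orbits.

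Having checked (i), (ii), (iii), the cited characterization yields that $\phi$ is geometric expansive, completing the proof.

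The main obstacle I anticipate is making precise the observation that geometric separating is preserved under time changes and that it implies strong separating — i.e., that the reparametrizations already built into the geometric definition absorb any global time change. The argument is that if $\psi$ is a time change of $\phi$, then each $\psi$-orbit equals the corresponding $\phi$-orbit as a set with the same orientation, so a tracking condition $\dist(\psi_{h(t)}(x),\psi_t(y))<\delta$ for all $t$ can be rewritten, after composing with the reparametrization relating $\psi$ to $\phi$ along the relevant orbits, as a tracking condition of the same form for $\phi$ with a (possibly different) reparametrization in $\reparam$; one must check that the composition of increasing homeomorphisms fixing $0$ is again such, and that the conclusion $y\in\phi_\R(x)=\psi_\R(x)$ is the same for both flows. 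This is routine but deserves a sentence or two so that the invocations of Theorem \ref{teoSepWan} and Lemma \ref{lemaPer} (which are stated for time changes and for strong separating flows respectively) are justified.
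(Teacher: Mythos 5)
Your proposal is correct and follows essentially the same route as the paper: both reduce the nontrivial implication to Theorem \ref{teoSepWan} (no wandering points), Lemma \ref{toroNoG-exp} (excluding the torus), Lemma \ref{lemaPer} (no periodic orbits) together with finiteness of $\sing$, and then invoke Theorem 6.7 of \cite{Artigue}. The point you flag about geometric separating being preserved under time changes (hence implying strong separating) is exactly the hierarchy implication of Table \ref{tablaExp} that the paper uses implicitly, so your extra sentence only makes explicit what the paper takes for granted.
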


\begin{proof}
We only have to prove the direct part because the converse holds on arbitrary compact metric spaces. 
Therefore, consider a geometric separating flow $\phi$. 
By Theorem \ref{teoSepWan} we have that $\phi$ has no wandering points.
By Lemma \ref{toroNoG-exp} we know that $S$ is not the torus and
by Lemma \ref{lemaPer} we have that $\phi$ has no periodic orbits.
Now, recalling that the set of singular points is finite we apply Theorem 6.7 in \cite{Artigue} to conclude that $\phi$ is geometric expansive.
\end{proof}

\subsection{Strong kinematic expansive and strong separating flows on surfaces}
\label{skefos}
In this section we prove the second equivalence of Table \ref{HierOnSurf}.

\begin{thm}\label{skexpsurf}
Let $S$ be a compact surface and let $\phi$ 
be a continuous flow on $S$. 
The following statements are equivalent:
\begin{enumerate}
 \item $\phi$ is strong kinematic expansive,
 \item $\phi$ is strong separating,
 \item the singular points are saddles and the union of their separatrices is dense in $S$.
\end{enumerate}
\end{thm}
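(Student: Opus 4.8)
The plan is to establish the cycle $(1)\Rightarrow(2)\Rightarrow(3)\Rightarrow(1)$. The implication $(1)\Rightarrow(2)$ is immediate from the hierarchy in Table \ref{tablaExp}: every time change of $\phi$ is kinematic expansive, hence separating, so $\phi$ is strong separating. The bulk of the work is in $(2)\Rightarrow(3)$ and $(3)\Rightarrow(1)$.

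For $(2)\Rightarrow(3)$, assume $\phi$ is strong separating. By Remark \ref{rmkSing} the singular set is finite and each singularity is $\phi$-isolated; by Theorem \ref{teoSepWan} there are no wandering points; and by Lemma \ref{lemaPer} there are no periodic orbits. First I would show each singularity is a saddle: apply Proposition \ref{propSectores}, which classifies an isolated singularity into a multiple saddle, a sink/source, or a configuration with parabolic sectors. A sink, a source, or a parabolic sector would produce wandering points (an orbit spiralling into or out of $p$, or filling a parabolic sector, is non-recurrent), contradicting the absence of wandering points. Hence every singularity is a multiple saddle. Next, density of the separatrices: let $\Gamma$ be the closure of the union of all separatrices together with the (finite) singular set, and suppose $\Gamma\neq S$. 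Pick a regular point $x$ in the interior of $S\setminus\Gamma$; its orbit avoids $\Gamma$, so by Proposition \ref{cubrimiento} and the fact that it meets no separatrix, a piece of its orbit lies in a union of regular flow boxes forming an open flow-invariant-up-to-boundary region disjoint from all singular flow boxes. Inside such an open region — essentially an open surface carrying a nonsingular flow with no periodic orbits and no wandering points — one can, as in the proof of Theorem \ref{teoSepWan}, build a time change preserving a transversal foliation on a flow box and thereby violate separating. (The essential point is that away from the separatrices the flow "looks like" a product, and Theorem \ref{teoSepWan}'s construction applies verbatim to the regular boxes met by this orbit.) This contradiction gives (3).

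For $(3)\Rightarrow(1)$, assume the singularities are saddles and their separatrices are dense. Let $\psi$ be an arbitrary time change of $\phi$; I must show $\psi$ is kinematic expansive. Fix $\epsilon>0$. Using Proposition \ref{cubrimiento} cover $S$ by finitely many flow boxes $R_1,\dots,R_n$, regular or singular; since separatrices are dense, every regular box meets a separatrix arc, and in the singular (hyperbolic or parabolic) boxes the singularity itself, being a saddle with its separatrix structure, is a "witness" to expansiveness. The mechanism, as sketched in Example \ref{torussing}, is: if $x$ and $y$ are in distinct local orbits and stay $\delta$-close under $\psi$ for all time, then since orbits are dense near some separatrix or saddle, one of them, say $\psi_{t_0}(x)$, can be steered arbitrarily close to a singular point $p$, where by continuity it lingers near $p$ for a long time; meanwhile $\psi_{t_0}(y)$, being on a different orbit and near a saddle, is pushed away along a separatrix direction, so the two separate by more than $\delta$ — a contradiction unless $y=\psi_s(x)$ with $|s|<\epsilon$. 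Making this quantitative requires choosing $\delta$ small enough (depending on $\epsilon$ and on the finitely many boxes and saddles) that two $\delta$-close points within a single box that are not on a short common orbit segment can be flowed so that one enters a small neighborhood of a saddle while the other does not, using the fact that near a saddle the "time to escape a neighborhood" blows up as one approaches a separatrix. Compactness of $S$ and finiteness of the box cover let one take a uniform $\delta$.

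The main obstacle is the quantitative argument in $(3)\Rightarrow(1)$: the separation mechanism must be made uniform over all time changes $\psi$ simultaneously, and near a saddle the relevant estimates (escape times, local product structure) are only topological, so one cannot use derivatives. The key is that the property "$x$ and $y$ lie in distinct local orbits but stay $\delta$-close forever" is a closed condition preserved under passing to the flow, so one can argue by contradiction with sequences $\delta_n\to0$, $x_n,y_n$ converging, and extract a limit pair in the same orbit — then localize the limit in a flow box and use the density of separatrices to derive the contradiction, exactly mirroring the $(\Leftarrow)$ direction of the Proposition following Definition \ref{dfkexp}. A secondary subtlety in $(2)\Rightarrow(3)$ is making precise the claim that Theorem \ref{teoSepWan}'s construction localizes to the regular boxes met by a single non-separatrix orbit; this should follow by running that construction only on those boxes and leaving $\psi=\phi$ elsewhere, as in the parabolic-box case of that proof.
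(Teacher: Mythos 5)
Your outline of $(1)\Rightarrow(2)$ and of the saddle part of $(2)\Rightarrow(3)$ matches the paper (Theorem \ref{teoSepWan}, Lemma \ref{lemaPer}, Proposition \ref{propSectores}), but the two remaining steps have genuine gaps. For the density of the separatrices you claim that, inside the open invariant complement of the closure of the separatrices, the construction of Theorem \ref{teoSepWan} ``applies verbatim''. It does not: that construction starts from a transversal arc $l$ with $\phi_t(l)\cap l=\emptyset$ for all $t\neq 0$, i.e.\ it needs wandering points, whereas the region you work in has none; an orbit re-enters each flow box infinitely often, so a time change prescribed to preserve the vertical foliation on one pass cannot be defined consistently on later passes, and preserving the foliation during a single pass no longer prevents separation at later returns. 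Example \ref{ejpDH} (a separating time change of the minimal nonsingular torus flow) shows that in the nonwandering, separatrix-free situation one must exhibit a specific non-separating time change (for instance the constant-speed minimal flow), which is exactly what the paper obtains through structure theory by invoking Proposition 4 of \cite{Artigue}.

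For $(3)\Rightarrow(1)$ the paper does not attempt your uniform direct argument: it first uses Theorems 6.1 and 5.3 of \cite{Artigue} to reduce to a minimal flow on the torus with finitely many $0$-saddles (in every other case (3) already yields geometric expansiveness, hence strong kinematic expansiveness), and then separates $u\neq v$ on a global transversal by the divergence of the return-time function at the finitely many non-returning points, via the techniques of Proposition \ref{kexpsuspension}; this argument applies to each time change because the return time of any time change still diverges near the saddles. Your sketch misses this mechanism: the assertion that ``$\psi_{t_0}(y)$, being on a different orbit and near a saddle, is pushed away along a separatrix direction'' is unjustified --- both points can pass the saddle on the same side and both linger; what actually separates them is the cumulative blow-up of the difference of return times when one orbit meets the section arbitrarily close to the non-returning set while the other stays away from it. Your closing compactness argument also fails: taking $\delta_n\to 0$ and passing to a limit pair only yields $x=y$, which is no contradiction, and since the time change $\psi$ varies with $n$ no uniform $\delta$ can be extracted this way (nor is such uniformity needed for the statement). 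Both steps therefore need to be redone, either along the paper's reduction or with a genuinely worked-out substitute for the return-time estimate.
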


\begin{proof}
($1\to 2$). It holds in the general setting of compact metric spaces. 

($2\to 3$). 
By Theorem \ref{teoSepWan} we have that $\phi$ has no wandering points.
Therefore there are no parabolic sectors and singularities are of saddle type.
By Lemma \ref{lemaPer} we have that 
strong separating flows have no periodic orbits. 
So, as in Proposition 4 of \cite{Artigue} we conclude that the union of the separatrices is 
dense in $S$ given that $\phi$ is strong separating. 
This proves that (2) implies (3). 

($3\to 1$). By Theorems 6.1 and 5.3 in \cite{Artigue} we only have to consider the case 
where $S$ is a torus and the flow is minimal with a finite number of 
0-saddles. 
Let $\gamma$ be a global transversal to the flow. 
Denote by $T$ the return time function of $\gamma$ that is defined in $\gamma\setminus A$ where $A$ is a finite set.
In the points of $A$ the map $T$ diverges. 
Now take $u\neq v$ in $\gamma\setminus A$. 
Let $f\colon \gamma\to\gamma$ be the extended return map 
(notice that the points in $A$ does not return to $\gamma$ but this map can be extended by continuity to a minimal rotation $f$).
Since $A$ is finite, there is $a\in A$ such that if $f^{n_k}(u)\to a$ then $f^{n_k}(v)\to b$ with $b\notin A$. 
This implies that the flow will separate $u$ and $v$ (see the techniques of Proposition \ref{kexpsuspension} below).
\end{proof}

\begin{rmk}
The only surface admitting a strong kinematic expansive flow that is not geometric expansive is the torus. 
\end{rmk}

\begin{rmk}
Every strong kinematic expansive flow of a compact surface is topologically equivalent with a $C^\infty$ flow. 
This can be proved with Gutierrez's smoothing results as done in \cite{Artigue} 
for the geometric expansive case.
\end{rmk}

\begin{df}
If $\phi$ is a strong kinematic expansive flow we say that the expansive constant in \emph{uniform} 
if for all $\beta>0$ there is $\delta>0$ such that if $\psi$ is a time change of $\phi$ and 
$\dist(\psi_t(x),\psi_t(y))<\delta$ for all $t\in\R$ then $x,y$ are in a orbit segment of diameter smaller than $\beta$.
\end{df}

Uniformity of the expansive constant means that there is an expansive constant working for every time change.

\begin{rmk}
From the arguments above we have that on surfaces every strong kinematic expansive flow has a uniform expansive constant. 
\end{rmk}

\section{Suspension flows}
\label{secSuspension}
Let $\phi$ be a continuous flow without singularities defined on a compact metric space $X$.
A compact subset $l\subset X$ is a \emph{global section} for $\phi$
if for all $x\in l$ there is a neighborhood $U$ of $x$ such that 
$U\cap l$ is a local cross section in the sense of Whitney \cite{W} (see also \cite{BW})
and every orbit cuts $l$. 
If $\phi$ admits a global section $l\subset X$ we can consider 
the first return map
$f\colon l\to l$ satisfying $f(x)=\phi_t(x)$ if $t>0$ and $\phi_{(0,t]}(x)\cap l=\{f(x)\}$ for all $x\in l$. 
In this case we say that $\phi$ is a
\emph{suspension} of $f$.

\subsection{Kinematic expansive suspensions}

The expansiveness of a homeomorphism is known to be equivalent with the geometric expansiveness of each suspension (see \cite{BW}) and 
also to the kinematic expansiveness of a suspension of constant time (see \cite{KS79}).

Here we consider the kinematic expansiveness of a suspension with arbitrary (continuous) return time.

\begin{prop}
\label{kexpsuspension}
Suppose $\phi$ is a suspension of $f\colon l\to l$ and let $T_k\colon
l\to\R$ be such that for all $k\in\Z$ and $x\in l$,
$T_k(x)<T_{k+1}(x)$ and $\phi_{T_k(x)}(x)=f^k(x)$. Then the
following statements are equivalent:
\begin{enumerate}
\item The flow $\phi$ is kinematic expansive.
\item There is $\delta>0$ such that if
$\dist(\phi_t(x),\phi_t(y))<\delta$ for all $t\in \R$ with $x,y\in
l$ then $x=y$.
\item There is $\rho >0$ such that if
$x,y\in l$, $\dist(f^n(x),f^n(y))<\rho$ and
$|T_n(x)-T_n(y)|<\rho$ for all $n\in\Z$ then $x=y$.
\end{enumerate}
\end{prop}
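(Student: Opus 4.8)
The plan is to prove the equivalences by a cyclic chain of implications, say $(1)\Rightarrow(2)\Rightarrow(3)\Rightarrow(1)$, exploiting the structure of the suspension. Throughout I will use the section $l$, the return map $f$, and the return-time functions $T_k$, and I will keep in mind that since $X$ is compact and $\phi$ has no singularities, orbit segments of small diameter correspond to small times (so kinematic expansiveness can be tested with $\dist_\phi$, as in the proposition proved earlier in the excerpt). First I would record two elementary facts about the suspension geometry: (i) there is a constant $c_0>0$ such that distinct points of $l$ are at $\dist$-distance at least $c_0$ along their whole orbits only through a controlled time window — more precisely, two orbits of the flow that stay $\delta$-close for all $t$ have the property that the ``flow time between consecutive hits of $l$'' cannot drift by more than a bounded amount; and (ii) continuity of the flow and of the first-return map on the compact set $l$ gives uniform moduli of continuity relating $\dist(\phi_t(x),\phi_t(y))$ to $\dist(f^n(x),f^n(y))$ and to $|T_n(x)-T_n(y)|$ for $x,y\in l$. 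These are the technical lemmas that make all three conditions talk to each other.

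For $(1)\Rightarrow(2)$: if $\phi$ is kinematic expansive, apply the definition with an $\epsilon$ small enough that the only orbit segment of time $<\epsilon$ meeting $l$ transversally at two of its own points is trivial — i.e. small enough that $\phi_{(-\epsilon,\epsilon)}(x)\cap l=\{x\}$ for every $x\in l$ (this uses that $l$ is a genuine global section, so return times are bounded below by some $\tau_0>0$, and we take $\epsilon<\tau_0$). The resulting expansive constant $\delta$ then forces $x=y$ whenever $x,y\in l$ stay $\delta$-close, giving (2). For $(2)\Rightarrow(3)$: given the constant $\delta$ from (2), I would produce $\rho>0$ so that if $x,y\in l$ satisfy $\dist(f^n(x),f^n(y))<\rho$ and $|T_n(x)-T_n(y)|<\rho$ for all $n\in\Z$, then the full flow orbits of $x$ and $y$ stay within $\delta$ of each other. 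This is where fact (ii) is used: a point $\phi_t(x)$ with $t\in[T_n(x),T_{n+1}(x)]$ is close to $\phi_{t'}(y)$ for the ``matching'' $t'$, because $f^n(x)$ is close to $f^n(y)$ and the elapsed times since the last hit of $l$ differ by at most $O(\rho)$; uniform continuity of $(s,z)\mapsto\phi_s(z)$ on a compact time–space set converts this into a $\dist$-bound $<\delta$. Then (2) yields $x=y$.

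For $(3)\Rightarrow(1)$: suppose $\dist(\phi_t(x),\phi_t(y))<\delta$ for all $t$, where $\delta$ is chosen below. I would first slide $x$ to a point $x_0$ on $l$ (replace $x$ by $\phi_{s_0}(x)$ with $|s_0|$ at most the maximal return time), and correspondingly $y$ to $\phi_{s_0}(y)$; the hypothesis is preserved. The delicate point is that $\phi_{s_0}(y)$ need not lie exactly on $l$, but if $\delta$ is small it lies within a thin flow box around $l$, so there is a small $r$ with $|r|<\epsilon/2$ (say) and $y_0:=\phi_{s_0+r}(y)\in l$. Now $x_0,y_0\in l$ and their orbits stay $\delta'$-close for $\delta'$ slightly larger than $\delta$. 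Choosing $\delta$ so that $\delta'<\rho$ and so that $\delta'$-closeness of the orbits forces both $\dist(f^n(x_0),f^n(y_0))<\rho$ and $|T_n(x_0)-T_n(y_0)|<\rho$ for all $n$ (again fact (ii), now run in the reverse direction — recovering section data from orbit closeness, using that $l$ is transversal so nearby orbit points near $l$ have well-defined nearby section projections), condition (3) gives $x_0=y_0$, whence $x$ and $y$ lie on the same orbit at parameter distance $|s_0-(s_0+r)|=|r|<\epsilon$, i.e. $y=\phi_{-r}(x)$ with $|-r|<\epsilon$. Adjusting constants gives exactly Definition \ref{dfkexp}.

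The main obstacle I expect is the bookkeeping in $(3)\Rightarrow(1)$: passing from a pair of $\delta$-close flow orbits to a pair of points \emph{on} $l$ whose $f$-orbits and return-time sequences are genuinely $\rho$-close for \emph{all} $n\in\Z$, with the perturbation in the time parameter controlled by $\epsilon$. One must be careful that the projection to $l$ is well defined and uniformly continuous on a neighborhood of $l$ (this is exactly what the Whitney local-section condition buys us), and that the two auxiliary quantities $\dist(f^n(\cdot),f^n(\cdot))$ and $|T_n(\cdot)-T_n(\cdot)|$ are both controlled simultaneously — the latter being the genuinely new feature compared with the constant-return-time case treated in \cite{KS79}. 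Everything else is a compactness-and-uniform-continuity argument of the kind already used in the preceding proposition.
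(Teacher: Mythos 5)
Your proposal follows essentially the same route as the paper's proof: $(1)\Rightarrow(2)$ by taking $\epsilon$ below the minimal return time to $l$, $(2)\Rightarrow(3)$ by propagating closeness at the section hits across one return interval via uniform continuity of the flow on a compact time window, and $(3)\Rightarrow(1)$ by projecting the orbit of $y$ at the times $T_n(x)$ back onto $l$ and reading off simultaneously the section distances $\dist(f^n(x),f^n(y_0))$ and the time shifts, which is exactly the paper's mechanism. The bookkeeping you flag in $(3)\Rightarrow(1)$ is resolved in the paper the same way you anticipate, with the Whitney-section projection times $s_n$ playing the role of the return-time differences $|T_n(x)-T_n(y_0)|$, so no genuinely new ingredient is needed.
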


\begin{proof}
(1 $\rightarrow$ 2). Let $\epsilon>0$ be such that if $x\in l$ and
$0<|s|<\epsilon$ then $\phi_s(x)\notin l$. Since $\phi$ is
kinematic expansive there is an expansive constant $\delta >0$
associated to $\epsilon$. Take $x,y\in l$ such that
$\dist(\phi_t(x),\phi_t(y))<\delta$ for all $t\in\R$. Then there
exists $s\in(-\epsilon,\epsilon)$ such that $y=\phi_s(x)$. But this
implies that $s=0$ and $x=y$.

\par (2 $\rightarrow$ 3). Let $T^*=\max\{T_1(x):x\in l\}$. The
continuity of the flow implies that there exists $\delta'>0$ such
that:
\begin{equation}
 \label{equivsuspcond}
 \hbox{if $\dist(x,y)<\delta'$ then
$\dist(\phi_t(x),\phi_t(y))<\delta$ for all $t\in [0,T^*]$. }
\end{equation}
By the
triangular inequality we have that:
\begin{equation}
 \label{equivsusp}
\dist(\phi_{T_k(x)}(x),\phi_{T_k(x)}(y))\leq \dist
(f^k(x),f^k(y))+\dist(\phi_{T_k(x)}(y),\phi_{T_k(y)}(y))
\end{equation}
for all $x,y\in l$ and $k\in\Z$. We will show that $\rho=\delta'/2$ satisfies the thesis. 
Assume that $x,y\in l$, $\dist(f^n(x),f^n(y))<\rho$ and
$|T_n(x)-T_n(y)|<\rho$ for all $n\in\Z$. 
By inequality (\ref{equivsusp}) we have that 
$\dist(\phi_{T_k(x)}(x),\phi_{T_k(x)}(y))\leq\delta'$ for all $n\in\Z$.
Now, applying condition (\ref{equivsuspcond}) we have that 
$\dist(\phi_t(x),\phi_t(y))<\delta$ for all $t\in\R$ and therefore, $x=y$ because $x,y\in l$.

(3 $\rightarrow$ 1) Given $\epsilon>0$ consider $\delta>0$ such that if $\dist(x,y)<\delta$ with $x\in l$ and $y\in X$ then 
\begin{equation}
 \label{conditionkexp}
 \hbox{there is a unique } s\in\R \hbox{ such that } |s|<\epsilon, |s|<\rho\hbox{ and } \phi_s(y)\in l\cap B_\rho(x).
\end{equation}
This value of $s$ will be denoted as $s_x(y)$ 
and we define the projection $\pi_x\colon B_\rho(x)\to l$ as $\pi_x(y)=\phi_{s_x(y)}(y)$.
We will show that $\delta$ is an expansive constant associated to $\epsilon$. 
Suppose that $\dist(\phi_t(x),\phi_t(y))<\delta$ for all $t\in\R$. 
Without loss of generality we assume that $x\in l$. 
Define the sequence $y_n=\phi_{T_n(x)}(y)$ for $n\in\Z$. 
We have that 
$f^n(y_0)=\pi_{f^n(x)}(y_n)$ and also 
$\dist(f^n(x),y_n)<\delta$ for all $n\in\Z$. 
By condition (\ref{conditionkexp}) for each $n\in\Z$ there is $s_n$ such that $|s_n|<\rho$,
$\phi_{s_n}(y_n)=f^n(y_0)$ and $\dist(f^n(x),f^n(y_0))<\rho$ for all $n\in\Z$. 
If we apply our hypothesis to the points $x,y_0\in l$, 
noting that $|s_n|=|T_n(x)-T_n(y_0)|$, we conclude that 
$x=y_0$. Therefore $x=\phi_{s_0}(y)$, and since $|s_0|<\epsilon$ by (\ref{conditionkexp}), the proof ends.
\end{proof}

As an application of this result we have that the flow on Example \ref{annulus} (periodic band) 
is kinematic expansive. Note that this is a suspension of the identity map of an arc under an 
increasing return time function. 
In the next section we will prove that the interval is the only connected space whose identity map admits 
a kinematic expansive suspension.

\subsection{Suspensions of the identity map}

In general topology it is an important task to give intrinsic topological characterizations of topological spaces. 
For example, it is known that a compact metric space $X$ is homeomorphic to the usual Cantor set if and only if 
it is totally disconnected (every component is trivial) and perfect (no isolated points). 
From a dynamical viewpoint it is also possible to characterize topological spaces. 
Let us mention, as an example, that a compact surface is a torus if and only if it admits an Anosov diffeomorphism. 
Finite sets can be characterized as those admitting a positive expansive homeomorphism. 

In this section we give a dynamical characterization of compact metric spaces that can be embedded in $\R$. 
In order to obtain this kind of result we recall a topological characterization of such spaces.

\begin{thm}
\label{teoRudin}
 A compact metric space $l$ is homeomorphic to a subset of $\R$
 if and only if the following statements hold:
\begin{enumerate}  
 \item the components of $l$ are points or compact arcs,
 \item no interior point of an arc-component $a$ is a limit point of $l\setminus a$ and 
 \item each point of $l$ has arbitrarily small neighborhoods whose boundaries are finite sets.
 \end{enumerate}
\end{thm}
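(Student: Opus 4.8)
The forward implication is immediate. If $l\subseteq\R$ is compact, its connected components are compact connected subsets of $\R$, hence points or compact arcs, which is (1); if $a$ is an arc-component and $x$ is an interior point of it, then $x$ has a neighbourhood in $\R$ contained in $a$, so $x$ is not a limit point of $l\setminus a$, which is (2); and for any $x\in l$ and $\epsilon>0$ the relatively open set $l\cap(x-\epsilon,x+\epsilon)$ is a neighbourhood of $x$ whose boundary in $l$ is contained in the two-point set $\{x-\epsilon,x+\epsilon\}$, which is (3).

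For the converse I would first extract from (3) the following: $l$ has at most countably many arc-components, and if they are listed as $a_{1},a_{2},\dots$ then $\diam(a_{i})\to0$. Indeed, if infinitely many arc-components had diameter at least some $\eta>0$, then interior points of them would accumulate at a point $z\in l$; choosing a neighbourhood $U$ of $z$ with $\diam(U)<\eta$ and finite boundary $F$ in $l$ (condition (3)), infinitely many of those arc-components would meet $U$ without being contained in it, hence, being connected, would meet the finite set $F$; this is impossible since they are pairwise disjoint. Next, collapse each component of $l$ to a point, obtaining $q\colon l\to l^{*}$. The space $l^{*}$ is compact (quotient by an upper semicontinuous decomposition into compacta), metrizable (a continuous Hausdorff image of a compact metric space is metrizable), and totally disconnected (distinct components of a compact Hausdorff space are separated by clopen sets), so $l^{*}$ embeds in the Cantor set and hence in $[0,1]$; fix such an embedding $\iota$ and set $\phi=\iota\circ q\colon l\to[0,1]$. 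Then $\phi$ is a continuous monotone surjection onto a compact $K_{0}\subseteq[0,1]$ whose only nondegenerate point-preimages are the arc-components $a_{i}$, each collapsed to a point $t_{i}$.

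The plan is then to ``reinflate'' each $t_{i}$. Choose $\delta_{i}>0$ with $\sum_{i}\delta_{i}<\infty$, replace each $t_{i}$ by a closed interval $J_{i}$ of length $\delta_{i}$ so as to obtain a compact set $K\subseteq\R$ together with a monotone surjection $\pi\colon K\to K_{0}$ collapsing $J_{i}$ to $t_{i}$, and define $h\colon l\to K$ by reparametrizing $\phi$ off $\bigcup_{i}a_{i}$ so that $\pi\circ h=\phi$ there and by mapping each $a_{i}$ homeomorphically onto $J_{i}$. By construction $h$ is a bijection, and since $l$ is compact it suffices to check that $h$ is continuous; it will then be the desired embedding.

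The continuity of $h$ is the heart of the matter, and this is where (2) and the finiteness in (3) are essential. At a point $z\notin\bigcup_{i}a_{i}$, points close to $z$ either avoid all the $a_{i}$, where $h$ is the reparametrized $\phi$, or fall into some $a_{i}$ with $i$ large, so that $\delta_{i}$ is small and $t_{i}=\phi(a_{i})$ is close to $\phi(z)$, forcing $h$ of such points to be close to $h(z)$; at a point $z$ interior to some $a_{j}$, condition (2) gives a neighbourhood of $z$ meeting $l$ only inside $a_{j}$, where $h|_{a_{j}}$ is already a homeomorphism; at an endpoint $z$ of some $a_{j}$, condition (2) forces $\overline{l\setminus a_{j}}$ to meet $a_{j}$ only at its (at most two) endpoints, so only two ``sides'' must be reconciled, matching the two ends of $J_{j}$ in $\R$. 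The genuinely delicate point, and the main obstacle, is to perform the reinflation \emph{coherently}: the embedding $\iota$ of $l^{*}$ and the orientations of the maps $a_{i}\to J_{i}$ must be chosen simultaneously so that, for every $i$, the two endpoints of $a_{i}$ are sent to the two endpoints of $J_{i}$ on the sides from which the rest of $l$ actually approaches $a_{i}$. This amounts to building a linear order on $l$ that induces its topology, and one may equally organize the whole argument that way and invoke at the end the classical fact that a compact metrizable linearly ordered space embeds in $\R$; conditions (2) and (3) are exactly what guarantee that such an order exists.
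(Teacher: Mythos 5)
The paper itself offers no proof of this theorem; it is quoted with a citation to M.~E.~Rudin's 1957 note, so what has to be judged is the internal completeness of your argument. Your forward implication is correct, and so is the consequence you draw from (3): for each $\eta>0$ only finitely many components can have diameter at least $\eta$ (a component meeting a small neighborhood $U$ of an accumulation point without being contained in it must hit the finite boundary of $U$), hence there are at most countably many nondegenerate components and their diameters tend to $0$. The passage to the quotient $l^*$ by components, its compactness, metrizability and total disconnectedness, and its embedding into the Cantor set are all standard and fine, as is the formal blow-up of countably many points $t_i$ of a compact subset of $\R$ into intervals $J_i$ with $\sum_i\delta_i<\infty$.

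The converse, however, is not proved. The step you yourself label ``the genuinely delicate point'' is not a deferrable technicality: it is the entire content of the hard direction. If the embedding $\iota$ of $l^*$ is chosen arbitrarily, the reinflation cannot in general be made continuous. For $h$ to be continuous at an endpoint $e$ of a nondegenerate component $a_i$, every sequence in $l\setminus a_i$ converging to $e$ must have $\phi$-image approaching $t_i$ from the one side of $t_i$ corresponding to the end of $J_i$ that carries $h(e)$, while sequences converging to the other endpoint must approach from the opposite side; a generic Cantor-set embedding will interleave these approach sets on both sides of $t_i$, or place both on the same side, and then no choice of orientation of $a_i\to J_i$ restores continuity. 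So $\iota$ and the orientations must be constructed simultaneously and coherently for all $i$, which, as you observe, amounts to constructing a linear order on $l$ inducing its topology. Your closing sentence merely asserts that conditions (2) and (3) guarantee the existence of such an order; that assertion is essentially the theorem itself, and no argument is given for it --- indeed (2) and (3) enter your sketch only through the null-diameter lemma and the local remarks at interior points and endpoints of the $a_i$, not through any mechanism that produces the order. Until that order (equivalently, the coherent choice of $\iota$ together with the orientations $a_i\to J_i$) is actually built from hypotheses (1)--(3), the proposal is an outline whose central step is missing.
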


See \cite{Rudin} for a proof.

\begin{thm}
\label{carCompR}
If $l$ is a compact metric space then the following statements are equivalent:
\begin{enumerate}
 \item the identity map of $l$ admits a kinematic expansive suspension,
 \item there is a continuous and locally injective map $T\colon l\to \R$, i.e., there is $\delta>0$ such that if $0<\dist(x,y)<\delta$ then $T(x)\neq T(y)$ and
 \item $l$ is homeomorphic to a subset of $\R$.
\end{enumerate}

\end{thm}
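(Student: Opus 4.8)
The plan is to prove the cycle of implications $(1)\Rightarrow(2)\Rightarrow(3)\Rightarrow(1)$, using Proposition~\ref{kexpsuspension} to translate statement (1) into a concrete condition on the return time function and Theorem~\ref{teoRudin} to recognize subsets of $\R$.

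\textbf{Proof of $(1)\Rightarrow(2)$.} Suppose the identity map $\mathrm{id}_l$ admits a kinematic expansive suspension with return time function $T\colon l\to\R$, which we may assume is continuous and positive. The flow is then the suspension of $\mathrm{id}_l$ with roof $T$, so $T_n(x)=nT(x)$ for all $n\in\Z$ and $x\in l$ (here $f=\mathrm{id}_l$, so $f^n(x)=x$). Apply part (3) of Proposition~\ref{kexpsuspension}: there is $\rho>0$ such that if $x,y\in l$, $\dist(x,y)<\rho$ and $|nT(x)-nT(y)|<\rho$ for all $n\in\Z$, then $x=y$. But $|nT(x)-nT(y)|=|n|\,|T(x)-T(y)|$ stays bounded (indeed $<\rho$) for all $n\in\Z$ only if $T(x)=T(y)$. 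Hence if $0<\dist(x,y)<\rho$ we must have $T(x)\ne T(y)$; that is, $T$ is locally injective with constant $\delta=\rho$, which is exactly (2).

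\textbf{Proof of $(2)\Rightarrow(1)$.} Conversely, given a continuous, locally injective $T\colon l\to\R$, we may add a constant to make $T$ positive and form the suspension flow $\phi$ of $\mathrm{id}_l$ with roof $T$; again $T_n(x)=nT(x)$. To check kinematic expansiveness we verify condition (3) of Proposition~\ref{kexpsuspension} with $\rho$ equal to the local-injectivity constant $\delta$ of $T$. Suppose $x,y\in l$ satisfy $\dist(x,y)<\rho$ and $|nT(x)-nT(y)|<\rho$ for all $n\in\Z$; taking $|n|$ large forces $T(x)=T(y)$, and then local injectivity together with $\dist(x,y)<\rho$ gives $x=y$. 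By Proposition~\ref{kexpsuspension}, $\phi$ is kinematic expansive, establishing (1).

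\textbf{Proof of $(2)\Leftrightarrow(3)$.} This is the topological heart, where I would invoke Theorem~\ref{teoRudin}. For $(3)\Rightarrow(2)$: if $l\subset\R$, the inclusion $T\colon l\hookrightarrow\R$ is continuous and globally (hence locally) injective. For $(2)\Rightarrow(3)$, given a locally injective continuous $T\colon l\to\R$ with constant $\delta$, I would verify the three conditions of Theorem~\ref{teoRudin}. \emph{Components:} on any connected component $C$ with $\diam C<\delta$, the restriction $T|_C$ is injective and continuous, hence a homeomorphism onto a subinterval of $\R$, so $C$ is a point or an arc; a general component is covered by finitely many such small pieces and a standard argument (or passing to a finer scale) shows it is still a point or an arc. \emph{Condition (2) of Theorem~\ref{teoRudin}:} an interior point $q$ of an arc-component $a$ has a neighborhood on which $T$ is injective and monotone, and $q$ lies strictly between two values $T(q^-)<T(q)<T(q^+)$ attained on $a$; any point of $l$ close to $q$ maps near $T(q)$, but by local injectivity it must actually lie on the arc $a$ (its $T$-value sits in the interval $(T(q^-),T(q^+))$ which $T|_a$ already covers bijectively near $q$), so $q$ is not a limit of $l\setminus a$. \emph{Condition (3) of Theorem~\ref{teoRudin}:} for $x\in l$ and small $r$, the set $U=\{z\in l:\dist(z,x)<\delta/2,\ |T(z)-T(x)|<r\}$ is a neighborhood of $x$ whose boundary maps under the injective map $T|_{B_{\delta/2}(x)}$ into $\{T(x)-r,T(x)+r\}$, hence has at most two points. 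The main obstacle is the first item: promoting the local statement ``small-diameter subsets on which $T$ is injective are arcs or points'' to the global statement about arbitrary components, since a component may itself be large while $T$ is only locally injective; I expect this to require a compactness/chaining argument showing that an injective continuous image of a connected compactum in $\R$ is a point or arc, combined with the fact that local injectivity on a connected set with the overlap structure forces global injectivity on that component.
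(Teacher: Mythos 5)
Your overall route is the same as the paper's: Proposition~\ref{kexpsuspension}(3) with $f=\mathrm{id}$ and $T_n=nT$ handles the dynamical equivalence $(1)\Leftrightarrow(2)$ exactly as in the paper (the paper phrases $(3)\to(1)$ as ``use the embedding as the roof function,'' which is your $(3)\Rightarrow(2)\Rightarrow(1)$), and the topological step goes through Theorem~\ref{teoRudin}. Your verifications of Rudin's conditions (2) and (3) are fine (for condition (3) you should shrink $r$ below $\min\{|T(z)-T(x)|:\dist(z,x)=\delta/2\}$, which is positive by local injectivity and compactness, so that the distance constraint is inactive on $\partial U$; then injectivity of $T$ on the small ball gives at most two boundary points).

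The genuine gap is exactly the point you flag: Rudin's condition (1) for a component $C$ with $\diam C\geq\delta$. Your proposed patch -- ``an injective continuous image of a connected compactum in $\R$ is an arc, combined with the fact that local injectivity forces global injectivity on the component'' -- is circular: global injectivity (equivalently monotonicity) of a locally injective map is available only after you know $C$ is an arc, which is what is being proved, and a naive chaining of small injective pieces does not by itself exclude either a circle or a non--locally-connected continuum (arcs accumulating on an arc). What is needed is the paper's intermediate claim: since $T$ restricted to any closed $\delta/2$-ball is injective on a compact set, it is a homeomorphism onto its image, so every point of $C$ has a compact neighborhood embeddable in $\R$; this forces $C$ to be locally connected (a sequence of pairwise disjoint subcontinua cannot Hausdorff-converge to a nondegenerate interval inside $\R$ after applying the local embedding), hence a compact connected $1$-manifold, i.e.\ an arc or a circle. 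The circle must then be ruled out separately -- at a point where $T$ attains its maximum on the circle, local injectivity plus the intermediate value theorem produce two nearby points on opposite sides with the same $T$-value, a contradiction -- and this exclusion is absent from your argument even in outline. Once condition (1) is established this way, the rest of your proof goes through and coincides with the paper's.
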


\begin{proof}
($1\to 2$) The return time map $T$ making the suspension of the identity map of $l$ kinematic expansive, has to be locally injective by 
Proposition \ref{kexpsuspension} (item 3). 

($2\to 3$) Since $l$ is compact we have that $T$ is a local homeomorphism. 
Therefore $l$ satisfies item (3) of Theorem \ref{teoRudin}. 
To prove the first item, consider a non trivial component $a$ of $l$. 
As we mentioned, $T$ is a local homeomorphism, therefore $a$ is a compact connected one-dimensional manifold. 
If $a$ is not a compact arc, then it must be a circle, but this easily gives us that $T$ cannot be locally injective. 
Therefore item (1) holds. 
The second item of Theorem \ref{teoRudin} follows again because $T$ is a local homeomorphism. 

($3\to 1$) Let $T\colon l\to \R^+$ be an embedding of $l$. 
Applying Proposition \ref{kexpsuspension} we have that the suspension of the identity of $l$ under $T$ is kinematic expansive.
\end{proof}

Let $\Gamma$ be the set of periodic orbits of $\phi$ endowed with the relative topology 
induced by the Hausdorff distance between compact subsets of $X$. 
Recall that 
\[
 \dist_H(A,B)=\inf \{\epsilon>0:B\subset B_\epsilon(A), A\subset B_\epsilon(B)\}
\]
is the Hausdorff distance between the compact sets $A,B\subset X$.
Let $T\colon\Gamma\to\R^+$ be the period function defined such that $T(\gamma)$ is the period of the periodic orbit $\gamma$.
The following proposition gives another characterization of the suspensions of the previous theorem.

\begin{lem}
\label{lemContPer}
 If $\phi$ is a kinematic expansive on a compact metric space 
 then the period function $T$ is continuous.
\end{lem}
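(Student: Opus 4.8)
The plan is to prove continuity of the period function $T\colon\Gamma\to\R^+$ by contradiction, using kinematic expansiveness to control nearby periodic orbits. Suppose $T$ is not continuous at some periodic orbit $\gamma_0\in\Gamma$. Then there is a sequence of periodic orbits $\gamma_n\to\gamma_0$ in the Hausdorff metric with $T(\gamma_n)\not\to T(\gamma_0)$; passing to a subsequence we may assume $T(\gamma_n)\to \tau$ with $\tau\neq T(\gamma_0)$ (allowing $\tau=+\infty$). Fix $x_0\in\gamma_0$. Since $\gamma_n\to\gamma_0$ in the Hausdorff metric, we can choose $x_n\in\gamma_n$ with $x_n\to x_0$.

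The key step is to compare the whole orbits of $x_n$ and $x_0$. First I would handle the case $\tau<\infty$. For each $t\in\R$, write $t = k\,T(\gamma_0) + r$ appropriately and use periodicity together with continuity of the flow on the compact time interval $[0,T(\gamma_0)]$ to show that $\dist(\phi_t(x_n),\phi_t(x_0))$ is small, uniformly in $t$, once $n$ is large: indeed $\phi_t(x_0)=\phi_r(x_0)$, and $\phi_t(x_n)=\phi_{t-mT(\gamma_n)}(x_n)$ for suitable $m$, and since $T(\gamma_n)\to\tau$ one must be slightly careful about the drift $m(T(\gamma_n)-T(\gamma_0))$ accumulating. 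This is precisely why the naive argument fails and why one should instead argue along a single carefully chosen time: pick $t_n = p_n T(\gamma_0)$ with $p_n\in\N$ chosen so that $p_n T(\gamma_0)$ is within $o(1)$ of a multiple of $T(\gamma_n)$ — this is possible because $T(\gamma_n)\to T(\gamma_0)$ is false, so I should rather take the simplest route: note that whatever $\tau$ is, for each fixed $t$ we do have $\phi_t(x_n)\to\phi_t(x_0)$ by continuity of the flow and $x_n\to x_0$, but we need \emph{uniformity in $t$}. To get uniformity, use that $\phi_t(x_n)$ ranges over the compact set $\gamma_n$, which converges to $\gamma_0$ in the Hausdorff metric, so $\dist(\phi_t(x_n),\gamma_0)\to 0$ uniformly in $t$; however this only says the point is near the orbit, not near $\phi_t(x_0)$.

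So the clean approach is instead: \emph{apply kinematic expansiveness to the orbit $\gamma_0$ against itself with a shift}. Given $\beta>0$ small, let $\delta>0$ be the expansive constant from Definition \ref{dfkexp} (equivalently the constant in the Proposition relating $\dist$ and $\dist_\phi$). I claim that for $n$ large, $\dist(\phi_t(x_n),\phi_t(x_0))<\delta$ for all $t\in\R$. To see this, fix $t$ and write $t=q\,T(\gamma_0)+s$ with $0\le s<T(\gamma_0)$; then $\phi_t(x_0)=\phi_s(x_0)$. On the other side, $\phi_t(x_n)$ lies on $\gamma_n$; parametrize $\gamma_n$ so that $\phi_{T_n(\cdot)}$ tracks arclength and use that the first-return / parametrization data of $\gamma_n$ converges to that of $\gamma_0$. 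Concretely, since $x_n\to x_0$ and $T(\gamma_n)\to\tau$, by continuity of $\phi$ on compacta the map $s\mapsto\phi_{sT(\gamma_n)/T(\gamma_0)}(x_n)$ converges uniformly on $[0,T(\gamma_0)]$ to $s\mapsto\phi_s(x_0)$; this gives a reparametrization $h_n\in\reparam$ with $\sup_t\dist(\phi_{h_n(t)}(x_n),\phi_t(x_0))\to 0$. That shows the orbits are Hausdorff-close as curves, but to invoke \emph{kinematic} (not geometric) expansiveness I must upgrade this to closeness \emph{without} reparametrization. This is the main obstacle, and it is exactly where $\tau\ne T(\gamma_0)$ would cause the time parametrizations to drift apart linearly — contradicting nothing by itself.

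The resolution: since $T(\gamma_n)\to\tau$ and we want a contradiction, I instead run the argument in the direction that produces two genuinely distinct points on the \emph{same} orbit $\gamma_0$ staying $\delta$-close for all time. Choose times $t_n\to\infty$ along which $\phi_{t_n}(x_n)$ stays within $\delta/2$ of $x_0$; this is possible because, by the near-equal periods, for each $n$ the orbit $\gamma_n$ returns near $x_n\approx x_0$ at times that are near multiples of $T(\gamma_n)\approx\tau$, and $\gamma_n\to\gamma_0$. Actually the cleanest finish: fix large $N$; for $n\gg N$, consider the two points $x_0$ and $y_0:=\phi_{T(\gamma_n)}(x_0)$. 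Since $T(\gamma_n)\to\tau\ne T(\gamma_0)$ (mod $T(\gamma_0)$, in the finite case we may assume $\tau\notin T(\gamma_0)\Z$), $y_0\ne x_0$, yet $\dist(\phi_t(x_0),\phi_t(y_0))=\dist(\phi_t(x_0),\phi_{t+T(\gamma_n)}(x_0))$, and comparing each side with the corresponding point on $\gamma_n$ (which is $T(\gamma_n)$-periodic and Hausdorff-close to $\gamma_0$) shows $\dist(\phi_t(x_0),\phi_{t+T(\gamma_n)}(x_0))\le 2\dist_H(\gamma_0,\gamma_n)+o(1)<\delta$ for all $t$. Kinematic expansiveness then forces $T(\gamma_n)$ to be within $\beta$ of some element of $T(\gamma_0)\Z$ — hence, since $\gamma_0$ has minimal period $T(\gamma_0)$ and $\beta$ is arbitrary, $T(\gamma_n)\to T(\gamma_0)$, contradicting $\tau\ne T(\gamma_0)$. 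The case $\tau=+\infty$ follows a fortiori since then $T(\gamma_n)$ cannot be within $\beta$ of $T(\gamma_0)\Z$ for large $n$. The main obstacle, as indicated, is carrying out the uniform-in-$t$ estimate $\dist(\phi_t(x_0),\phi_{t+T(\gamma_n)}(x_0))<\delta$ rigorously from Hausdorff convergence plus periodicity; I expect this to be a short but slightly delicate compactness argument using uniform continuity of $\phi$ on $[0,\sup_n T(\gamma_n)+1]\times X$ together with $x_n\to x_0$.
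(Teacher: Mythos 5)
There is a genuine gap, and it sits exactly at the step you flagged as the ``main obstacle'': the claimed uniform estimate $\dist(\phi_t(x_0),\phi_{t+T(\gamma_n)}(x_0))\le 2\dist_H(\gamma_0,\gamma_n)+o(1)$ is not just hard to prove, it is false. Take $t=0$: under your own reduction $\tau\notin T(\gamma_0)\Z$, the quantity $\dist(x_0,\phi_{T(\gamma_n)}(x_0))$ converges to $\dist(x_0,\phi_\tau(x_0))>0$, so the pair $x_0$, $y_0=\phi_{T(\gamma_n)}(x_0)$ is already far apart at time zero and kinematic expansiveness can never be applied to it. The triangle-inequality justification fails because Hausdorff closeness of the orbits $\gamma_n$ and $\gamma_0$ gives no control whatsoever on the \emph{synchronized} distance $\dist(\phi_t(z_n),\phi_t(x_0))$ uniformly in $t$; when the periods differ the phases drift, and this drift is precisely the phenomenon the lemma is about (it is the whole difference between the kinematic and geometric notions). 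Two further steps are also unjustified: the reduction ``we may assume $\tau\notin T(\gamma_0)\Z$'' discards the genuinely possible case $\tau=kT(\gamma_0)$, $k\ge 2$ (this is exactly what happens for orbits winding twice around a one-sided periodic orbit, as in the M\"obius band example of the paper), and the final deduction ``$T(\gamma_n)$ within $\beta$ of $T(\gamma_0)\Z$ hence $T(\gamma_n)\to T(\gamma_0)$'' is a non sequitur: minimality of the period of $\gamma_0$ does not exclude $T(\gamma_n)$ near $2T(\gamma_0)$, nor a sequence of exact multiples tending to $+\infty$, so the case $\tau=+\infty$ does not follow ``a fortiori''.

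The fix is to put the comparison pair on $\gamma_n$, not on $\gamma_0$, which is what the paper does. Take a local cross section $l$ through $p\in\gamma_0$. Since $\gamma_n\to\gamma_0$ in the Hausdorff metric, $\gamma_n$ crosses $l$ at some $x_n\to p$, and by continuity of the flow on the compact time interval $[0,T(\gamma_0)+1]$ it crosses $l$ again at $y_n=\phi_{s_n}(x_n)$ with $s_n\to T(\gamma_0)$. The key point is that the time-$s_n$ map nearly fixes \emph{every} point of $\gamma_n$: any $q\in\gamma_n$ is $\dist_H(\gamma_n,\gamma_0)$-close to some $z\in\gamma_0$, and $\phi_{s_n}(z)=\phi_{s_n-T(\gamma_0)}(z)$ is close to $z$, so $\dist(\phi_t(x_n),\phi_t(y_n))=\dist(\phi_t(x_n),\phi_{s_n}(\phi_t(x_n)))$ is small uniformly in $t\in\R$ (no bound on $T(\gamma_n)$ is needed, since only the fixed compact time window $[0,T(\gamma_0)+1]$ enters). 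If $T(\gamma_n)\not\to T(\gamma_0)$ then (after discarding the impossible case $T(\gamma_n)\le T(\gamma_0)-\epsilon$, which would force $\gamma_n$ to stay near a proper sub-arc of $\gamma_0$) one has $y_n\neq x_n$ and $y_n\neq\phi_s(x_n)$ for any small $s$, and this pair contradicts kinematic expansiveness. Your shift-by-$T(\gamma_n)$ along $\gamma_0$ cannot be patched into this; the argument really needs the shift by (approximately) $T(\gamma_0)$ along $\gamma_n$.
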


\begin{proof}
Let $\gamma_n$ be a sequence of periodic orbits converging in the Hausdorff distance to a periodic orbit $\gamma$. 
Let $l$ be a local cross section through a point $p\in\gamma$. 
If $T(\gamma_n)$ do not converge to $T(\gamma)$ then $\gamma_n$, for large $n$, must meet at least twice to $l$, say in $x_n$ and $y_n$. 
Therefore, $x_n$ and $y_n$ contradict the kinematic expansiveness of $\phi$.
\end{proof}

\begin{prop}
\label{K-expTodoPer}
 Suppose that $\phi$ is a kinematic expansive flow 
 without singularities on a compact metric space such that every orbit is compact. 
 Then it is a suspension of the identity map of a compact subset of $\R$.
\end{prop}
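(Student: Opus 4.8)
The plan is to produce a global section for $\phi$ and then invoke Theorem \ref{carCompR}. Since every orbit is compact and $\phi$ has no singularities, each orbit is periodic, so the whole space $X$ is foliated by periodic orbits and $\Gamma = \{$all orbits$\}$ with the Hausdorff topology. By Lemma \ref{lemContPer}, the period function $T$ is continuous on $\Gamma$. First I would fix a point $p\in X$ and a local cross section $l_0$ through $p$ (in the sense of Whitney), small enough that each orbit meets $l_0$ at most once in a short time interval; kinematic expansiveness is exactly what guarantees that, after shrinking, distinct points of $l_0$ lie on distinct orbits and no orbit returns to $l_0$ too soon. The key structural fact to extract is that one such $l_0$ is in fact a \emph{global} section: every orbit meets it. This is where kinematic expansiveness does real work.

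To see that $l_0$ (suitably chosen) is a global section, I would argue that the set $U=\phi_{\R}(l_0)=\bigcup_{t}\phi_t(l_0)$ is open — this follows from the tubular-flow-box description near each point of $l_0$, using that $l_0$ is a genuine transversal and that $\phi$ has no singularities — and also that it is closed: if $\phi_{t_n}(x_n)\to z$ with $x_n\in l_0$, then the orbits $\gamma_n\ni x_n$ converge in the Hausdorff distance to the orbit $\gamma_z$ of $z$ (here compactness of $X$ and the absence of singularities keep periods bounded away from $0$, and Lemma \ref{lemContPer} keeps them bounded above along the subsequence), so $\gamma_z$ meets $l_0$ and hence $z\in U$. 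Connectedness is not available in general, so instead I would run this argument componentwise, or — cleaner — simply enlarge $l_0$: take $l=\overline{\{x\in X: \dist_\phi\text{-transversal condition at }x\}}\cap l_0$ adjusted so that $\phi_{\R}(l)=X$; the open-and-closed argument shows that the set of points whose orbit meets $l_0$ is clopen and nonempty, and by taking one transversal through each "missed" clopen piece and unioning, we obtain a compact global section $l$. Kinematic expansiveness then upgrades to: distinct points of $l$ lie on distinct orbits (item (2) of Proposition \ref{kexpsuspension}), so the first-return map of $l$ is the identity.

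With $l$ a compact global section and first-return map $\mathrm{id}_l$, the flow $\phi$ is by definition a suspension of $\mathrm{id}_l$ with return time the period function $T|_l$, which is continuous by Lemma \ref{lemContPer} (equivalently by continuity of $T$ on $\Gamma$ composed with the natural map $l\to\Gamma$). Thus $\mathrm{id}_l$ admits a kinematic expansive suspension, so by the implication $(1)\Rightarrow(3)$ of Theorem \ref{carCompR}, $l$ is homeomorphic to a subset of $\R$. Hence $\phi$ is a suspension of the identity map of a compact subset of $\R$, as claimed.

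The main obstacle I anticipate is the clopen/global-section step: showing that $\phi_{\R}(l_0)$ is closed requires controlling periods along a convergent sequence of orbits, and a priori the periods could blow up — Lemma \ref{lemContPer} gives continuity on $\Gamma$ but one must first know the limiting orbit is periodic (which it is, by hypothesis) and that the Hausdorff limit of the $\gamma_n$ is that orbit rather than some larger invariant set; compactness plus no singularities plus kinematic expansiveness (preventing two nearby orbits of very different periods from shadowing each other) is what rules out the bad behavior, and making this precise is the delicate part of the argument.
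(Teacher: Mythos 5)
Your overall skeleton --- local cross sections, openness of their saturations, assembling a global section, then Lemma \ref{lemContPer} plus Theorem \ref{carCompR} to identify the section with a compact subset of $\R$ --- is essentially the paper's route (the paper builds local sections homeomorphic to subsets of $\R$, saturates them, invokes the Bowen--Walters techniques to get a suspension, and concludes by Theorem \ref{carCompR}). But two load-bearing steps in your write-up are not established. First, the claim that the first-return map is the identity does not follow from item (2) of Proposition \ref{kexpsuspension}: that item only says that two points of the section which $\delta$-shadow each other under the flow for \emph{all} time must coincide; it in no way prevents an orbit from meeting the section more than once (the two intersection points are simply separated by the flow --- think of a single periodic orbit crossed twice by a section, whose return map is a $2$-cycle, a perfectly kinematic expansive situation). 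So ``distinct points of $l$ lie on distinct orbits'' is a non sequitur, and the same assertion at the local stage (``kinematic expansiveness is exactly what guarantees that \dots distinct points of $l_0$ lie on distinct orbits'') is precisely what needs proof: one must show that a section through $p$ can be \emph{chosen} so that nearby orbits, having periods close to that of $\orb(p)$, close up after a single pass and hence meet it exactly once.

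Second, your closedness/clopen argument uses Lemma \ref{lemContPer} circularly. That lemma applies to a sequence of periodic orbits already known to converge in the Hausdorff distance to a periodic orbit; in your situation you only know $\phi_{t_n}(x_n)\to z$ pointwise, and a priori the orbits through the $x_n$ could have periods tending to infinity and Hausdorff-accumulate on an invariant set strictly larger than $\orb(z)$. Ruling this out (equivalently, proving continuity of $x\mapsto\orb(x)$, hence of the period and of the return time on the section) is where kinematic expansiveness has to do real work beyond Lemma \ref{lemContPer}, and you explicitly defer it (``making this precise is the delicate part''). The same issue underlies your continuity claim for $T|_l$. With these two points open --- identity return map and orbit/period continuity --- the heart of the proposition is not yet proved; the final appeal to Theorem \ref{carCompR}, (1)$\Rightarrow$(3), is fine once they are in place, as is the (standard, Bowen--Walters style) adjustment needed so that a finite union of local sections is again a section.
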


\begin{proof}
 By Lemma \ref{lemContPer} and Theorem \ref{carCompR} we have that every point has a local cross section homeomorphic to a compact subset of $\R$. 
 Then every point admits a compact local cross section $l$ such that $\phi_\R(l)$ is an open subset of $X$. 
 With the techniques of \cite{BW} it is easy to prove that $\phi$ is a suspension. 
 Therefore we conclude by Theorem \ref{carCompR}.
\end{proof}

\subsection{Arc homeomorphisms}
\label{subsecArc}
In this section we study when a homeomorphism of a compact arc $I$ admits a kinematic expansive suspension. 
We consider homeomorphisms of class $C^0$ and $C^1$.
We say that $\phi$ is a \emph{semi-flow} if it is a continuous partial action of $\R^+$.
Recall that the $\omega$-\emph{limit set} of $x$ is 
$$\omega(x)=\{y:\exists t_k\to+\infty \hbox{ such that } \phi_{t_k}(x)\to y\hbox{ as }k\to+\infty\}.$$
\begin{lem}
\label{lemaDisco-Anillo}
 Let $\phi$ be a continuous semi-flow on an annulus $A$ such that one component of the 
 boundary is a periodic orbit $\gamma$, the other component is transversal to the flow and 
 the $\omega$-limit set of every point in $A$ is $\gamma$. 
 Then $\phi$ admits a kinematic expansive time change.
\end{lem}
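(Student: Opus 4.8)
The plan is to realize $A$ as a suspension-like object over a compact arc and apply Proposition~\ref{kexpsuspension} (more precisely, its semi-flow analogue, via item (3): a return map together with a locally injective return-time function). Let $l$ be the boundary component of $A$ transversal to $\phi$; since $l$ is an arc, it carries a natural linear order, and the return map $f\colon l'\to l$ (defined on the set $l'$ of points whose forward orbit meets $l$ again) is a homeomorphism onto its image. Because every point of $A$ has $\omega$-limit set equal to the periodic orbit $\gamma$, no orbit that leaves $l$ can return to $l$ infinitely often while staying away from $\gamma$; in fact the dynamics on $A$ looks like an attracting periodic orbit with the transversal $l$ sitting at the ``outer'' edge. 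Thus $f$ is essentially an interval homeomorphism (possibly with some points escaping toward $\gamma$ and never returning) and what we must build is a \emph{time change} $\psi$ of $\phi$ whose return-time function $\widetilde T\colon l\to\R^+$ is locally injective, i.e. strictly monotone on small subarcs.

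**The key construction.**
First I would fix the original return time $T\colon l'\to\R^+$ and note that $T$ may be badly non-injective (e.g. constant on intervals). The point is that a time change only rescales velocities along individual orbits, hence does not change the orbit structure or the return map $f$, but it \emph{does} change $T$: for a time change with speed factor $\rho>0$ along the orbit, the new return time of $x$ is $\int$ of $1/\rho$ along the orbit segment from $x$ to $f(x)$. So I want to choose $\rho$ (equivalently, the new return-time function) so that the new return time is strictly increasing on $l$ near each point. Concretely, parametrize $l$ by $[0,1]$ with $\gamma$ corresponding to one endpoint (say $0$, which the flow approaches), and prescribe $\widetilde T(x) = c(x) + $ (a strictly increasing bounded function of the coordinate of $x$), for instance $\widetilde T(x)= T_0 + x$ for a suitable constant $T_0$ large enough that the required slowdown is achievable within the geometry of $A$; then spread the corresponding velocity change continuously over the flow box between $l$ and $f(l)$, and extend by the natural scaling on the rest of $A$ so that $\psi$ remains a genuine continuous semi-flow agreeing with $\phi$ on $\gamma$. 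Having done this, items (2)–(3)-style arguments from the proof of Proposition~\ref{kexpsuspension} apply: if two points $x\neq y$ of $l$ have $\psi$-orbits that stay $\delta$-close for all $t\ge 0$, then their successive return times $\widetilde T_n(x),\widetilde T_n(y)$ must stay close, but $\widetilde T_n(x)-\widetilde T_n(y)$ accumulates (by the strict monotonicity and the fact that orbits of $A$ equidistribute toward $\gamma$, returning infinitely often to $l$) a definite drift unless $x=y$; hence $x=y$, and kinematic expansiveness follows.

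**The main obstacle and how to handle it.**
The delicate point is twofold. The minor issue is that $\phi$ is only a semi-flow and $l$ is a boundary arc, so the symmetric machinery of Proposition~\ref{kexpsuspension} (which uses $n\in\Z$) has to be replaced by its one-sided version: one checks that $\delta$-closeness for all $t\ge 0$ already forces coincidence, using that every orbit returns to $l$ infinitely often (true because $\omega(x)=\gamma$ and $l$ is a cross-section near $\gamma$) so the forward sequence $\widetilde T_n$ is infinite. The real obstacle is making the time change \emph{global and continuous} despite the possible presence of points of $l$ that flow into $\gamma$ without ever returning (the ``inner'' part of $l$ near the coordinate $0$): there the return time is infinite, so ``$\widetilde T(x)=T_0+x$'' must degenerate gracefully to $+\infty$ as $x\to 0$. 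I would handle this by choosing the speed factor $\rho$ to tend to a finite positive limit near $\gamma$ (so $\psi=\phi$ on $\gamma$ and the semi-flow stays continuous), and only modifying $\rho$ on a compact sub-box of $A$ bounded away from $\gamma$ where all the relevant return segments live; the strict monotonicity of $\widetilde T$ is then imposed only on the compact subarc of $l$ whose forward orbit returns, which is exactly the subarc that matters for contradicting kinematic expansiveness. Everything else (continuity at $\gamma$, the semi-flow axioms) is routine once the box is fixed.
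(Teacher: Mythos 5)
The central step of your construction fails. You reduce the problem to choosing a time change whose return-time function $\widetilde T$ on a cross-section is \emph{locally injective (strictly monotone)}, and you propose something like $\widetilde T(x)=T_0+x$. But local injectivity of the return time is the right criterion only when the suspended map is the identity (that is what Theorem \ref{carCompR} exploits: there $\widetilde T_n(x)-\widetilde T_n(y)=n(\widetilde T(x)-\widetilde T(y))$ grows linearly). Here the return map to any cross-section arc running into $\gamma$ is a \emph{contraction} toward the endpoint on $\gamma$ (in the paper it is normalized to $f(x)=x/2$), so all points of the section are forward asymptotic: $\dist(f^nx,f^ny)\to 0$. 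By item (3) of Proposition \ref{kexpsuspension}, expansiveness can then only come from divergence of the Birkhoff-sum differences $\sum_{n}\bigl(\widetilde T(f^nx)-\widetilde T(f^ny)\bigr)$. With $\widetilde T(x)=T_0+x$ and $f(x)=x/2$ this sum equals $\sum_n (x-y)/2^n=2(x-y)$, which is finite and moreover arbitrarily small when $x,y$ are close; more generally, for \emph{any} continuous monotone (hence bounded-variation) $\widetilde T$ the intervals $[f^nx,f^ny]$ are essentially disjoint inside $[0,y]$, so the total drift is bounded by the variation of $\widetilde T$ and tends to $0$ as $y\to x$. Hence two nearby points of the section lying on different orbits have $\psi$-orbits that remain uniformly close for all $t\ge 0$, and the time change you build is \emph{not} kinematic expansive. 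This is exactly the obstruction behind Proposition \ref{suspKexpIC1}: a Lipschitz or $C^1$ monotone return time over a contracting interval map can never produce kinematic expansiveness. There is no ``definite drift'' coming from monotonicity; your appeal to equidistribution toward $\gamma$ does not supply one.

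The paper's proof avoids this by making the return-time function \emph{oscillate with unbounded variation at a rate tuned to the contraction}: it is $1$ at the points $a_n=f^n(1)$ and $1+1/(n+1)$ at intermediate points $b_n$, extended piecewise linearly, so that on the interval $(a_{n+1},a_n)$ of length $\sim 2^{-(n+1)}$ the slope is of order $2^n/n$; this yields $T(x_n)-T(y_n)\gtrsim (y-x)/n$ and a divergent (harmonic-type) drift for every pair of distinct points, which is what actually separates forward-asymptotic orbits. A secondary point: the transversal boundary component of the annulus is a circle, not an arc, and orbits never return to it (it is the entry boundary of the semi-flow), so it cannot serve as the return section; the section must be an interior transversal arc with one endpoint on $\gamma$, which every orbit crosses infinitely often.
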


\begin{proof}
Consider a global cross section $l$, as in Figure \ref{figAnilloSemiKexp}, and identify $l$ with the interval $[0,1]$. 
The return map to $l$ is conjugated with
$f\colon [0,1]\to[0,1]$ defined by 
$f(x)=x/2$. 
Then
$f^n(x)=x/2^n$ for all $n\geq0$ and $x\in [0,1]$. 
Define $a_n=f^n(1)$ and $b_n= f^n(1/2+1/2^{n+2})$.
In this way we have that $b_n\in(a_n,a_{n+1})$ for all $n\geq 0$.
Define $T\colon [0,1]\to\R$ by $T(a_n)=T(1)=T(0)=1$ and $T(b_n)=1+1/(n+1)$ for all $n\geq 0$ and
extended by linearity in $(a_n,b_n)$ and $(b_n,a_{n+1})$ for all $n\geq 0$. See Figure \ref{figMapaT}.

\begin{figure}[htbp]
\begin{center}
\includegraphics{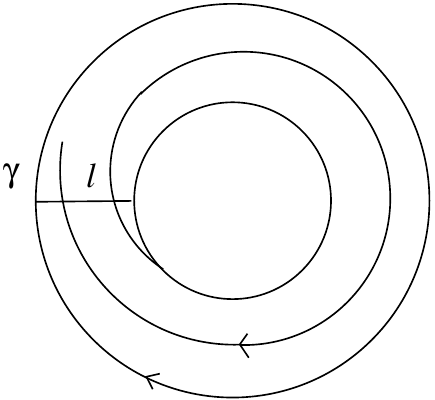}
    \caption{.}
   \label{figAnilloSemiKexp}
\end{center}
\end{figure}

Consider a semi-flow $\psi$, a time change of $\phi$ with returning time $T$ to the section $l$.
We will show that $\psi$ is kinematic expansive.
\begin{figure}[htbp]
\begin{center}
\includegraphics{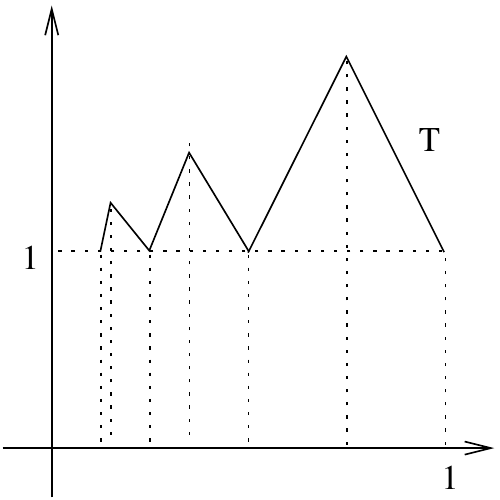}
    \caption{The return time function $T$.}
   \label{figMapaT}
\end{center}
\end{figure}
Every point in $\gamma$ is separated from any other outside $\gamma$, as can be easily seen. 
We now study two cases, taking $x,y\in l=[0,1]$, $x\neq y$.

Case 1: $a_1<x<y\leq a_0$. Notice that there exists $n_0$ such that if $n\geq n_0$ then 
$a_{n+1}<b_n<x_n<y_n<a_n$, being $x_n=f^n(x)$ and $y_n=f^n(y)$.
Then for all $n\geq n_0$ we have that: 
$$T(x_n)-T(y_n)\geq \frac{(y_n-x_n)n^{-1}}{a_{n+1}-a_n}=
\frac{\left(\frac{y-x}{2^n}\right)n^{-1}}{1/2^{n+1}}=2(y-x)/n.$$ 
And then $\sum_{i=0}^\infty T(x_i)-T(y_i)=+\infty$ and therefore the points $x,y$ are separated by the flow $\psi$.  

Case 2: $a_0< x\leq a_1<y<a_2$. 
From the definition of $T$ it is easy to see that 
$$T(x_n)=\frac{2(a_1-x)}{n(1-1/2^{n+1})}$$ 
and $$T(y_n)=\frac{2(a_2-y)}{(n+1)(1-1/2^{n+2})}.$$
Let $\alpha_n=\frac{2(a_1-x)}{1-1/2^{n+1}}$ and $\beta_n=\frac{2(a_2-y)}{1-1/2^{n+2}}$. Then 
$$T(x_n)-T(y_n)=\frac{\alpha_n}{n} - \frac{\beta_n}{n+1}=\frac{\alpha_n-\beta_n}{n+1} -\frac{\alpha_n}{n(n+1)}$$
Again we have that
$\sum_{n=0}^\infty T(x_n)-T(y_n)=+\infty$ since $\alpha_n-\beta_n\to 1/2 + 2(y-x) \neq 0$.
\end{proof}

Recall that for an arc homeomorphism preserving orientation, the periodic points are in fact fixed points, and given any closed set $F$ 
of the arc 
there is a preserving orientation homeomorphism whose set of fixed points is $F$.
If $f$ reverses orientation we have that there is a unique fixed point and other periodic points have period 2. 

\begin{prop} 
\label{arcSusp}
A homeomorphism $f\colon I\to I$ admits a kinematic expansive suspension if and only if 
the set of periodic points has finitely many components and the period function is continuous (i.e. in the reversing orientation case, 
the fixed point is not accumulated by points of period 2).
\end{prop}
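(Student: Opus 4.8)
The plan is to reduce everything to the suspension criterion already established in Proposition \ref{kexpsuspension}, item (3): the suspension of $f\colon I\to I$ under a return time $T$ is kinematic expansive iff there is $\rho>0$ such that whenever $\dist(f^n(x),f^n(y))<\rho$ and $|T_n(x)-T_n(y)|<\rho$ for all $n\in\Z$ then $x=y$, where $T_n$ denotes the cumulative time functions. So the whole problem becomes: for which $f$ does there exist a continuous positive $T$ making this criterion hold? I would treat the orientation-preserving and orientation-reversing cases separately, but both reduce to the same local picture near the fixed-point set, since (as recalled just before the statement) an orientation-reversing homeomorphism has a unique fixed point and all other periodic points have period $2$, so passing to $f^2$ we are back in the orientation-preserving situation away from that fixed point, and the parenthetical condition in the statement is exactly what controls the behaviour at it.

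For the \emph{necessity} direction ($\Rightarrow$): suppose a kinematic expansive suspension exists. First, the period function must be continuous — this is Lemma \ref{lemContPer} applied to the suspension flow, whose periodic orbits correspond to periodic points of $f$ together with their (finite) orbit periods; in the reversing case this says precisely that the fixed point cannot be a limit of period-$2$ points, since the periodic orbit through the fixed point has period $T(p)$ while nearby period-$2$ orbits have period roughly $2T$. Second, if the periodic-point set $\fix(f)$ (or $\fix(f^2)$) had infinitely many components, then these components accumulate somewhere; near an accumulation point one finds, on arbitrarily small scales, two distinct fixed points $x,y$ whose entire orbits are $x$ and $y$, and then $|T_n(x)-T_n(y)|=n|T(x)-T(y)|$ must be forced large for the criterion — but continuity of $T$ makes $|T(x)-T(y)|$ small when $\dist(x,y)$ is small, so one can find $x\neq y$ with $\dist(x,y)$ and $|T(x)-T(y)|$ both below any prescribed $\rho$ while $f^n(x)=x$, $f^n(y)=y$ stay close forever, contradicting item (3). (Here one uses that the suspension return time is a \emph{fixed} continuous function on $I$, so its oscillation is controlled.) Hence finitely many components.

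For the \emph{sufficiency} direction ($\Leftarrow$): assume $\fix(f)$ (resp. $\fix(f^2)$) has finitely many components $F_1,\dots,F_k$, each a closed subinterval or a point, and the period function is continuous. The components of $I\setminus\bigcup F_j$ are finitely many open intervals on each of which $f$ (or $f^2$) is fixed-point free, hence strictly monotone with every orbit converging to one endpoint under forward iteration and to the other under backward iteration — a wandering interval of the kind appearing in Lemma \ref{lemaDisco-Anillo}. On each such wandering interval I would construct $T$ by the same device as in Lemma \ref{lemaDisco-Anillo}: pick a fundamental domain, place a sequence of ``bumps'' of heights $1+1/(n+1)$ at carefully chosen points $b_n$ between consecutive images $a_n,a_{n+1}$ of the boundary of the domain, interpolate linearly, and match the values $1$ on the boundary fixed components so the pieces glue to a single continuous $T\colon I\to\R^+$ bounded away from $0$. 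The computation in Lemma \ref{lemaDisco-Anillo} (Cases 1 and 2) shows that for any two distinct points $x,y$ in the wandering interval the cumulative discrepancy $\sum_n (T_n(x)-T_n(y))$ — which is essentially $\sum_n(T(f^n x)-T(f^n y))$ — diverges, hence $|T_n(x)-T_n(y)|$ is eventually larger than any $\rho$, so the only way the criterion of Proposition \ref{kexpsuspension}(3) can fail for this $T$ is $x=y$. Two points in distinct wandering intervals separated by a fixed component get separated immediately in space, and a point in $\fix(f)$ is separated from any other point because its orbit is constant while the other orbit is not. Continuity of the period function is exactly what is needed to make the linear interpolation near the endpoints of the wandering intervals glue consistently with the fixed values there (this is where the reversing-orientation parenthetical is used).

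The main obstacle I anticipate is the gluing in the sufficiency direction when a component $F_j$ is a nondegenerate closed interval abutting two wandering intervals on its two sides, or when infinitely many wandering intervals accumulate onto one endpoint of a single fixed component (which is allowed even with finitely many \emph{components} of $\fix(f)$): one must choose the bump positions $b_n$ on each wandering interval so that $T$ remains continuous across the accumulation and still bounded away from $0$ and $\infty$, and verify that the divergence estimates of Lemma \ref{lemaDisco-Anillo} survive uniformly. Everything else is a routine adaptation of Proposition \ref{kexpsuspension} and Lemma \ref{lemaDisco-Anillo}.
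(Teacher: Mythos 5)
Your reduction to Proposition \ref{kexpsuspension}(3) is the right framework, but both directions contain genuine gaps. In the necessity direction, your argument for ``finitely many components'' does not work: for two distinct \emph{fixed} points $x\neq y$ one has $|T_n(x)-T_n(y)|=|n|\,|T(x)-T(y)|$, which diverges whenever $T(x)\neq T(y)$, no matter how small that difference is; so the hypothesis of item (3) simply fails at some $n$ and no contradiction is obtained. (If your argument were valid, the identity map --- whose fixed-point set is a single component but contains arbitrarily close fixed points --- would admit no kinematic expansive suspension, contradicting Example \ref{annulus} and Theorem \ref{carCompR}.) The true obstruction caused by infinitely many components comes from the complementary wandering gaps: there are then infinitely many of them, their lengths tend to $0$, each gap is invariant, and for a wandering point $x$ in a tiny gap the pair $x$, $y=f(x)$ satisfies $\dist(f^n(x),f^n(y))\leq\diam(\mathrm{gap})$ for all $n$ together with the telescoping identity $T_n(x)-T_n(y)=T(x)-T(f^n(x))$, which is bounded by the oscillation of $T$ on the gap and hence is $<\rho$ by uniform continuity; this is the pair violating item (3), and it is how the paper argues.

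In the sufficiency direction, prescribing $T\equiv 1$ on the fixed components is wrong as soon as a component of the fixed-point set is a nondegenerate interval (already for $f=\mathrm{id}$): two distinct fixed points with equal return times have $T_n(x)=T_n(y)$ for all $n$ and remain $\rho$-close forever, so they are never separated; your claim that a fixed point is separated from any other point ``because its orbit is constant while the other is not'' fails exactly for pairs of fixed points. The paper instead takes a strictly \emph{increasing} time map on each fixed component (as in the periodic band, Example \ref{annulus}), so that $T_n(x)-T_n(y)=n(T(x)-T(y))$ diverges there, and uses the construction of Lemma \ref{lemaDisco-Anillo} only on the wandering gaps. Note also that the gluing difficulty you anticipate (infinitely many wandering intervals accumulating on a fixed component) cannot occur: finitely many components of the fixed-point set leave only finitely many complementary gaps. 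The part of necessity concerning continuity of the period function (via Lemma \ref{lemContPer}, or directly with the pair $x$, $f(x)$ for period-two points near the fixed point in the orientation-reversing case) is fine and matches the paper.
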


\begin{proof}
 ($\Rightarrow$)
 Let us start assuming that $f$ admits a kinematic expansive suspension.
 Suppose first that $f$ reverses orientation. 
 As we said $f$ has a unique fixed point $p$. 
 Now, it is easy to see that $x$ and $f(x)$ contradicts expansiveness if $x$ is a periodic point (of period 2) arbitrarily close to $p$.
Assume now that there are infinitely many wandering components. 
We have that for all $\epsilon>0$ there is a wandering point $x$ such that $\dist(x,f^n(x))<\epsilon$ for all 
$n\in\Z$. 
Consider a time map $T\colon I\to \R^+$. 
Since it is uniformly continuous we have that for all $\delta>0$, the value of $\epsilon$ can be chosen in such a way that 
if $\dist(x,y)<\epsilon$ then $|T(x)-T(y)|<\delta$. Therefore, the points $x$ and $f(x)$ contradicts the expansiveness.

 ($\Leftarrow$) On each component of fixed points consider an increasing time map. 
 On wandering points use Lemma \ref{lemaDisco-Anillo}.
\end{proof}

The smooth case is very restrictive as the following result shows.

\begin{prop}
\label{suspKexpIC1}
 Assume that $f\colon I\to I$ is a homeomorphism and $T\colon I\to\R^+$ is $C^1$. 
 If the suspension $(f,T)$ is kinematic expansive then $f$ is the identity and $T$ is strictly increasing or decreasing.
\end{prop}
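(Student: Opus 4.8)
The plan is to use the characterization in Proposition \ref{kexpsuspension} (item 3): the suspension $(f,T)$ is kinematic expansive iff there is $\rho>0$ such that whenever $x,y\in I$ satisfy $\dist(f^n(x),f^n(y))<\rho$ and $|T_n(x)-T_n(y)|<\rho$ for all $n\in\Z$, then $x=y$; here $T_n(x)=\sum_{k=0}^{n-1}T(f^k(x))$ for $n\geq 0$ and the obvious modification for $n<0$. The strategy is to \emph{contradict} this whenever $f$ is not the identity, by producing a pair $x\neq y$ on the same orbit, or two points that the $C^1$ regularity of $T$ forces to have bounded Birkhoff sums of the ``clock difference''.

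First I would dispose of the reversing-orientation case and the presence of nontrivial periodic points: by Proposition \ref{arcSusp} a kinematic expansive suspension forces the period function to be continuous, so in the reversing case the unique fixed point is not accumulated by period-2 points, and in either case the set $\fix(f)$ (or $\fix(f^2)$) is a finite union of closed intervals. The heart of the argument is to show that if $f$ has a wandering point then $C^1$-regularity of $T$ is violated by kinematic expansiveness. So assume $f\neq \mathrm{id}$ and (after replacing $f$ by $f^2$ and adjusting $T$, which stays $C^1$) that $f$ preserves orientation; pick a wandering interval-component $(a,b)$ with $f(a)=a$, $f(b)=b$ and $f(x)>x$ (say) on $(a,b)$, so $f^n(x)\to b$ and $f^{-n}(x)\to a$ as $n\to+\infty$. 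The key computation is to estimate, for $x<y$ both close to $a$ (or both close to $b$), the telescoping difference
\[
 T_n(x)-T_n(y)=\sum_{k=0}^{n-1}\bigl(T(f^k(x))-T(f^k(y))\bigr).
\]
Using the mean value theorem, $T(f^k(x))-T(f^k(y)) = T'(\xi_k)\,(f^k(y)-f^k(x))$ for some $\xi_k$ between $f^k(x)$ and $f^k(y)$; since $f^k(x),f^k(y)\to a$, and $T'$ is continuous, $T'(\xi_k)\to T'(a)$. Now I would split into two cases according to whether $T'(a)=0$ or $T'(a)\neq 0$ (and symmetrically at $b$). If $T'(a)\neq 0$ for the relevant endpoint, the terms $T(f^k(x))-T(f^k(y))$ have, for large $k$, a definite sign and are comparable to $f^k(y)-f^k(x)$; one then shows the series $\sum_k (f^k(y)-f^k(x))$ diverges — this is where one uses that $f^k(x),f^k(y)$ both converge to the \emph{same} fixed point $a$ while staying distinct, so the increments $f^k(y)-f^k(x)$ cannot be summable without the orbit being eventually periodic — hence $|T_n(x)-T_n(y)|\to\infty$, so $x,y$ are separated by the flow; but taking $x,y$ close enough to $a$ makes $\dist(f^n(x),f^n(y))$ and the finite-time clock differences as small as we like, contradicting kinematic expansiveness is \emph{not} what we want — rather, this shows $x,y$ \emph{are} separated, so it is consistent. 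The real contradiction must come from the other side: choose $x\in(a,b)$ and $y=f(x)$, so they are on the same orbit but, for the suspension to be kinematic expansive, they must lie in a small orbit segment. Then $T_n(x)-T_n(y)=T_n(x)-T_{n-1}(f(x))$ telescopes to $T(x)-T(f^n(x))$, which is bounded (indeed $\to T(x)-T(b)$), and $\dist(f^n(x),f^n(y))\to 0$; so $x$ and $y$ satisfy the hypothesis of Proposition \ref{kexpsuspension}(3) for \emph{every} $\rho$, forcing $x=f(x)$, i.e. $x$ is fixed — contradicting that $(a,b)$ is wandering. Hence $f$ has no wandering points and $\fix(f)=I$, i.e. $f=\mathrm{id}$; then item (3) of Proposition \ref{kexpsuspension} reduces to: $|T_n(x)-T_n(y)|=|nT(x)-nT(y)|\geq\rho$ for some $n$ whenever $x\neq y$ are close, which forces $T(x)\neq T(y)$ locally, and a $C^1$ locally injective function on an interval is strictly monotone.

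The main obstacle I anticipate is the divergence claim for $\sum_k(f^k(y)-f^k(x))$ — i.e. showing that two distinct forward orbits converging to a common fixed point of a mere homeomorphism cannot approach each other summably fast — and, more importantly, making sure the same-orbit trick ($y=f(x)$) genuinely applies: one must check that ``$x,y$ in a small orbit segment'' in the suspension translates, via Proposition \ref{kexpsuspension}, to ``$x=y$ as points of $l$'', which it does because consecutive passages through $l$ are separated by a definite time $\min T>0$, so the only way $x$ and $f(x)$ lie in an orbit segment of small diameter (equivalently small time, since the flow is nonsingular) is if $x=f(x)$. Once that reduction is clean, the $C^1$ hypothesis is used only at the very end (to upgrade ``$T$ locally injective'' to ``$T$ strictly monotone'') and, possibly, in handling a $T'(a)=0$ subcase — but the same-orbit argument sidesteps the need for any delicate derivative estimates, so I expect the proof to be short once the case $f=\mathrm{id}$ is isolated.
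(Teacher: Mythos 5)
The decisive step of your proposal --- taking $y=f(x)$ in a wandering interval $(a,b)$ and claiming this pair satisfies the hypothesis of Proposition \ref{kexpsuspension}(3) for every $\rho$ --- fails. That hypothesis requires $\dist(f^n(x),f^n(y))<\rho$ for \emph{all} $n\in\Z$, not only for large $|n|$. With $y=f(x)$ one has $\dist(f^n(x),f^n(y))=f^{n+1}(x)-f^n(x)$, the length of the $n$-th fundamental domain of the orbit of $x$, and these domains tile all of $(a,b)$. Fix an interior point $c\in(a,b)$: for every $x$ there is an $n$ with $c\in[f^n(x),f^{n+1}(x)]$, and then $f^{n+1}(x)-f^n(x)\geq\min_{u\in[f^{-1}(c),c]}(f(u)-u)=:\rho_0>0$, a constant independent of $x$. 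So $(x,f(x))$ never satisfies the hypothesis once $\rho<\rho_0$ (equivalently: the flow \emph{does} separate $x$ from $f(x)$ when the orbit crosses the middle of the wandering band), and no contradiction is obtained. A telling symptom is that this same-orbit argument never uses the $C^1$ hypothesis on $T$; if it were valid it would show that no homeomorphism with wandering points admits \emph{any} kinematic expansive suspension, contradicting Proposition \ref{arcSusp} and Lemma \ref{lemaDisco-Anillo}, which produce exactly such suspensions with continuous $T$. (The divergence claim in your discarded first branch is also false, for the same disjointness reason: if $x<y<f(x)$, the intervals $[f^k(x),f^k(y)]$, $k\in\Z$, are pairwise disjoint subsets of $(a,b)$, so $\sum_k\bigl(f^k(y)-f^k(x)\bigr)\leq b-a<\infty$.)

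The paper's argument uses that disjointness in the opposite direction, and this is precisely where $C^1$ (in fact Lipschitz) regularity enters. Take $x<y$ in $(a,b)$ on \emph{different} orbits with $y$ close to $x$ (so $y<f(x)$). Since $T(v)-T(u)=\int_u^v T'(s)\,ds$, for every $n$ one gets $|T_n(x)-T_n(y)|\leq\|T'\|_\infty\sum_{k\in\Z}\bigl(f^k(y)-f^k(x)\bigr)$, and the right-hand side tends to $0$ as $y\downarrow x$ because each term does and is dominated by $f^{k+1}(x)-f^k(x)$, whose sum is $b-a$; likewise $\sup_k\bigl(f^k(y)-f^k(x)\bigr)<\rho$ once $y$ is close enough to $x$, since only finitely many fundamental domains have length $\geq\rho$. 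Thus for every $\rho>0$ there are distinct $x,y$ violating item (3): wandering intervals are impossible. The orientation-reversing case is then handled directly at the fixed point $p$ (nearby points are either wandering, treated as above, or of period two, in which case $x$ and $f(x)$ really do stay close and contradict expansiveness); note also that your reduction ``replace $f$ by $f^2$'' does not obviously preserve the hypothesis, since the new return time $T+T\circ f$ need not be $C^1$ when $f$ is merely a homeomorphism. Your final step ($f=\mathrm{id}$ plus item (3) forces local injectivity, hence strict monotonicity, of $T$) is correct.
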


\begin{proof}
 Let us assume first that $f$ is increasing.
 By contradiction assume that it is not the identity, 
 therefore there are two fixed points $p,q\in I$ 
 such that for all $x\in(p,q)$ we have that $f^n(x)\to q$ and $f^{-n}(x)\to p$ as $n\to\infty$.
Since $T$ is smooth we have that 
$T(y)-T(x)=\int_x^y T'(u)\,du$. 
Therefore, taking $x,y\in(p,q)$ arbitrarily close we can easily contradict Proposition \ref{kexpsuspension}. 

Assume now that $f$ is
decreasing and take the fixed point $p$ of $f$. 
If close to $p$ there are wandering points then we can arrive to a contradiction as in the previous case. 
The other possible case is that every point close to $p$ is periodic with period 2. 
If $x$ is close to $p$ and $y=f(x)$ it is easy to see that $x,y$ contradicts the expansiveness of the suspension flow. 
This contradiction proves that $f(x)=x$ for all $x\in I$. 

Now applying Proposition \ref{kexpsuspension} we see that $T$ must be increasing or decreasing.
\end{proof}

\subsection{Circle homeomorphisms}

Let $f\colon S^1\to S^1$ be homeomorphism of the circle. 
Recall that if the are no wandering points then it is conjugated to a rotation. 
In other case we say that the wandering set of $f$ is \emph{finitely generated} if 
there is a finite number of disjoint open arcs $a_1,\dots,a_n$ such that the wandering set is the union 
$$
\bigcup_{j\in\Z,i=1,\dots,n} f^j(a_i).
$$

In the following Theorem we exclude the case where $f$ is minimal because we have no $C^0$ general answer.

\begin{thm}
\label{teoSusCir}
 A non-minimal circle homeomorphism $f\colon S^1\to S^1$ preserving orientation 
 admits a kinematic expansive suspension if and only if 
 its wandering set is non-empty and finitely generated.
\end{thm}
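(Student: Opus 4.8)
The plan is to prove the two implications separately, relating the dynamics of $f$ to the kinematic expansiveness of its suspension via Proposition \ref{kexpsuspension} (item 3), which characterizes kinematic expansiveness of the suspension of $f$ under a return time $T$ by the condition that the pair of sequences $(f^n(x),f^n(y))$ and $(T_n(x)-T_n(y))$ cannot both stay small for all $n\in\Z$ unless $x=y$.

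\textbf{Necessity.} Suppose $f$ is non-minimal, orientation preserving, and admits a kinematic expansive suspension with return time $T$. First I would note that the wandering set is non-empty: if $f$ has no wandering points then, being orientation preserving on $S^1$, it is either minimal (excluded) or has periodic points, and periodic points of an orientation preserving circle map are fixed points of $f^q$ for the common period $q$; one then finds pairs $x,y$ on the same orbit, arbitrarily close and with $T_n(x)-T_n(y)$ arbitrarily small for all $n$ (the period function $T_q=\sum_{i=0}^{q-1}T\circ f^i$ is continuous), contradicting Proposition \ref{kexpsuspension}. So the wandering set is non-empty. To see it is finitely generated, I would argue that if the wandering set has infinitely many orbit-classes of wandering intervals then, using uniform continuity of $T$ exactly as in the proof of Proposition \ref{arcSusp}, one produces a single wandering point $x$ with $\dist(f^n(x),f^n(fx))<\epsilon$ and $|T_n(x)-T_n(fx)|<\delta$ for all $n$, since the orbit of a wandering interval has diameters of its iterates going to zero; this contradicts kinematic expansiveness applied to $x$ and $f(x)$, which lie on the same orbit but not in a short orbit segment.

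\textbf{Sufficiency.} Conversely, suppose the wandering set is non-empty and finitely generated, say generated by disjoint open arcs $a_1,\dots,a_k$. Then $f$ is semiconjugate to an irrational rotation or, in the rational rotation number case, the non-wandering set is a finite union of periodic orbits, and the gaps are finitely many orbit-classes of wandering intervals; in either case $S^1$ decomposes into finitely many ``pieces'' — periodic orbits (or a Cantor set / the whole circle on which $f$ acts like a rotation) and finitely many families of wandering arcs. On a neighborhood of the rotation-type part there are no wandering points, so the argument from the end of Theorem \ref{skexpsurf} and Proposition \ref{kexpsuspension} applies to choose $T$ (roughly constant there) making nearby non-coincident points separate. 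On each wandering arc, the relevant local model is the annular semi-flow with all $\omega$-limits equal to the boundary periodic orbit, so I would invoke Lemma \ref{lemaDisco-Anillo} to build a return time on the closure of $\cup_{j}f^j(a_i)$ that forces separation of points inside it. Since there are finitely many such families, these local choices of $T$ can be patched into a single continuous positive return time on $S^1$, and one checks via Proposition \ref{kexpsuspension} item 3 that the global $T$ works: any two distinct points $x,y\in S^1$ either lie (eventually) in the rotation part, where the periods differ, or visit a wandering family, where Lemma \ref{lemaDisco-Anillo} gives $\sum |T_n(x)-T_n(y)|=\infty$.

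\textbf{Main obstacle.} The delicate point is the sufficiency direction: gluing the local return-time constructions — the ``rotation-part'' choice and the finitely many Lemma \ref{lemaDisco-Anillo} choices on the wandering families — into one globally continuous $T$ on $S^1$, and then verifying that a pair of points straddling the interface (one orbit accumulating on the rotation part, another spending time in a wandering gap) is still separated. I expect this to require a careful case analysis of where the forward and backward orbits of $x$ and $y$ spend their time, combined with the observation that the wandering set is open and its complement is $f$-invariant, so an orbit is eventually captured by one regime; the quantitative separation estimates of Lemma \ref{lemaDisco-Anillo} and the continuity of the period function on the rotation part then finish each case.
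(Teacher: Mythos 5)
The sufficiency direction has a genuine gap, and it is exactly at the point you flag as the ``main obstacle.'' Your plan — decompose $S^1$ into a rotation-type part plus finitely many wandering families, use Lemma \ref{lemaDisco-Anillo} as the local model on each family, and patch the return times — only fits the rational rotation number picture. In the principal case covered by the theorem, a Denjoy-type $f$ with irrational rotation number, there are no periodic orbits at all: the iterates $f^n(a_i)$ of a wandering arc do not accumulate on a boundary periodic orbit (the $\omega$-limit of a wandering point is the whole Cantor set $\Omega$), so the hypotheses of Lemma \ref{lemaDisco-Anillo} are never satisfied; moreover the wandering intervals are interleaved densely with $\Omega$, so every neighborhood of the ``rotation part'' contains all but finitely many of them and the proposed patching cannot even be set up as a decomposition into separate regimes. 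The paper's proof replaces this by an explicit global construction: with the wandering set generated by one arc $I=(a,b)$ (and after normalizing $\dist(f^nx,f^ny)=\dist(x,y)/2^n$ on $I$), it sets $T\equiv 1$ on $\Omega$ and on $\bigcup_{n\ge0}f^{-n}(I)$, puts a bump of height $1+1/n$ at a point $z_n\in f^n(I)$ chosen so that $f^{-n}(z_n)\to a$, and extends linearly on each $f^n(I)$, $n>0$. Continuity at $\Omega$ holds because the bumps decay; separation of pairs in $\Omega$ is inherited from the known expansiveness of $f|_\Omega$; and separation of pairs in $\clos(I)$ — including a point of $I$ against the endpoint $a\in\Omega$, the ``interface'' pair you worry about — comes from explicit divergent-series estimates on $\sum_n\bigl(T(f^nx)-T(f^ny)\bigr)$. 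This choice of $T$ and the case analysis via item (3) of Proposition \ref{kexpsuspension} is the missing key idea; without it the sufficiency half is not proved.

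Two smaller problems occur in your necessity argument. First, in the no-wandering-points subcase, continuity of the period function $T_q=\sum_{i=0}^{q-1}T\circ f^i$ is not enough: if $T_q(x)\neq T_q(y)$ then $T_{kq}(x)-T_{kq}(y)=k\bigl(T_q(x)-T_q(y)\bigr)$ diverges and the pair does separate (slowly, as in the periodic band). One needs arbitrarily close \emph{distinct} points with exactly equal flow period; these exist because a continuous real-valued function on $S^1$ cannot be locally injective (the mechanism behind item (2) of Theorem \ref{carCompR}). Such pairs should not be taken on one orbit: two distinct points of a single periodic orbit keep all their iterates a definite distance apart (bounded below in terms of the period), so ``pairs on the same orbit'' is the wrong choice there. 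Second, in showing the wandering set is finitely generated, the pair $x$, $f(x)$ does not work on the circle: $f^n(x)$ and $f^{n+1}(x)$ lie in \emph{different} wandering intervals, so smallness of the diameters of the iterates of one interval does not make $\dist(f^n(x),f^n(f(x)))$ small. In the rational case one should instead use $x$ and $f^q(x)$, which do share a small wandering interval and whose cumulative time differences telescope to $T_q(f^nx)-T_q(x)$, controlled by uniform continuity — this is the correct adaptation of the Proposition \ref{arcSusp} argument that the paper invokes.
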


\begin{proof}
($\Rightarrow$) Assume that $f$ admits a kinematic expansive suspension.
If $f$ has no wandering points then it is a rotation, and since it is not minimal, it is a periodic (rational) rotation. 
Now it is easy to see that there are arbitrarily close points with the same period (for the flow) contradicting expansiveness. 
Therefore the wandering set is not empty.
The wandering set is finitely generated by the arguments in the proof of Proposition \ref{arcSusp}.

($\Leftarrow$)
Now assume that the wandering set is generated by one interval (it is easy to extend the proof to the general case).
It is known that
$f\colon \Omega\to\Omega$ is an expansive homeomorphism, where $\Omega$ denotes the non-wandering set of $f$.
Assume that the wandering set is the disjoint union $\cup_{n\in\Z} f^n(I)$ where $I=(a,b)$ is an open arc. 
Without loss of generality we will assume that 
\begin{equation}
 \label{denjoydist}
 \dist(f^n(x),f^n(y))=\frac{\dist(x,y)}{2^n}
\end{equation}
for all $x,y\in I$ and $n\geq 0$. 
For each $n\geq 0$ take a point $z_n\in f^n(I)$ such that 
$f^{-n}(z_n)\to a$. 

Define a continuous map $T\colon S^1\to \R^+$, the return time function, as 
$T(x)=1$ if $x\in\Omega$ or $x\in \cup_{n\geq 0} f^{-n}(I)$, 
$T(z_n)=1+1/n$ for all $n> 0$ and extend $T$ linearly on each $f^n(I)$ with $n> 0$.

We claim that the flow on the torus with return map $f$ and return time $T$, defined above, is kinematic expansive.
 To prove kinematic expansiveness we will use item (3) of Proposition \ref{kexpsuspension}. 
 We know that $f\colon\Omega\to\Omega$ is an expansive homeomorphism. 
 It is easy to see that if $x\in I$ and $y\notin \clos(I)$ then $x,y$ are separated by $f$. 
 It only rests to consider $x,y\in\clos(I)$. 
 We divide the proof in two cases. 
 
 First suppose that $x,y\in I$. In the arc $I$ we consider an order such that $a<b$ and using the homeomorphism $f$ we induce an 
 order on each $f^n(I)$ with $n\in\Z$. Assume that $x<y$.
 Recall that the sequence $z_n\in f^n(I)$ 
 used to define the return time $T$ has the property $f^{-n}(z_n)\to a$. Therefore there is $n_0\geq 0$ such that 
 $z_n<f^n(x)<f^n(y)$ for all $n\geq n_0$. 
 Let us introduce the notation $x_n=f^{n+n_0}(x)$ and $y_n=f^{n+n_0}(y)$. 
 By the definition of $T$ (recall that it was extended linearly) and equation (\ref{denjoydist}) we have that 
 \[
 \begin{array}{ll}
  T(x_n)-T(y_n) & \displaystyle\geq \frac{\dist(x_n,y_n)}{(n_0+n)\dist(f^{n+n_0}(a),f^{n+n_0}(b))}\\
   & \displaystyle= \frac{\dist(x_{n_0},y_{n_0})}{(n_0+n)\dist(f^{n_0}(a),f^{n_0}(b))}
 \end{array}
 \]
for all $n\geq 0$. Then 
\[
 \sum_{n\geq 0} T(f^n(x))-T(f^n(y))=\infty.
\]
 
 Now assume that $x=a$ (a extreme point of $I$) and $y\in I$. 
 Assume that $z_n<f^n(y)$ for all $n\geq n_0$.
 As before it can be proved that 
 \[
  T(y_n)-T(x_n)\geq \frac{1}{n}\frac{\dist(f^{n_0}(b),f^{n_0}(y))}{\dist(a,b)}.
 \]
 And we arrive again to a divergent series. 
 The case $y=b$ is similar to this case. 
 This proves that the flow is kinematic expansive.
\end{proof}

\begin{thm}
 A reversing orientation homeomorphism $f\colon S^1\to S^1$ admits a kinematic expansive suspension if 
 and only if it has wandering points, fixed points are not accumulated by periodic points and the wandering set has 
 a finite number of components.
\end{thm}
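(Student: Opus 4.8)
The plan is to prove both implications by adapting the arguments of Proposition~\ref{arcSusp} and Theorem~\ref{teoSusCir}, using throughout item~(3) of Proposition~\ref{kexpsuspension}. Two structural facts come first. Since $f$ reverses orientation it has a fixed point, so $f^2$ is an orientation-preserving homeomorphism with a fixed point, hence with rotation number $0$; in particular if $f^2$ has no wandering points it is conjugate to a rotation, that rotation is trivial, and so $f^2=\mathrm{id}$. Writing $P_2$ for the set of points of period $2$, the set $\mathrm{Fix}(f)$ is closed, each component of its complement is mapped by $f$ onto a \emph{different} complementary component (an orientation-reversing self-homeomorphism of an arc would have an interior fixed point), and — using that $f^2$ has rotation number $0$, so its complementary arcs are wandering and $f$ moves each off itself — the non-wandering set of $f$ equals $\mathrm{Fix}(f^2)=\mathrm{Fix}(f)\sqcup P_2$, with $f$ fixing $\mathrm{Fix}(f)$ pointwise and restricting to $P_2$ as a fixed-point-free involution.

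\smallskip\noindent\emph{Necessity.} Let $(f,T)$ be a kinematic expansive suspension. If some $p\in\mathrm{Fix}(f)$ were accumulated by points $x_n\in P_2$, then $f(x_n)\to f(p)=p$, the orbit segments $\phi_{[0,T(x_n)]}(x_n)$ would converge to the periodic orbit through $p$, hence $\sup_t\dist(\phi_t(x_n),\phi_t(f(x_n)))\to 0$, while $f(x_n)=\phi_{s_n}(x_n)$ with $|s_n|\ge\min\{T(x_n),T(f(x_n))\}\to T(p)>0$: a contradiction. So fixed points are not accumulated by period-$2$ points. If $f$ had no wandering points then $f^2=\mathrm{id}$, so $\mathrm{Fix}(f)$ would be proper and its complementary arcs would consist of period-$2$ points, and an endpoint of such an arc would be a fixed point accumulated by period-$2$ points — just excluded. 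Finally, if the wandering set were not finitely generated, its generating arcs would have orbits of arbitrarily small total length, and, exactly as in Proposition~\ref{arcSusp}, uniform continuity of $T$ would give, for every $\varepsilon>0$, a wandering point $x$ with $\dist(f^n(x),x)<\varepsilon$ and $|T_n(x)-T_n(f(x))|<\varepsilon$ for all $n$, so $x$ and $f(x)$ would contradict item~(3) of Proposition~\ref{kexpsuspension}.

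\smallskip\noindent\emph{Sufficiency and the main obstacle.} Assume $f$ has wandering points, $\mathrm{Fix}(f)$ is not accumulated by $P_2$, and the wandering set is finitely generated; then $\mathrm{Fix}(f)$, $P_2$ and $\mathrm{Fix}(f^2)$ are proper closed subsets of $S^1$ and hence embed in $\R$. I would build $T\colon S^1\to\R^+$ piece by piece: on $\mathrm{Fix}(f)$ take $T$ locally injective (possible by Theorem~\ref{carCompR}, since the restricted flow is a suspension of the identity of a subset of $\R$); on $P_2$ choose $T$ so that the period function $P(x)=T(x)+T(f(x))$ is locally injective on the quotient $P_2/f$, which is itself a compact subset of $\R$ because $f|_{P_2}$ is a fixed-point-free involution and the quotient map is a $2$-to-$1$ local homeomorphism; and on each of the finitely many orbits of generating arcs of the wandering set use Lemma~\ref{lemaDisco-Anillo}, whose annular semi-flow model is exactly the picture of a wandering component spiralling onto the periodic part. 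Gluing these choices, one checks item~(3) of Proposition~\ref{kexpsuspension} by the usual case analysis: points of $\mathrm{Fix}(f)$ are separated by local injectivity of $T$; points of $P_2$ by local injectivity of $P$ on $P_2/f$ together with the lower bound $\dist(x,f(x))\ge c>0$; points in the wandering set by Lemma~\ref{lemaDisco-Anillo}; and orbits in different pieces because one stays near the periodic set while the other escapes. I expect the real work to lie in the treatment of $P_2$, the genuinely new feature compared with the orientation-preserving case of Theorem~\ref{teoSusCir}: constructing $T|_{P_2}$ so that the induced period function separates orbits locally, and ruling out any resonance between a period-$2$ orbit and the wandering dynamics accumulating on it — which is precisely where the hypothesis that fixed points are not accumulated by periodic points is used, and also where one must pin down ``finitely many components'' as meaning finitely generated in the sense of the preceding subsection.
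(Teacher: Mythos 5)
Your necessity argument for the finiteness condition contains a step that fails for orientation-reversing maps. You claim that if the wandering set is not finitely generated there is a wandering point $x$ with $\dist(f^n(x),x)<\varepsilon$ and $|T_n(x)-T_n(f(x))|<\varepsilon$ for all $n$, and you test the pair $(x,f(x))$ against item (3) of Proposition \ref{kexpsuspension}. But a wandering component $J$ is $f^2$-invariant while its partner $f(J)$ may be far from $J$ (for instance, if small components accumulate at a point $p$ of period two, their partners accumulate at $f(p)\neq p$); so for odd $n$ the iterate $f^n(x)$ lies in $f(J)$, and neither $\dist\bigl(f^n(x),f^{n+1}(x)\bigr)$ nor $|T_n(x)-T_n(f(x))|=|T(f^n(x))-T(x)|$ need be small. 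Having small \emph{total length} of the orbit $J\cup f(J)$ does not give small diameter of that (disconnected) set. The conclusion survives with a different pair: since the diameters of the components tend to $0$ and components pair up under $f$, choose $J$ with both $\diam(J)$ and $\diam(f(J))$ small and test $x$ against $f^2(x)$ (both in $J$); then $\dist\bigl(f^n(x),f^n(f^2(x))\bigr)$ is bounded by the two diameters and $T_n(f^2(x))-T_n(x)$ telescopes to $T(f^{n+1}(x))+T(f^n(x))-T(f(x))-T(x)$, which is small by uniform continuity of $T$. So this part is repairable, but as written the step is wrong. (Your argument that fixed points cannot be accumulated by period-two orbits is fine; it is essentially Lemma \ref{lemContPer}.)

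On the sufficiency side your text is a plan rather than a proof: the construction of $T$ on the period-two set and, above all, the verification that a wandering orbit accumulating on a small period-two orbit, and two nearby distinct period-two orbits, are actually separated (the ``resonance'' you mention) are exactly the nontrivial points, and you leave them as expectations. You also never use the fact that drives the paper's argument: an orientation-reversing circle homeomorphism has \emph{exactly two} fixed points (your treatment of $\mathrm{Fix}(f)$ as a possibly large closed set, with an appeal to Theorem \ref{carCompR} to choose $T$ on it, is vacuous generality). With the two fixed points in hand, the paper simply cuts the circle at them, so the dynamics reduces to an interval homeomorphism, and Proposition \ref{arcSusp} — whose hypotheses (finitely many components of the periodic set and continuity of the period function, i.e.\ the fixed point not accumulated by period-two points) are precisely the conditions of the theorem — supplies both directions, including the wandering-part construction via Lemma \ref{lemaDisco-Anillo}. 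If you keep your direct route you must carry out the case analysis for item (3) of Proposition \ref{kexpsuspension} in full; otherwise the two-fixed-point reduction is the efficient way to close the argument.
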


\begin{proof}
 Since $f$ reverses orientation it has two fixed points. 
 The dynamics is then reduced to an interval homeomorphism and we can apply Proposition \ref{arcSusp} to conclude the proof.
\end{proof}

\subsection{Smooth suspensions of circle diffeomorphisms}
In this section we apply the results of \cite{AvilaKocsard} to 
study smooth kinematic expansive suspensions of irrational rotations.

\begin{thm}\label{NoIrrK-exp}
No suspension of an irrational rotation $f\colon S^1\to S^1$ 
with $C^1$ return time function $T\colon S^1\to\R^+$ is kinematic expansive.
\end{thm}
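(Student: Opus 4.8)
The plan is to reduce the statement to the dynamical characterization of kinematic expansive suspensions in item (3) of Proposition~\ref{kexpsuspension}, to exploit that an irrational rotation acts by isometries, and then to feed the cohomological information about the rotation supplied by \cite{AvilaKocsard} into that characterization.

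Take the global section to be $l=S^1$ with return map $f(z)=z+\alpha$, so that $T_n(z)=\sum_{k=0}^{n-1}T(z+k\alpha)$ for $n\geq 0$, with the analogous expression for $n<0$. If $x=y+t$ then
\[
 T_n(x)-T_n(y)=\sum_{k=0}^{n-1}\bigl(T(y+k\alpha+t)-T(y+k\alpha)\bigr),
\]
so that $T_n(x)-T_n(y)$ is a Birkhoff sum over $f$ of the function $g_t(z):=T(z+t)-T(z)$, which is $C^1$, has zero Lebesgue mean, and satisfies $\|g_t\|_{C^1}\to 0$ as $t\to 0$. Since $f$ is an isometry we have $\dist(f^n(x),f^n(y))=\dist(x,y)$ for all $n$, so by Proposition~\ref{kexpsuspension}(3) the theorem is equivalent to the assertion that for every $\rho>0$ there exist $y\in S^1$ and $t$ with $0<|t|<\rho$ and $\bigl|\sum_{k=0}^{n-1}g_t(y+k\alpha)\bigr|<\rho$ for all $n\in\Z$.

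Two sub-cases of this are elementary. If $T$ is cohomologous to a constant over $f$ through a \emph{continuous} transfer function $u$, i.e. $T-\int T=u-u\circ f$, then $T_n(z)=n\int T+u(z)-u(z+n\alpha)$ for every $n\in\Z$, hence $T_n(x)-T_n(y)=\bigl(u(x)-u(y)\bigr)-\bigl(u(x+n\alpha)-u(y+n\alpha)\bigr)$; both differences are at most $\omega_u(\dist(x,y))$, where $\omega_u$ is the modulus of continuity of $u$ (using again $\dist(x+n\alpha,y+n\alpha)=\dist(x,y)$), so $|T_n(x)-T_n(y)|\to 0$ as $x\to y$, uniformly in $n$, and the suspension is not kinematic expansive. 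Independently, the Denjoy--Koksma inequality always provides a small parameter with uniformly bounded Birkhoff sums: if $q_m$ is a denominator of a convergent of $\alpha$, then on $S^1$ one has $g_{\{q_m\alpha\}}(z)=T(z+q_m\alpha)-T(z)=w_m(z)-w_m(z+\alpha)$ with $w_m(z)=-\sum_{j=0}^{q_m-1}T(z+j\alpha)$, whence $\bigl|\sum_{k=0}^{n-1}g_{\{q_m\alpha\}}(y+k\alpha)\bigr|=\bigl|w_m(y)-w_m(y+n\alpha)\bigr|\leq\operatorname{osc}(w_m)\leq 2\operatorname{Var}(T)$ for all $n\in\Z$ and all $y$, while $|\{q_m\alpha\}|\to 0$; this already rules out kinematic expansiveness whenever the candidate expansive constant exceeds $2\operatorname{Var}(T)$.

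The main obstacle is the remaining small-divisor regime. For Liouville $\alpha$ the function $T$ need not be cohomologous to a constant with a continuous (nor even distributional) transfer function, and the Denjoy--Koksma bound $2\operatorname{Var}(T)$ does not tend to $0$, so neither device above settles the matter. To deal with it I would invoke the results of \cite{AvilaKocsard} on the cohomological equation for minimal circle maps (the distributional unique ergodicity established there): applied to the family $g_t$, which tends to $0$ in $C^1$ as $t\to 0$, these results should produce, for each prescribed $\rho>0$, an arbitrarily small $t\neq 0$ and a base point $y$ with $\bigl|\sum_{k=0}^{n-1}g_t(y+k\alpha)\bigr|<\rho$ for all $n\in\Z$; by the reduction of the second paragraph this contradicts every candidate expansive constant, so the suspension is not kinematic expansive. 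The delicate point --- and the reason a soft argument does not suffice --- is precisely that for Liouville $\alpha$ the Birkhoff sums of a fixed small $g_t$ may be unbounded, so the small translation $t$ and the base point $y$ must be constructed simultaneously and tuned to $\rho$.
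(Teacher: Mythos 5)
Your reduction to Proposition \ref{kexpsuspension}(3) and your Denjoy--Koksma computation are sound, but the proof is not complete: the step that handles small expansive constants (your ``small-divisor regime'') is only asserted, not carried out. You appeal to the distributional unique ergodicity results of \cite{AvilaKocsard} applied to the family $g_t$ and say they ``should produce'', for each $\rho>0$, a small $t\neq 0$ and a base point $y$ with all Birkhoff sums of $g_t$ below $\rho$; those results do not directly yield such a statement, and you give no argument. Note moreover that by Gottschalk--Hedlund, for the minimal rotation the boundedness of the Birkhoff sums of $g_t$ at a single point already forces $g_t$ to be a continuous coboundary, so the only viable small translations are essentially $t=\{q_n\alpha\}$, for which you have already exhibited the transfer function $w_n=-T_{q_n}$; the whole issue is to show $\operatorname{osc}(T_{q_n})\to 0$ rather than merely $\operatorname{osc}(T_{q_n})\le 2\operatorname{Var}(T)$. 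Your remark that $t$ and $y$ must be ``constructed simultaneously and tuned to $\rho$'' is therefore off the mark: no tuning of $y$ is needed at all.

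What closes the gap --- and what the paper actually uses, and where the $C^1$ hypothesis enters --- is the improved Denjoy--Koksma theorem of \cite{AvilaKocsard}: for $C^1$ return time $T$ and any irrational $\alpha$ (no Liouville/Diophantine dichotomy), $\sup_{x}\left|T_{q_n}(x)-q_n\int T\,d\mu\right|\to 0$, hence $\operatorname{osc}(w_n)=\operatorname{osc}(T_{q_n})\to 0$. Plugging this into your own telescoping estimate, the single translation $t=\{q_n\alpha\}$, uniformly in the base point (the rotation is an isometry, so section distances are constant), defeats every candidate expansive constant. Equivalently, the paper takes the two distinct section points $x_0$ and $x_{q_n}=f^{q_n}(x_0)$, which stay at constant distance $\{q_n\alpha\}$ while $|T_m(x_{q_n})-T_m(x_0)|=|T_{q_n}(x_m)-T_{q_n}(x_0)|<\epsilon$ for all $m$, in positive and negative time, contradicting Proposition \ref{kexpsuspension}. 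So your outline is essentially one citation away from the paper's argument, but as written it does not establish the theorem.
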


\begin{proof}
Let $\mu$ denote the $f$-invariant Lebesgue probability measure on the circle. 
Define
\[
 \tau=\int_{S^1} T\,d\mu.
\]
Denote by $\alpha\in\R\setminus\Q$ the angle of the rotation $f$. 
Let $q_n\in\N$ be the denominator of a rational approximation of $\alpha$. 
It holds that $f^{q_n}(x)\to x$ as $n\to\infty$ for all $x\in S^1$.
See, for example, Section 2.3.2 of \cite{AvilaKocsard} for more details. 
Consider the Birkhoff sum
\[
 T_m(x)=\sum_{i=0}^{m-1} T(f^i(x)).
\]
The improved Denjoy-Koksma Theorem proved in \cite{AvilaKocsard} states that 
\begin{equation}
 \label{desAK}
 \sup_{x\in S^1} \left|\tau q_n-T_{q_n}(x)\right|\to 0
\end{equation}
as $n\to\infty$. 
Fix $x_0\in S^1$ and define $x_n=f^n(x_0)$ for all $n\geq 0$.
It is easy to see that $T_{m+n}(x)=T_m(x_n)+T_n(x_0)$. 
Then 
\[
 T_n(x_m)-T_n(x_0)=T_m(x_n)-T_m(x_0)
\]
and in particular
\[
 T_{q_n}(x_m)-T_{q_n}(x_0)=T_m(x_{q_n})-T_m(x_0)
\]
for all $m,n\geq 0$. Applying equation (\ref{desAK}) we have that for all $\epsilon>0$ there is $N$ 
such that 
\[
 |T_{q_n}(x_m)-T_{q_n}(x_0)|<\epsilon
\]
for all $n\geq N$ and $m\geq 0$.
Therefore 
\[| T_m(x_{q_n})-T_m(x_0)|<\epsilon\]
for all $n\geq N$ and $m\geq 0$. 
Now we can take $n\geq N$ such that the distance between $x_{q_n}$ and $x_0$ is smaller than $\epsilon$. 
Therefore these two points are not separated by the suspension flow in positive time. 
Arguing in the same way for negative time we conclude that this flow is not kinematic expansive. 
\end{proof}

\begin{question}
Are there $C^0$ minimal kinematic expansive flows on the torus? 
\end{question}

\section{Kinematic expansive flows on surfaces}
\label{secKinSurf}


In Section \ref{HieOnSurf} we studied flows with the property of having every time change being 
kinematic expansive (strong kinematic expansiveness). 
In this section we consider what could be called \emph{conditional expansiveness}: 
the kinematic expansiveness of the flow depends on the time change. 
We consider flows on the disc and the annulus. 
In the final subsection we prove that every compact surface admits a kinematic expansive flow.

\subsection{The disc}
Let $D$ be a two-dimensional compact disc and consider $\phi\colon \R\times D\to D$ a continuous flow.
It is well known that under these conditions, $\phi$ has a singular point. 
For a kinematic expansive flow we show that at least one singularity must be in the interior of the disc. 
Next we study the relationship between the number of singularities and the differentiability of the flow. 

\begin{prop}
\label{lemaDiscoSing}
 If $\phi$ is a kinematic expansive flow on a disc $D$ then $\phi$ has a singularity in the interior of $D$.
\end{prop}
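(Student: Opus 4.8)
The plan is to argue by contradiction: suppose $\phi$ is kinematic expansive on $D$ but has no singularity in $\interior D$. Since kinematic expansiveness implies the separating property, the singular set is $\phi$-isolated (Remark \ref{rmkSing}), hence finite; and since a flow on the disc always has a singularity, we get a nonempty finite set $\{p_1,\dots,p_k\}\subset\partial D$. Each time-$t$ map is a homeomorphism of $D$, so it preserves $\partial D$; thus $\interior D$ is invariant and $\phi$ restricts there to a flow without singularities. This restricted flow has no periodic orbits either: such an orbit would bound a disc in $\interior D$ with invariant boundary, forcing a singularity inside it. By the Poincar\'e--Bendixson theorem (valid since $D\subset S^2$), the $\alpha$- and $\omega$-limit set of every orbit in $\interior D$ is a singularity or a polycycle, and hence meets $\partial D$.

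The key step is to find on $\partial D$ both a \emph{sink} and a \emph{source}. Doubling $D$ along the invariant circle $\partial D$ gives a continuous flow $\hat\phi$ on $S^2$ whose singularities are exactly $p_1,\dots,p_k$, each still isolated, so by Poincar\'e--Hopf $\sum_i\operatorname{ind}_{\hat\phi}(p_i)=\chi(S^2)=2$. An isolated singularity has no elliptic sectors (as recalled before Proposition \ref{propSectores}), hence its index is $1-h/2$ with $h$ the number of its hyperbolic sectors; so every index is at most $1$ and at least two of the $p_i$ have index exactly $1$, which forces $h=0$ for them, and then Proposition \ref{propSectores} leaves only the possibility of a sink or a source. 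Next one studies the finite cyclic pattern of the boundary arcs of $\partial D$: each arc is a regular orbit oriented from one singularity to the next, the flow enters a sink from both boundary sides, leaves a source from both sides, and passes through a fake saddle. From this, together with the fact that limit sets meet $\partial D$, I would deduce that there are a source $q$ and a sink $p$ on $\partial D$ and an orbit $\gamma\subset\interior D$ with $\alpha(\gamma)=\{q\}$, $\omega(\gamma)=\{p\}$ whose nearby orbits do the same, i.e.\ an open family of orbits emanating from $q$ and converging to $p$ (shadowing a chain of boundary arcs and fake saddles). Reversing $\phi$ if necessary preserves everything in sight, so we may fix this configuration.

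Finally I would contradict the separating property. As $q$ is a source on $\partial D$, pick a backward-invariant half-disc neighbourhood $N_q$ of $q$ with $\diam N_q<\delta$ all of whose points have $\alpha$-limit $\{q\}$; as $p$ is a sink, pick a forward-invariant half-disc neighbourhood $N_p$ of $p$ with $\diam N_p<\delta$ all of whose points converge to $p$. In the open family above, choose two orbits that are $\epsilon$-close and points $x\ne y$ on them at the moments they leave $N_q$. For $t\le0$ both orbits stay in $N_q$, so $\dist(\phi_t(x),\phi_t(y))<\diam N_q<\delta$; the transit from $N_q$ to $N_p$ along the first orbit takes a bounded time, so for $\epsilon$ small uniform continuity of the flow gives $\dist(\phi_t(x),\phi_t(y))<\delta$ throughout; and after both orbits enter $N_p$ one has $\dist(\phi_t(x),\phi_t(y))<\diam N_p<\delta$. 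Thus $\dist(\phi_t(x),\phi_t(y))<\delta$ for all $t\in\R$ with $x,y$ on distinct orbits, contradicting that $\phi$ is separating, hence kinematic expansive. So $\phi$ must have a singularity in $\interior D$.

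I expect the main obstacle to be the second paragraph: the topological bookkeeping that converts $\chi(D)\neq0$ into a genuine boundary sink \emph{and} source connected by an open family of interior orbits. Once this ``funnel at both ends'' is in place, the collapse of expansiveness is exactly the mechanism already visible in the periodic band of Example \ref{annulus} and in the pendulum flow of the introduction, where nearby orbits are forced to travel in lockstep.
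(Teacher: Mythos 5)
Your overall strategy (double $D$ along the invariant circle $\partial D$, apply Poincar\'e--Hopf, extract a boundary source and sink, then run the ``funnel'' argument) is coherent, and your final paragraph is correct once the configuration is granted; but the second paragraph, which you yourself flag as the main obstacle, is where the content of the proposition lies, and it is genuinely incomplete. Poincar\'e--Hopf together with the Bendixson index formula (index $1-h/2$ in the absence of elliptic sectors) only yields at least two boundary singularities of index $1$, i.e.\ at least two points that are topological sinks \emph{or} sources of the doubled flow; it does not give one of each --- both could be sinks as far as the index count is concerned. To produce a source and a sink you need the cyclic bookkeeping you only allude to: around $\partial D$ the singularities at which both adjacent boundary arcs point inward and those at which both point outward alternate, hence occur in equal numbers, and a ``both-outward'' point that is not a topological source must carry hyperbolic sectors in $D$ (doubled index at most $-1$); only then does the sum $=2$ force a genuine source and, symmetrically, a genuine sink. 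More seriously, even with a source $q$ and a sink $p$ in hand, the claimed open family of orbits with $\alpha$-limit $\{q\}$ and $\omega$-limit $\{p\}$ is not justified: Poincar\'e--Bendixson only says that an orbit emanating from $q$ has $\omega$-limit a singularity or a graphic, and a priori an open family of such orbits could spiral onto a polycycle formed by boundary saddle points and interior separatrix connections without ever entering the basin of any sink; in that case nearby orbits need not travel in lockstep (passages near the saddle points desynchronize them), so the funnel contradiction does not apply, and you must either exclude such graphics (a further index/Poincar\'e--Bendixson argument on the region they bound) or exhibit a non-separated pair by other means.

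For comparison, the paper's own proof is much shorter and uses a different mechanism: assuming all singularities lie on $\partial D$, it invokes Poincar\'e--Bendixson to conclude that every $\alpha$- and $\omega$-limit set is a singular point, and then argues that this forces arbitrarily small loops (elliptic sectors) at some boundary singularity, so that this singularity is not $\phi$-isolated, contradicting Remark \ref{rmkSing}; the contradiction there is with $\phi$-isolatedness, not with the separation of two distinct nearby orbits. Your funnel mechanism is the natural tool for configurations with no elliptic sectors at all (e.g.\ a boundary source feeding a boundary sink), so a completed version of your argument would be a genuinely different and more detailed route; but as submitted, the missing topological bookkeeping is precisely the heart of the matter, and the proof is not yet there.
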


\begin{proof}
Assume by contradiction that the singularities are in the boundary $\partial D$. 
The $\alpha$ and $\omega$-limit set of every point of $D$ must be a singular point, it follows by Poincar\'e-Bendixon Theorem. 
But this implies that there are arbitrarily small 
loops associated to a singular point (elliptic sectors in the terminology of \cite{Hartman}). 
This contradicts kinematic expansiveness because such singularities in the boundary cannot be $\phi$-isolated.
\end{proof}

The following result proves that the disc admits kinematic expansive flows. 
In particular this flow may have just one singular point.

\begin{prop}
\label{propDisco} 
Suppose that $\phi$ is a continuous flow in $D$ with a finite number of singularities and $p\in D$ is an interior point.
Assume that $p$ is a repeller fixed point and for all $x\neq p$ interior to $D$ the $\omega$-limit set of $x$ is 
$\omega(x)=\partial D$. Then $\phi$ admits a kinematic expansive time change.
\end{prop}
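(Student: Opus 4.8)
The plan is to construct the time change so that the flow in the interior of $D$ behaves like the semi-flow analyzed in Lemma \ref{lemaDisco-Anillo}, and then to check separately that points on $\partial D$ and near $p$ cannot contradict kinematic expansiveness. The hypotheses are exactly the ones needed to put ourselves in the situation of an annulus with one boundary component periodic (here a repeller circle's worth of behavior is replaced by the single repeller $p$, but after blowing up $p$ or, more simply, after removing a small flow-invariant disc around $p$) and the other boundary transversal-like, with every interior $\omega$-limit set equal to $\partial D$. So first I would fix a small Jordan curve $C$ around $p$ such that $C$ is transversal to $\phi$ and bounds a disc $D_0\ni p$ with $\phi_{\R^+}(C)\cap D_0=\emptyset$; this exists because $p$ is a repeller. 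The set $A=\clos(D\setminus D_0)$ is an annulus whose inner boundary $C$ is transversal (flow exits $D_0$), whose outer boundary is $\partial D$, and on which the $\omega$-limit of every point is $\partial D$. This is precisely the hypothesis of Lemma \ref{lemaDisco-Anillo}, except that there the inner boundary is a periodic orbit while here it is a transversal curve; one reduces to the lemma by observing that the return map to a global cross section $l$ of the semi-flow on $A$ is still topologically conjugate to a contraction $x\mapsto x/2$ of $[0,1]$ (with the fixed point corresponding to $\partial D$), so the construction and estimates of the lemma apply verbatim.

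Next I would extend the time change from $A$ to all of $D$: on $D_0$ take any time change whatsoever (for instance, leave $\phi$ unchanged there, after matching continuously along $C$), so that the resulting $\psi$ is a genuine global time change of $\phi$ on $D$. By Lemma \ref{lemaDisco-Anillo} applied to $A$, any two distinct points $x,y$ of the cross section $l$ with $x\neq y$ are separated by $\psi$, and the same divergent-series argument shows that any two points in the interior of $D$ that are not on the same orbit are separated. Then I would check the remaining pairs: a point of $\partial D$ and any other point of $D$ are separated because $\partial D$ is a union of periodic orbits (or arcs of saddle separatrices) and an interior point spirals out toward $\partial D$, so their distance cannot stay small for all $t\in\R$ in negative time — here one uses that $p$ is a repeller, so in negative time every interior orbit accumulates on $p$, giving the separation. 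Points in $D_0\setminus\{p\}$ versus points outside: in negative time the former converge to $p$ while the latter leave $D_0$, contradicting smallness of the distance. Finally the singular point $p$ itself is $\phi$-isolated (it is a repeller), hence separated from everything nearby.

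I expect the main obstacle to be the bookkeeping needed to verify that the construction of Lemma \ref{lemaDisco-Anillo} really transfers to the annulus $A$ whose inner boundary is a transversal curve rather than a periodic orbit — in particular, checking that the "wait until one point is close to $\partial D$ and then it stays close for a long time, while the return-time increments accumulate to a divergent series" mechanism survives, and that the time change glued on $D_0$ does not reintroduce a pair of non-separated points straddling $C$. The geometric picture is clear (the repeller $p$ plays the role that the periodic orbit played in the annulus lemma, only with the roles of past and future swapped), so the work is to make the two cross-section estimates of Lemma \ref{lemaDisco-Anillo} and the handling of $\partial D$ precise; once that is done, kinematic expansiveness of $\psi$ follows from Proposition \ref{kexpsuspension} applied to the global section $l$.
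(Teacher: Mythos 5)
Your reduction (delete a small disc $D_0$ around the repeller $p$ bounded by a transversal circle $C$, apply Lemma \ref{lemaDisco-Anillo} to the annulus $A=\clos(D\setminus D_0)$, glue the time change back on $D_0$) is exactly the paper's argument \emph{in the case where $\partial D$ contains no singular points}: then $\partial D$ is a periodic orbit, $A$ satisfies the hypotheses of the lemma verbatim, and nothing more is needed. The gap is that the proposition does not exclude singularities on $\partial D$ --- only finiteness of $\sing(\phi)$ and that the repeller $p$ is interior are assumed, and the hypothesis $\omega(x)=\partial D$ is perfectly compatible with $\partial D$ being a polycycle of saddles and separatrices (indeed the later theorem that every compact surface carries a kinematic expansive flow uses precisely this proposition with singular points at the corners of the triangles, i.e.\ on $\partial D$). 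In that case your claim that ``the construction and estimates of the lemma apply verbatim'' fails: the lemma's time change has return time to the cross section bounded between $1$ and $2$, but no time change of $\phi$ can realize a bounded return time here. A time change cannot remove the fixed points on $\partial D$, and an interior orbit spiralling onto $\partial D$ passes arbitrarily close to such a fixed point $q$; by continuity of the (time-changed) flow at $q$, the time it spends near $q$ between consecutive crossings of the section tends to $+\infty$, so the return time necessarily blows up at the boundary end of the section. Thus the lemma cannot be invoked, and the divergent-series estimate has to be redone with an unbounded return time.

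This is exactly the extra case the paper treats separately: it takes a cross section $l$ reaching $\partial D$, normalizes the return map to $f(x)=x/2$ on $[0,1]$ (fixed point $x=0$ corresponding to $\partial D$), and chooses a time change whose return time is $T(x)=1/x$, which does diverge at the boundary as it must; then for $\tfrac12\le x<y<1$ one gets $T(y_n)-T(x_n)=2^n\left(\frac1x-\frac1y\right)$, so $\sum_{n\ge0}\bigl(T(y_n)-T(x_n)\bigr)=+\infty$ and nearby distinct points on $l$ are separated, giving kinematic expansiveness via Proposition \ref{kexpsuspension}. So your proposal handles the regular-boundary case correctly and in the same way as the paper, but it is incomplete without this second construction (or some other argument producing divergent time differences despite the forced divergence of the return time). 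As a minor point, your side claim that in negative time every interior orbit accumulates on $p$ is also not immediate from the hypotheses when $\partial D$ carries singularities, but it is not needed: the essential pairs to separate are nearby points on the cross section, which the series estimate handles.
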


\begin{proof}
Let $l$ be a local cross section of the flow meeting the boundary of $D$.
Suppose that the return map on $l$ is the continuous map $f\colon l\to l$ 
and the return time is $T\colon l\to\R^+$. 
If there are no singular point in the boundary then we can 
apply Lemma \ref{lemaDisco-Anillo} to conclude.
Therefore we will assume that there are singularities in the boundary and $l$ is as in Figure \ref{disc}, where 
$q$ is another singular point.
\begin{figure}[htbp]
\begin{center}
\includegraphics{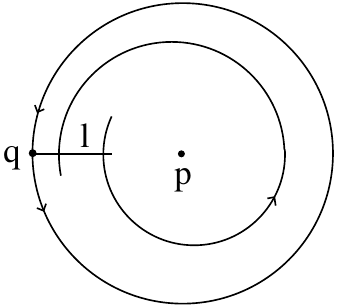}
    \caption{Kinematic expansive flow on the disc.}
   \label{disc}
\end{center}
\end{figure}

Without loss of generality we assume that the return map $f$ is $f(x,0)=(x/2,0)$. 
Consider a time change such that the return map of $l$ is $T(x,0)=1/x$.
Given $1/2\leq x<y<1$ we define $x_n=f^n(x)=x/2^n$ and
$y_n=f^n(y)=y/2^n$. 
Then $$T(y_n)-T(x_n)=2^n\left(\frac 1{x} - \frac 1{y}\right)$$
and therefore $\sum_{i=0}^\infty T(y_i)-T(x_i)=+\infty$. It implies that $\phi$ is kinematic expansive.
\end{proof}

The previous result does not hold if we add a hypothesis of differentiability.

\begin{thm}
If $\phi$ is a smooth kinematic expansive flows in the disc the $\phi$ has at least two singular points.
\end{thm}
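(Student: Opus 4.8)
The strategy is to argue by contradiction: suppose $\phi$ is a smooth kinematic expansive flow on the disc $D$ with exactly one singular point $p$. By Proposition \ref{lemaDiscoSing} we know $p$ lies in the interior of $D$. First I would use the Poincar\'e--Bendixson theory together with smoothness to pin down the local structure at $p$ and the global structure of the flow. Since $p$ is the only singularity and every orbit in $D$ is contained in a compact region, the $\omega$-limit set of every non-equilibrium point must be either $p$ or $\partial D$ (a periodic orbit is excluded: an innermost periodic orbit would bound a disc containing no singularity, contradicting the Poincar\'e index / existence of a singularity inside, and periodic orbits also conflict with kinematic expansiveness as in Remark after Example \ref{annulus}). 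Since $p$ is $\phi$-isolated (Remark \ref{rmkSing}), Proposition \ref{propSectores} forces $p$ to be a multiple saddle, a source, or a combination of hyperbolic and parabolic sectors; parabolic sectors and sinks create arbitrarily small loops or returning families contradicting kinematic expansiveness, so $p$ is a source or a saddle. If $p$ is a source the boundary $\partial D$ must be a periodic orbit, already excluded.

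So the remaining case is that $p$ is a saddle with $2k$ separatrices for some $k\ge 1$, and $\partial D$ is invariant with its own singularities --- but we assumed $p$ is the \emph{only} singular point, so $\partial D$ contains no equilibria and is therefore a single periodic orbit, which is again a contradiction. Hence the only way to avoid contradiction with $k=1$ would be to allow $\partial D$ to carry singularities; but there are none. The point of the smoothness hypothesis is exactly to rule out the degenerate continuous phenomena (e.g. a ``topological source'' whose unstable separatrix structure could mimic one singularity plus accumulation). Concretely, for a $C^1$ vector field the index of an isolated singularity is well-defined and the Poincar\'e--Hopf theorem on $D$ gives that the sum of indices equals $1$ (the Euler characteristic of the disc, using the inward/outward-pointing boundary condition that one may arrange by a time change, which preserves kinematic expansiveness in neither direction --- so instead I would invoke the index computed from the sector decomposition of Proposition \ref{propSectores}). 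A single saddle has index $1-k \le 0$, so it cannot account for total index $1$; thus at least one more singularity of positive index is needed, i.e. $\phi$ has at least two singular points.

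\textbf{Main obstacle.} The delicate step is the index bookkeeping when singularities may sit on the boundary $\partial D$ and when the flow is only $C^1$ (or merely continuous with a $C^1$ structure assumed): one must be careful that ``kinematic expansive'' plus smoothness genuinely forbids a source with elliptic/parabolic sectors and forbids $\partial D$ from being a periodic orbit, and one must correctly compute the contribution to the Poincar\'e index of a boundary singularity versus an interior one. I would handle this by combining Proposition \ref{propSectores} (which classifies the sector types at each $\phi$-isolated singularity and is available since separating flows have isolated singularities) with the smooth Poincar\'e--Hopf index formula, noting that a parabolic sector or a sink or a source-with-loop immediately violates kinematic expansiveness by producing arbitrarily close non-orbit-equivalent points (as in the Moebius band remark and Proposition \ref{lemaDiscoSing}); after eliminating those, every singularity is a saddle of non-positive index, so their indices cannot sum to the required value unless there are at least two of them, with at least one being of a type (e.g. a center-like configuration is excluded, so in fact the second one must again be forced by the index count). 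The cleanest writeup reduces everything to: isolated singularities $\Rightarrow$ saddles (by kinematic expansiveness, via Proposition \ref{propSectores}) $\Rightarrow$ each has index $\le -1 + 1 = 0$, wait $\le 1$; a $0$-saddle has index $1$ but a $0$-saddle on a disc forces the boundary to be periodic --- so one genuine saddle contributes index $\le 0$, and $\sum \mathrm{index} = 1$ then needs a second singularity.

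\textbf{Remark on a cleaner route.} Alternatively, and perhaps more robustly, I would argue: a smooth kinematic expansive flow on $D$ has all singularities isolated and none of sink/source/parabolic type, hence all are (possibly multiple) saddles of index $1-k_i\le 1$ with equality iff $k_i=0$; a $0$-saddle is a removable ``fake'' singularity and removing it (Lemma 4.1-type argument, as cited in the proof of Lemma \ref{toroNoG-exp}) yields a nonsingular flow on $D$, which is impossible since $\chi(D)=1\ne 0$. Therefore every singularity is a genuine multiple saddle with $k_i\ge 1$, contributing index $\le 0$; since $\sum_i(1-k_i)=1$ is impossible with a single term of value $\le 0$, there must be at least two singular points. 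This is the version I expect to present.
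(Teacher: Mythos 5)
Your approach has a genuine gap, and in fact the main line of the argument cannot work. The Poincar\'e--Hopf bookkeeping only shows that a single singularity would have to have index $1$; it cannot rule that case out, because a single interior singularity of index $1$ with $\partial D$ a periodic orbit is perfectly consistent with the topology of the disc. Worse, this configuration is exactly the one realized by Proposition \ref{propDisco}: a repeller at an interior point $p$, all other orbits spiralling towards $\partial D$, and the resulting (continuous) flow \emph{is} kinematic expansive. So your intermediate claims used to kill this case are false for kinematic expansiveness: periodic orbits do not conflict with it (Example \ref{annulus} is a kinematic expansive flow consisting entirely of periodic orbits -- only the geometric and strong notions exclude them), and sources, sinks and parabolic sectors do not conflict with it either (again Proposition \ref{propDisco}, and its time reversal). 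Note also that with one interior singularity the boundary circle is invariant and carries no equilibria, so it is forced to be a periodic orbit; your attempt to exclude periodic orbits via an ``innermost periodic orbit bounding a singularity-free disc'' fails because the innermost periodic orbit encloses $p$. Finally, your appeal to smoothness (well-definedness of the index, ruling out ``degenerate topological sources'') does not address the real obstruction: the source of Proposition \ref{propDisco} is a genuine topological repeller, and no index argument distinguishes it from a smooth one.

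The actual role of smoothness, and the route the paper takes, is dynamical rather than index-theoretic. Assuming a single singularity, Proposition \ref{lemaDiscoSing} puts it in the interior, so $\partial D$ is a periodic orbit; the periodic orbits are then totally ordered around $p$, and the innermost one bounds a sub-disc $D'$ whose interior contains no periodic orbit, so orbits just inside $\partial D'$ spiral and the return map on a transversal section has a fixed point at $\partial D'$ accumulated by wandering points. Smoothness of the flow makes the return time $T$ of that section $C^1$, and then the argument of Proposition \ref{suspKexpIC1} applies: $T(y)-T(x)=\int_x^y T'(u)\,du$, so along the disjoint intervals $[f^n(x),f^n(y)]$ the accumulated time differences $\sum_n\bigl(T(f^n(y))-T(f^n(x))\bigr)$ stay bounded, and by Proposition \ref{kexpsuspension} nearby points on the section are never separated, contradicting kinematic expansiveness. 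If you want to salvage your write-up, the index count can at most reduce you to the case of a single index-$1$ singularity with periodic boundary; you must then supply this $C^1$ return-time argument (or an equivalent use of smoothness) to finish, since that case is not excludable by topology or by $C^0$ dynamics.
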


\begin{proof} 
By contradiction assume that $\phi$ has only one singularity $p\in D$.
By Proposition \ref{lemaDiscoSing} we know that the singular point is in the interior of $D$ 
and therefore $\partial D$ is a periodic orbit.
Since there is just one singular point, the periodic orbits in $D$ can be totally ordered with respect to 
the interior singular point (i.e., if $\gamma_1,\gamma_2$ are periodic orbits then $\gamma_1<\gamma_2$ 
if $\gamma_1$ separates $p$ from $\gamma_2$). 
Considering a minimal periodic orbit, we obtain a sub-disc $D'\subset D$, bounded by such minimal periodic orbit, 
such that in the interior of $D'$ there is no periodic orbit. 
Now, applying the techniques of Proposition \ref{suspKexpIC1}, near $\partial D'$, 
we arrive to a contradiction.
\end{proof}

\subsection{Periodic bands} 

Denote by $A\subset \R^2$ a compact annulus bounded by two circles centered at the origin. 

\begin{prop}
\label{nonExp1}
 Suppose that $\phi$ is a kinematic expansive flow on $A$ 
 such that every orbit is contained in a circle centered at the origin. 
 If $\phi$ has singular points then they all are in one of the components of the boundary.
 In particular there are no interior singular points.
 \end{prop}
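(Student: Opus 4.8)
The plan is to argue by contradiction. Suppose $\phi$ has singular points on both boundary circles, say $\gamma_{\mathrm{in}}$ (the inner circle) and $\gamma_{\mathrm{out}}$ (the outer circle). Since every orbit lies in a circle centered at the origin, each such circle is $\phi$-invariant; so the singular points on a given boundary circle divide it into finitely many regular orbits (separatrices), and similarly on $\gamma_{\mathrm{out}}$. First I would record what $\phi$-isolatedness (Remark \ref{rmkSing}, which applies since kinematic expansive $\Rightarrow$ separating) forces: each singular point $p$ on the boundary must be $\phi$-isolated, so there is $\delta>0$ such that every point within $\delta$ of $p$ eventually leaves $B_\delta(p)$.

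Next I would exploit the product-like structure of $A$ near a boundary circle. Parametrize a collar neighborhood of, say, $\gamma_{\mathrm{in}}$ by $(\theta,r)$ with $\gamma_{\mathrm{in}}=\{r=0\}$; since orbits stay on circles $\{r=\mathrm{const}\}$, the flow on the collar is a "stack" of circle flows, each with the same finite set of singular $\theta$-values on $\gamma_{\mathrm{in}}$ (by $\phi$-isolatedness, singular points cannot accumulate, and one checks the singular set on nearby circles is locally determined). The key geometric point: pick a regular orbit arc $\sigma$ on $\gamma_{\mathrm{in}}$ running between two consecutive boundary singularities $p,q$; for $r>0$ small, consider points $x_r$ sitting "above" a fixed interior point of $\sigma$ on the circle of radius $r$. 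As $r\to 0$, such a point $x_r$ spends an arbitrarily long time near $p$ (approaching along the stable separatrix arc) before drifting across $\sigma$-direction. Concretely, for $x_r$ and $y_r$ two such points at radius $r$ with $y_r=\phi_{s_r}(x_r)$ for some $s_r\to\infty$, continuity of the flow and the invariant-circle structure give $\dist(\phi_t(x_r),\phi_t(y_r))\to 0$ uniformly in $t$ as $r\to 0$, while $\dist_\phi(x_r,y_r)$ (the orbit-segment diameter of Proposition after Definition \ref{dfkexp}) stays bounded below because the orbit segment from $x_r$ to $y_r$ winds around near $\gamma_{\mathrm{in}}$. This contradicts kinematic expansiveness via that Proposition.

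Actually, the cleanest contradiction uses both boundary circles only through an elliptic-sector / small-loop mechanism, exactly as in Proposition \ref{lemaDiscoSing}: if there are singularities on both $\gamma_{\mathrm{in}}$ and $\gamma_{\mathrm{out}}$, then in the open annulus a regular orbit near $\gamma_{\mathrm{in}}$ must have its $\alpha$- and $\omega$-limits among the boundary singularities, forcing the presence of arbitrarily small invariant loops hugging $\gamma_{\mathrm{in}}$ (or $\gamma_{\mathrm{out}}$); these are not $\phi$-isolated configurations and contradict kinematic expansiveness. So I would: (i) reduce to all singularities being $\phi$-isolated and on $\partial A$; (ii) analyze the limit sets of regular orbits in the open annulus using Poincaré–Bendixson on each invariant circle; (iii) produce the small loops and invoke Remark \ref{rmkSing} / the $\dist_\phi$-formulation of kinematic expansiveness to get the contradiction.

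The main obstacle I anticipate is step (ii): controlling the dynamics on the invariant circles uniformly as one approaches a boundary component — i.e., showing the singular set varies "continuously" enough that the long-transit-time phenomenon near a boundary saddle persists on all sufficiently nearby circles, and that no hidden periodic orbit in the interior rescues expansiveness. This is where the hypothesis "every orbit lies in a circle centered at the origin" is doing real work, and I would need to use it carefully to rule out the a priori possibility that one boundary circle is entirely singular while the other has an isolated arc structure that somehow avoids the loop mechanism.
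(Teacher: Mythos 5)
Your proposal has a genuine gap: the separation mechanisms you invoke do not work under the invariant--circle hypothesis, and the key idea of the paper's proof (the period function) is missing. First, the ``cleanest contradiction'' you settle on --- regular orbits in the open annulus having $\alpha$- and $\omega$-limits among the boundary singularities, producing arbitrarily small invariant loops as in Proposition \ref{lemaDiscoSing} --- cannot occur here: since every orbit lies in a circle centered at the origin, every interior circle containing no singular point is itself a single periodic orbit, so no interior orbit accumulates on the boundary and there are no small loops hugging a boundary component. Second, the pair $x_r,y_r$ on the same circle with $y_r=\phi_{s_r}(x_r)$, $s_r\to\infty$, does not satisfy $\dist(\phi_t(x_r),\phi_t(y_r))\to 0$ uniformly in $t$: at intermediate times one of the two points is crawling near the slow (singular) direction while the other is roughly on the opposite side of a circle whose radius is bounded below by the inner radius of $A$, so their distance is of the order of the diameter of that circle; uniform closeness over all of $\R$ does not follow from continuity of the flow. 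Moreover, if both points sit near the singular direction, then $\dist_\phi(x_r,y_r)$ is \emph{small}, because $\dist_\phi$ is an infimum over all orbit segments containing both points and the short arc through the slow region has small diameter --- so the lower bound you claim for $\dist_\phi$ fails. Finally, the statement also asserts that there are no interior singular points; you list ``reduce to all singularities being on $\partial A$'' as step (i) but give no argument for it, and it is precisely half of what must be proved.

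The paper's argument is different and hinges on the period function. For an interior singularity, take a transversal arc $l$ meeting at $p$ the invariant circle of that singularity, chosen so that every other point of $l$ lies on a singularity-free (hence periodic) circle; the period map on $l\setminus\{p\}$ is continuous and diverges to $+\infty$ at $p$, so by the intermediate value theorem one finds, arbitrarily close to $p$ and on opposite sides of it, two points with \emph{exactly equal} periods. Two distinct periodic points with the same period $T$ that start $\delta'$-close remain close for \emph{all} $t$ (uniform continuity of $\phi$ on $[0,T]$ plus periodicity), and they are not on a common small orbit segment, contradicting kinematic expansiveness. If singularities occurred on both boundary components, one instead takes a global cross section $s$ of the (all-periodic) interior flow; the period map diverges at both ends of $s$, hence attains an interior minimum at some $x$, and values slightly above the minimum are attained on both sides of $x$, again yielding arbitrarily close equal-period pairs. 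This equal-period device is exactly what replaces your ``long transit time'' heuristic, and it is the step your proposal lacks.
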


\begin{proof}
 We know that the set of singular points is finite. 
 Let us first show that there is no singularity in the interior. 
 We argue by contradiction.
 Take a segment $l$ transversal to the flow meeting at $p$ the circle of an interior singularity. 
 We can assume that there are no singular points in the circles of any $q\in l$ if $q\neq p$. 
 Since the circle of $p$ has at least one singularity we have that the return time map of $l\setminus\{p\}$ 
 diverges to $+\infty$ at $p$. 
 Therefore we can find two points, as close to $p$ as we wish, in different components of $l\setminus\{p\}$ with the same period. 
 These points contradict kinematic expansiveness. 
 
 Now assume that there are singular points in both components of $\partial A$. 
 Let $s$ be a global cross section of the flow meeting once each interior orbit. 
 As before, the return time map $T$ diverges in the boundaries of $s$. 
 Since $T$ is continuous, it has a minimum at some interior point $x\in s$. 
 Now we can find two points in different components of $l\setminus \{x\}$ with the same period. 
 If these points are sufficiently close to $x$, then kinematic expansiveness can be contradicted for arbitrary small 
 expansive constants.
\end{proof}

\begin{rmk}
 Notice that we have considered kinematic expansive flows on the annulus in Section \ref{subsecArc} (i.e., 
 suspensions of increasing arc homeomorphisms). 
\end{rmk}

%

\subsection{Every compact surface admits a kinematic expansive flow}
By our previous results we have that the sphere (and surfaces do not admitting non-trivial recurrence) 
does not admit strong kinematic expansive flows. 
For kinematic expansiveness there is no such restriction.

\begin{thm}
Every compact surface admits a kinematic expansive flow.
\end{thm}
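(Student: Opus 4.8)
The plan is to split compact surfaces into two groups. By Table~\ref{tablaExp}, geometric expansiveness implies kinematic expansiveness, and it is known (cf.\ the results of \cite{Artigue} quoted above, in particular Theorem~6.7 there and the singular suspensions of expansive interval exchange maps) that every compact surface carries a geometric expansive flow except the sphere $S^2$, the projective plane $\mathbb{RP}^2$, the torus $\T^2$ and the Klein bottle $K$ --- precisely the surfaces on which Poincar\'e--Bendixson (for $S^2,\mathbb{RP}^2$) or Kneser's theorem (for $K$) rules out the dense recurrence needed for geometric expansiveness. The torus is handled at once by Example~\ref{torussing}, which is even strong kinematic expansive. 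So the whole content of the theorem is to build a kinematic expansive flow on each of $S^2$, $\mathbb{RP}^2$ and $K$, which I would do by gluing the local models of Proposition~\ref{propDisco} and Lemma~\ref{lemaDisco-Anillo} along an invariant circle.

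For the sphere, write $S^2=D_1\cup_C D_2$ as two closed discs meeting along a common boundary circle $C$, and put on each $D_i$ the flow of Proposition~\ref{propDisco}: an interior repeller $p_i$ and every other interior orbit spiraling outward with $\omega(x)=C$, so that $C$ is a two-sided attracting periodic orbit; choose on each $D_i$ the time change from that proof (return-time increments along the cross section diverging as orbits approach $C$), traversed at unit speed on $C$ so the two match. The resulting continuous flow has two sources and the single periodic orbit $C$, and is kinematic expansive once $\delta$ is taken smaller than $\min\{\dist(p_1,p_2),\dist(p_1,C),\dist(p_2,C)\}$: a pair staying $\delta$-close for all time can never consist of a point of $D_1^{\circ}$ and a point of $D_2^{\circ}$ (in backward time these converge to $p_1$ and $p_2$, respectively), nor of a point of $C$ and an interior point of $D_i$ (they separate in backward time towards $p_i$), so the pair lies in one of the two discs, where Proposition~\ref{propDisco} applies, or on the periodic orbit $C$, which is trivially kinematic expansive.

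For $\mathbb{RP}^2$ and $K$ I would use $\mathbb{RP}^2=M\cup_C D$ (a M\"obius band and a disc) and $K=M_1\cup_C M_2$ (two M\"obius bands), glued along the common boundary circle $C$. On each disc piece put the flow of Proposition~\ref{propDisco}; on each M\"obius band $M$ put a flow whose core circle $\gamma$ is a periodic orbit and all of whose other orbits spiral onto $\gamma$ (in forward time for $M_2$, in backward time for $M_1$, so that $C$ becomes, as needed, repelling or attracting from that side), with $C=\partial M$ a global transversal cross section. Gluing at unit speed along $C$ gives a continuous flow, and the verification of kinematic expansiveness is exactly as for the sphere: pairs lying in a single piece are controlled by Proposition~\ref{propDisco} or by the M\"obius version of Lemma~\ref{lemaDisco-Anillo}, pairs split across two pieces are impossible once $\delta$ is smaller than the distances between $C$, the $\gamma$'s and the sources (since the forward or backward limit sets of the two points are then disjoint), and pairs on $C$, $\gamma$, $\gamma_i$ sit on a periodic orbit.

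The only non-routine point --- and the step I expect to be the main obstacle --- is that a periodic orbit that is the core of a M\"obius band has no annular neighbourhood, so Lemma~\ref{lemaDisco-Anillo} cannot be quoted verbatim for the M\"obius pieces. The plan here is to take the cross section $l$ as a transversal arc crossing $\gamma$ once; the first return map is then orientation reversing, $f(x)=-x/2$ in a coordinate vanishing on $\gamma$, so ``double trips'' give $f^2(x)=x/4$, and choosing the return time $T$ to be even, $T(x)=\widetilde T(|x|)$ with $\widetilde T$ the return time of Lemma~\ref{lemaDisco-Anillo}, reduces the local model precisely to that of the lemma. One then checks (via item (3) of Proposition~\ref{kexpsuspension}, which tests all $n\in\Z$, so that the backward iterates $f^{-n}(x)=(-2)^n x$ separate the symmetric pairs $\{x,-x\}$ near $\gamma$) that the time change so obtained makes $\phi|_M$ kinematic expansive and joins continuously to the unit-speed flow along $C$. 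With this local normalization in hand, the global bookkeeping above completes the proof.
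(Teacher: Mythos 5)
Your route is genuinely different from the paper's. The paper does no case analysis at all: it takes a triangulation $T_1,\dots,T_n$ of the surface, fixes an orientation on each edge, and puts on every triangle (a disc) a kinematic expansive flow in the spirit of Proposition \ref{propDisco}, with singularities at the vertices and the oriented edges as invariant arcs; these glue along the edges to a global flow whose kinematic expansiveness is then checked directly. You instead invoke the existence, quoted from \cite{Artigue}, of geometric expansive flows on all compact surfaces except $S^2$, $\mathbb{R}P^2$, $\T^2$ and the Klein bottle (geometric implies kinematic by Table \ref{tablaExp}), dispose of $\T^2$ with Example \ref{torussing}, and construct the three remaining cases by hand from disc and M\"obius pieces. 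What your approach buys: it isolates exactly the surfaces where kinematic-but-not-geometric expansiveness is the real issue, and your treatment of the one-sided core circle --- orientation-reversing holonomy $x\mapsto -x/2$, even return time $T(x)=\widetilde T(|x|)$, symmetric pairs $\{x,-x\}$ separated spatially by backward iterates via item (3) of Proposition \ref{kexpsuspension} --- is a genuine addition, since neither Proposition \ref{propDisco} nor Lemma \ref{lemaDisco-Anillo} covers a periodic orbit without an annular neighbourhood. What the paper's approach buys: uniformity (one construction for every surface), no appeal to outside existence results, and only disc pieces, so the M\"obius difficulty never arises.

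One point you must repair before the argument is complete: as written, your gluing data for $\mathbb{R}P^2$ and the Klein bottle are inconsistent. You place the flow of Proposition \ref{propDisco} on the disc piece, whose boundary is invariant (indeed $\omega(x)=\partial D$ for interior points), while simultaneously declaring $C=\partial M$ a global transversal cross section of the M\"obius piece; a circle cannot be invariant from one side and transverse from the other, so the two velocity fields do not define a flow along $C$. Either make $C$ invariant on both sides (a periodic orbit attracting from the disc collar and repelled into the M\"obius band, with a Lemma \ref{lemaDisco-Anillo}-type time change on each collar), or make $C$ transverse on both sides (then the disc piece is just a repeller whose orbits exit through $C$, not literally the flow of Proposition \ref{propDisco}). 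Relatedly, in the $S^2$ construction, where $C$ is a genuine periodic orbit with no boundary singularities, return times of nearby orbits must remain bounded (they converge to the period of $C$), so the time change to use is the bounded, oscillating one of Lemma \ref{lemaDisco-Anillo}, whose successive differences have divergent sum, not the divergent $T(x)=1/x$ that Proposition \ref{propDisco} employs when the boundary carries singularities. With these adjustments your plan goes through.
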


\begin{proof}
Given a compact surface $S$ consider a triangulation $T_1,\dots,T_n$. 
Fix an orientation on each edge. In Figure \ref{triangulos} we see that each triangle $T_i$ admits a 
kinematic expansive flow (recall Proposition \ref{propDisco}) 
with any prescribed 
orientation in the edges and singular points in the corners. 
Now it is easy to see that the global flow is kinematic expansive.
\begin{figure}[htbp]
\begin{center}
\includegraphics{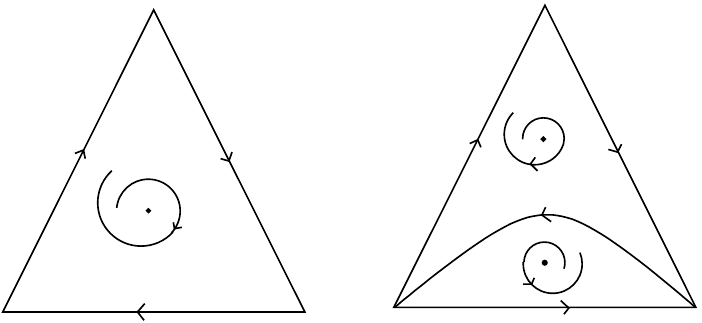}
    \caption{A kinematic expansive flow on each triangle.}
   \label{triangulos}
\end{center}
\end{figure}
\end{proof}

\section{Positive expansive flows} 
\label{secPositive}
In this section we consider positive expansive flows. 

\begin{df} 
A flow $\phi$ is \emph{positive kinematic expansive} if for all $\epsilon>0$ there exists $\delta>0$ such that if 
$\dist(\phi_t(x),\phi_t(y))<\delta$ for all $t\geq0$ then there exists $s\in\R$ such that $y=\phi_s(x)$ and $|s|<\epsilon$.
\end{df}

In \cite{Artigue2} it is proved that positive geometric expansive flows 
are trivial, they consist on a finite number of compact orbits (singular or periodic).
The case of positive kinematic expansiveness is different, as the following remark shows.

\begin{rmk}
If $X$ is a compact subset of $\R$ then for every injective and continuous map $T\colon
X\to\R^+$ the suspension flow of the identity map
$f\colon X\to X$ by $T$ is positive kinematic expansive. 
Notice also that it is negative expansive, i.e., its inverse flow is positive expansive.
\end{rmk}

In this section we first study the behavior of a positive kinematic expansive (and also separating) 
flow near a compact orbit.  
On surfaces we give a characterization of such flows. 
We also show that on a compact metric space a positive kinematic expansive flow may not be negative 
kinematic expansive.

\subsection{Periodic orbits}

In this section we consider compact orbits of positive kinematic expansive and separating flows.

%

\begin{prop}
\label{posexpper}
Let $\phi$ be a positive kinematic expansive flow on a compact metric space.
If $\gamma$
is a periodic orbit then there exists $\delta>0$ such that if
$\phi_{\R^+}(x)\subset B_\delta(\gamma)$ and $x\notin\gamma$ then $x$ is a
periodic point. 
\end{prop}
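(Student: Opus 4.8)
The plan is to argue by contradiction. Suppose the statement fails for some periodic orbit $\gamma$, and let $\tau>0$ be its (minimal) period. Then for every $n\in\N$ there is a point $x_n\notin\gamma$ with $\phi_{\R^+}(x_n)\subset B_{1/n}(\gamma)$ that is not a periodic point; since the set of singularities is closed and disjoint from $\gamma$ while $\dist(x_n,\gamma)\to 0$, we may also assume that no $x_n$ is a singularity. The central idea is that the comparison to feed into positive kinematic expansiveness is the one between $x_n$ and its time-$\tau$ shift $\phi_\tau(x_n)$: near $\gamma$ this shift almost fixes the point, yet for an aperiodic $x_n$ it is not a small time shift of $x_n$, so positive expansiveness must detect a macroscopic separation somewhere along the forward orbit of $x_n$ — an orbit that is trapped in the thin tube $B_{1/n}(\gamma)$, which is what will produce the contradiction.

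In detail, I would first fix $\epsilon\in(0,\tau)$ and let $\delta_0>0$ be a positive kinematic expansive constant associated with $\epsilon$; crucially $\delta_0$ is independent of $n$. Since $x_n$ is neither periodic nor singular, the orbit map $t\mapsto\phi_t(x_n)$ is injective, so $\phi_\tau(x_n)=\phi_s(x_n)$ forces $s=\tau\notin(-\epsilon,\epsilon)$. Hence $\phi_\tau(x_n)$ is not of the form $\phi_s(x_n)$ with $|s|<\epsilon$, and the contrapositive of the definition of positive kinematic expansiveness gives a time $t_n\ge 0$ with $\dist\bigl(\phi_{t_n}(x_n),\phi_{t_n}(\phi_\tau(x_n))\bigr)\ge\delta_0$. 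Setting $a_n=\phi_{t_n}(x_n)$ and using $\phi_{t_n}\circ\phi_\tau=\phi_\tau\circ\phi_{t_n}$, this says $\dist(a_n,\phi_\tau(a_n))\ge\delta_0$, while at the same time $a_n\in\phi_{\R^+}(x_n)\subset B_{1/n}(\gamma)$ gives $\dist(a_n,\gamma)<1/n$.

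To conclude, I would invoke compactness of $X$ to pass to a subsequence with $a_{n_k}\to\bar a$; since $\dist(a_{n_k},\gamma)\to 0$ and $\gamma$ is compact, $\bar a\in\gamma$, hence $\phi_\tau(\bar a)=\bar a$. Joint continuity of the flow then yields $\phi_\tau(a_{n_k})\to\phi_\tau(\bar a)=\bar a$, so $\dist(a_{n_k},\phi_\tau(a_{n_k}))\to 0$, contradicting $\dist(a_{n_k},\phi_\tau(a_{n_k}))\ge\delta_0>0$. I expect the only genuine difficulty to be conceptual rather than computational: seeing that the correct object to compare $x$ with is its shift $\phi_\tau(x)$ (and not points of $\gamma$ themselves, which would drift out of phase with a nearby non-periodic orbit, nor the first return $f(x)$ to a cross section, whose forward orbit need not shadow that of $x$), together with the routine but necessary check that for an aperiodic point the shift $\phi_\tau(x)$ really does violate the ``$|s|<\epsilon$'' clause of the definition.
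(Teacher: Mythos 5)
Your proof is correct. The only facts you use beyond the definition are standard: a non-singular, non-periodic point has an injective orbit map (so $\phi_\tau(x_n)$ is not $\phi_s(x_n)$ for any $|s|<\epsilon<\tau$), singularities stay uniformly away from $\gamma$, and the separation time $t_n\ge 0$ produced by the contrapositive of positive kinematic expansiveness keeps $a_n=\phi_{t_n}(x_n)$ inside the tube; the limit point $\bar a\in\gamma$ is fixed by $\phi_\tau$, so $\dist(a_{n_k},\phi_\tau(a_{n_k}))\to 0$, contradicting the uniform lower bound $\delta_0$. That said, your route differs from the paper's. The paper works directly, not by contradiction: it takes a small local cross section $l$ through $\gamma$, compares $x$ with its \emph{first return} $f(x)$ rather than with $\phi_\tau(x)$, and controls the accumulated time discrepancy by the telescoping identity $\bigl|\sum_{i=0}^{n}T(f^i(f(x)))-\sum_{i=0}^{n}T(f^i(x))\bigr|=|T(f^{n+1}(x))-T(x)|\le k$, where $k$ is the oscillation of the return time on $l$; this keeps the two forward orbits within the expansive constant, so expansiveness forces $f(x)=x$. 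The paper's argument buys an explicit $\delta$ (any section whose diameter and return-time oscillation are below the expansive constant works) and identifies $x$ as a fixed point of the return map, so its period is close to that of $\gamma$; the cost is the cross-section/return-time machinery and an appeal to the suspension-type criterion of Proposition \ref{kexpsuspension}. Your compactness-plus-contradiction argument is more elementary and self-contained, at the price of being non-constructive in $\delta$ and giving no information about the period of the nearby orbits.
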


\begin{proof}
 Let $l\subset X$ be a small local cross section of time $\tau>0$ meeting 
 the periodic orbit $\gamma$ only at some point $p\in\gamma$. 
 Assume that there is $x\in l$ such that $\phi_t(x)$ is close to $\gamma$ for all 
 $t\geq 0$.
 Let $x_n$ be the sequence of returns of $x$ to $l$ and consider the increasing sequence of return times $t_n$ such that 
 $\phi_{t_n}(x)=x_n$ with $x_0=x$. 
 Take $y=x_1$. 
 Denote by $T$ the return time map of $l$. 
 Consider $k=\sup_{a,b\in l}|T(a)-T(b)|$. 
 Notice that $k\to 0$  if $\diam(l)\to 0$. 
 Denote by $f$ the first return map of $l$, $f(a)=\phi_{T(a)}(a)$ for all $a\in l$ where $f$ is defined. 
 Since $y=f(x)$ we have that
 \[
  \left|\sum_{i=0}^n T(f^i(y))-\sum_{i=0}^n T(f^i(x))\right| = |T(f^{n+1}(x))-T(x)|\leq k
 \]
 for all $n\geq 0$.

 Therefore, expansiveness implies that $x$ and $y$ are in the same local orbit 
 and since $x,y$ are in the local cross section $l$ we have that $y=x$. 
 Then $x$ is a fixed point of $f$ and a periodic point of $\phi$.
\end{proof}

\begin{df} 
A flow $\phi$ is \emph{positive separating} if there exists $\delta>0$ such that if 
$\dist(\phi_t(x),\phi_t(y))<\delta$ for all $t\geq0$ then $y=\phi_\R(x)$.
\end{df}

The following example is a positive separating flow that is not positive kinematic expansive 
and it shows that Proposition \ref{posexpper} does not hold for positive separating flows. 

\begin{ejp}
 Let $X=\{0,1\}\cup\{x_n:n\in\Z\}\subset\R$ such that $x_n$ is an increasing sequence, $\lim_{n\to-\infty} x_n=0$ and 
 $\lim_{n\to\infty} x_n=1$. Define the homeomorphism $f\colon X\to X$ by $f(0)=0$, $f(1)=1$ and $f(x_n)=x_{n+1}$. 
 Consider $T\colon X\to \R^+$ given by $T(0)=T(1)=1$ and $T(x_n)=\frac1{|n|+1}$ for all $n\in\Z$. 
 Let $\phi$ be the suspension flow of $f$ by $T$.
 By the previous proposition it is easy to see that it is not positive kinematic expansive. 
 It also holds that $\phi$ is positive separating (the proof is trivial because there are only three orbits for the flow) 
 and 
 it shows that Proposition \ref{posexpper} does not hold for positive separating flows.
\end{ejp}

\begin{prop}
\label{posepsing}
Suppose that $\phi$ is a positive separating flow with a singular point $p\in X$. 
If for some $x\in X$ it holds that $p\in \omega(x)$ then $x=p$. 
Consequently, there are no singularities in the $\omega$-limit set of a regular point.
\end{prop}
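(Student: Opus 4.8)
The plan is to argue by contradiction: assume $x\neq p$. If $x$ were a singularity then $\omega(x)=\{x\}$ would give $p=x$, a contradiction; hence $x$ is regular, and moreover $p\notin\phi_\R(x)$, since $p=\phi_s(x)$ for some $s$ would force $x=\phi_{-s}(p)=p$. Fix a positive separating constant $\delta>0$ and set $g(t)=\dist(\phi_t(x),p)$; this is continuous and, since $p\in\omega(x)$, satisfies $\liminf_{t\to+\infty}g(t)=0$. The heart of the argument is to produce a point $w\neq p$ whose forward orbit stays close to $p$, namely $\dist(\phi_s(w),p)\le\delta'$ for all $s\ge0$ and some $\delta'<\delta$. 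Given such a $w$, and using $\phi_s(p)=p$, we get $\dist(\phi_s(w),\phi_s(p))=\dist(\phi_s(w),p)\le\delta'<\delta$ for all $s\ge0$, so positive separating yields $p\in\phi_\R(w)$; thus $\phi_{s_0}(w)=p$ for some $s_0$, and since $p$ is singular, $w=\phi_{-s_0}(p)=p$, contradicting $w\neq p$. The final assertion of the proposition is then immediate: if $x$ is regular and $q\in\omega(x)$ is singular, applying what was just proved with $p=q$ gives $x=q$, which is absurd.

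To construct $w$ I would split into two cases according to $L:=\limsup_{t\to+\infty}g(t)$. If $L=0$, then $g(t)\to0$, so $g(t)\le\delta/2$ for all $t\ge T$ with $T$ large, and $w:=\phi_T(x)$ does the job (with $\delta'=\delta/2$), while $w\neq p$ because $p\notin\phi_\R(x)$. If $L>0$, put $\delta'=\tfrac12\min(\delta,L)$, so that $g$ oscillates: $\liminf g=0<\delta'<L=\limsup g$, and in particular the closed set $\{t\ge0:g(t)\ge\delta'\}$ is unbounded. Choose $\tau_k\to+\infty$ with $g(\tau_k)\to0$ and put $a_k=\sup\{t<\tau_k:g(t)\ge\delta'\}$ and $b_k=\inf\{t>\tau_k:g(t)\ge\delta'\}$; these are defined for $k$ large, and by continuity $g(a_k)=g(b_k)=\delta'$ and $g\le\delta'$ on $[a_k,b_k]$. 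Uniform continuity of $\phi$ on $[0,M]\times X$, together with $g(\tau_k)\to0$, forces $b_k-\tau_k\to+\infty$ and hence $b_k-a_k\to+\infty$. Then $w_k:=\phi_{a_k}(x)$ sits at distance exactly $\delta'$ from $p$; passing to a convergent subsequence $w_k\to w$ we get $\dist(w,p)=\delta'$, so $w\neq p$, and for every fixed $s\ge0$ we have $a_k+s\in[a_k,b_k]$ for $k$ large, whence $\dist(\phi_s(w_k),p)=g(a_k+s)\le\delta'$; letting $k\to\infty$ gives $\dist(\phi_s(w),p)\le\delta'$ for all $s\ge0$, as needed.

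The case $L=0$ is easy; the real obstacle is $L>0$, where the forward orbit of $x$ repeatedly leaves every neighbourhood of $p$, so $x$ itself is useless and one must extract a genuinely trapped forward orbit out of the long excursions of $x$ near $p$. This is exactly the role of the \emph{escape point} $\phi_{a_k}(x)$: it lies at the fixed distance $\delta'$ from $p$, but its forward orbit remains within $\delta'$ of $p$ for a time tending to infinity, so its limit $w$ is a point distinct from $p$ with a $\delta'$-trapped forward orbit. Everything else — continuity of $g$, the crossing-time definitions of $a_k$ and $b_k$, the uniform-continuity estimate $b_k-\tau_k\to+\infty$, and the concluding use of positive separating — is routine.
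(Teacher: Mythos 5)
Your proof is correct and follows essentially the same route as the paper: derive a contradiction with positive separating by exhibiting a point distinct from $p$ whose forward orbit is never separated from the singularity. The paper compresses this into the one-line claim that some $y\neq p$ has $\omega(y)=\{p\}$; your escape-point construction supplies that step in the slightly weaker but sufficient form of a point $w\neq p$ whose forward orbit stays in a $\delta'$-ball around $p$.
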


\begin{proof}
 Arguing by contradiction it is easy to see that there is $y\neq p$ such that $\omega(y)=\{p\}$. 
 But this contradicts that $\phi$ is positive separating.
\end{proof}

\subsection{Positive kinematic expansive flows on surfaces}

In this section we classify positive kinematic expansive flows of compact surfaces. 
We consider the $C^0$ and $C^2$ case.

\begin{lem}\label{peixoto2}
Let $l=[a,b]$ and $l'$ be two compact local cross sections and suppose there exists a
continuous non-bounded function $\tau\colon [a,b)\to\R$ such that
$\phi_{\tau(x)}x\in l'$ for all $x$ in $[a,b)$.
Then $\omega(b)\subset\sing$.
\end{lem}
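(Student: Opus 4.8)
The plan is to argue by contradiction: assuming $\omega(b)\not\subset\sing$, I will bound $\tau$ on a left neighbourhood of $b$, which contradicts its unboundedness. Since $\tau$ is continuous it is bounded on every $[a,c]$ with $c<b$, so unboundedness forces a sequence $x_n\to b$ with $|\tau(x_n)|\to+\infty$; after passing to a subsequence we may assume $\tau_n:=\tau(x_n)\to+\infty$ (the case $\tau_n\to-\infty$ is analogous, for the $\alpha$-limit set), and, by compactness of $l'$, that $\phi_{\tau_n}(x_n)$ converges to a point of $l'$. Fix once and for all a \emph{regular} point $w\in\omega(b)$; this configuration is what we must contradict.

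The heart of the argument is a Poincar\'e first-return analysis near $w$. Choose a short transversal arc $\Sigma\ni w$. Since $w\in\omega(b)$ the forward orbit of $b$ meets $\Sigma$ infinitely often; writing $p_0,p_1,\dots$ for its successive crossings of $\Sigma$ one has $p_{k+1}=P(p_k)$ for the first-return map $P$ of $\Sigma$, and, after shrinking $\Sigma$ and discarding an initial block, $p_k\to w$. Regarding $P$ as a homeomorphism between subarcs of the arc $\Sigma$, from $P^k(p_0)\to w$ and one-dimensionality one obtains a compact subarc $J\subset\Sigma$, with $p_0$ in its interior, such that $P(J)\subset J$ and $P^n|_J\to w$ uniformly: no fixed point of $P$, nor endpoint of its domain, can separate $w$ from $p_0$ inside $\Sigma$, for otherwise the iterates $P^k(p_0)$ could not reach $w$. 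Consequently, for $x$ close enough to $b$ the orbit of $x$ meets $\Sigma$ inside $J$ at a time bounded independently of $x$ (near the first time the orbit of $b$ meets $\Sigma$) and thereafter returns to $\Sigma$ forever, with return points tending to $w$; so, past that moment, its forward orbit stays in any prescribed neighbourhood of $\omega(b)$. In short, there are $c<b$, $C>0$ and a neighbourhood $N$ of $\omega(b)$, as small as we please, with $\phi_t(x)\in N$ for all $x\in(c,b)$ and all $t\ge C$.

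To finish I split into two cases. If $\omega(b)\cap l'=\emptyset$, choose $N$ disjoint from $l'$; then $\phi_t(x)\notin l'$ for every $x\in(c,b)$ and $t\ge C$, hence $\tau(x)<C$ on $(c,b)$, contradicting $\tau_n\to+\infty$ since $x_n\to b$. If $\omega(b)\cap l'\neq\emptyset$, then the forward orbit of $b$ crosses the transversal $l'$ at a sequence of \emph{finite} times $t_0<t_1<\cdots$, and, by the confinement to $N$, for $x\in(c,b)$ the $l'$-crossing times of the orbit of $x$ are organized into pairwise disjoint continuous ``branches'' $x\mapsto t_j(x)$ with $t_j(x)\to t_j(b)<\infty$ as $x\to b$ (this uses only the transversality of $l'$ together with the fact that such orbits have no $l'$-crossings other than those shadowing the crossings of the orbit of $b$). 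As $\tau$ is continuous on $(c,b)$ and equals, at each point, one of these branches, and distinct branches never meet, $\tau$ must coincide near $b$ with a single branch $t_{j_0}$; therefore $\lim_{x\to b}\tau(x)=t_{j_0}(b)<\infty$, again a contradiction. In both cases we reach an absurdity, so $\omega(b)\subset\sing$.

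The step I expect to be the main obstacle is the confinement claim of the second paragraph: upgrading the convergence $p_k\to w$ \emph{along the single orbit of $b$} to a capture of \emph{all} orbits that start near $b$ into a small neighbourhood of $\omega(b)$ within a uniformly bounded time. One must take care of the case in which $w$ lies on the boundary of the domain of $P$ --- e.g.\ $\omega(b)$ a homoclinic loop through a singularity, where $w$ is not a fixed point of $P$ but merely a limit of its forward iterates --- and, more delicately, the case in which $\Sigma\cap\omega(b)\supsetneq\{w\}$ (a non-trivial minimal set), where the first-return iterates of $p_0$ need not converge to $w$; in that situation the organization of the $l'$-crossings into branches used in the final case must instead be extracted directly from the transversality of $l'$, and that is where the bulk of the work lies.
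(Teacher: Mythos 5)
There is a genuine gap, and it sits exactly where you flag it. Note first that the paper does not prove this lemma at all: it simply cites Lemma 3 of \cite{Pe} and Lemma 2.2 of \cite{Artigue}, so the whole burden of the argument is on you. Your plan reduces everything to the confinement claim of the second paragraph, and that claim is established only under the tacit assumption that the successive crossings $p_k$ of the orbit of $b$ with $\Sigma$ converge to $w$ and that a compact subarc $J\ni p_0$ with $P(J)\subset J$ and $P^n|_J\to w$ uniformly exists. This is simply false when $\omega(b)$ is a non-trivial recurrent set, which is a case that cannot be excluded a priori: for an irrational flow on the torus (or an exceptional minimal set on a higher-genus surface) the crossings of $\Sigma$ accumulate on $\Sigma\cap\omega(b)$, which is an arc or a Cantor set, they are not eventually monotone towards $w$, no forward-invariant trapping subarc exists, and nearby orbits are not captured near $\omega(b)$ "within a uniformly bounded time" in any sense that bounds $\tau$. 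The fallback you propose for the second case is likewise unproven: the assertion that orbits of points near $b$ have no $l'$-crossings other than those shadowing the crossings of the orbit of $b$ is precisely what fails for recurrent dynamics, where an orbit close to $b$ keeps crossing $l'$ at arbitrarily large times, so the decomposition of crossing times into finitely many continuous branches near $b$, and hence the conclusion $\lim_{x\to b}\tau(x)<\infty$, does not follow from transversality alone. Since the lemma is applied in the paper in situations where $\omega(b)$ can be exactly such a recurrent set (the torus case is live in Theorem \ref{teoPosExpSup}), the part you defer as "the bulk of the work" is not a routine verification but the essential content of the lemma; the cited proofs of Peixoto and Artigue handle it with genuinely two-dimensional arguments (monotonicity of successive intersections with a transversal and Poincar\'e--Bendixson-type region arguments), none of which appear in your write-up.

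Two smaller points. Even in the non-recurrent cases you do treat, the case where $w$ lies on a graph ($\omega(b)$ a union of saddles and separatrices) needs the bounded-return-time and trapping-region arguments spelled out, since $w$ itself may never return to $\Sigma$ and returns of nearby points take unboundedly long; your sketch gestures at this but does not supply it. Finally, your reduction "the case $\tau_n\to-\infty$ is analogous, for the $\alpha$-limit set" does not yield the stated conclusion $\omega(b)\subset\sing$; as stated the lemma requires either taking $\tau$ bounded below (as in \cite{Pe}) or an extra argument converting the $\alpha$-limit information into the asserted one.
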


\begin{proof}
See Lemma 3 in \cite{Pe} or Lemma 2.2 in \cite{Artigue}.
\end{proof}

\begin{prop}
\label{pkexpsurfsing}
If $\phi$ is positive kinematic expansive on a compact surface then $\sing(\phi)=\emptyset$.
\end{prop}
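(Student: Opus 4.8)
The plan is to argue by contradiction: suppose $\phi$ is positive kinematic expansive and has a singularity $p$. Since positive kinematic expansiveness implies positive separating, hence separating, Remark \ref{rmkSing} gives that every singularity of $\phi$ is $\phi$-isolated and that $\sing(\phi)$ is finite. I would then feed $p$ into Proposition \ref{propSectores}: either $p$ has a hyperbolic sector, or $p$ is a sink, or $p$ is a source. If $p$ has a hyperbolic sector, its boundary contains a stable separatrix arc, that is, a regular point $y\neq p$ with $\phi_t(y)\to p$ as $t\to+\infty$; then $p\in\omega(y)$, contradicting Proposition \ref{posepsing}. If $p$ is a sink, every regular point $y$ sufficiently close to $p$ has $\omega(y)=\{p\}$, again contradicting Proposition \ref{posepsing}. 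So the only surviving possibility is that \emph{every} singularity of $\phi$ is a source, and all the work goes into excluding this.

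For that case I would pass to a global argument. Around each source $p_i$ choose a transversal Jordan curve $C_i$ bounding a small open disc $U_i\ni p_i$ with the flow crossing $C_i$ out of $U_i$ (such a circle exists because, by Propositions \ref{propSectores} and \ref{cubrimiento}, a source is surrounded by parabolic flow boxes), and put $W=S\setminus\bigcup_i U_i$. Then $W$ is a compact connected surface with nonempty boundary $\bigcup_i C_i$; the flow is nonsingular on $W$ and transversal to $\partial W$ pointing into $W$. Hence $W$ is positively invariant, and by Poincar\'e-Hopf (for manifolds with boundary and an inward-pointing field) $\chi(W)=0$, so $W$ is a closed annulus or a M\"obius band. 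Moreover, since a boundary point of $W$ leaves $W$ in negative time, every $\omega$-limit set of $\phi|_W$ is contained in $\interior W$, and by Proposition \ref{posepsing} it contains no singular point.

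To finish I would produce a periodic orbit and invoke Proposition \ref{posexpper}. Pick any $x\in\partial W$. Its $\omega$-limit set is a nonempty, compact, connected, invariant subset of $\interior W$ free of singularities, so by the Poincar\'e-Bendixon theorem---applied directly when $W$ is an annulus, and to its orientation double cover, which is an annulus, when $W$ is a M\"obius band---$\omega(x)$ is a periodic orbit $\gamma$; since $\gamma\subset\interior W$ we have $x\notin\gamma$. As $\dist(\phi_t(x),\gamma)\to0$, for the constant $\delta>0$ given by Proposition \ref{posexpper} for $\gamma$ there is $T>0$ with $\phi_{\R^+}(\phi_T(x))\subset B_\delta(\gamma)$ and $\phi_T(x)\notin\gamma$; Proposition \ref{posexpper} then forces $\phi_T(x)$ to be periodic, so $\omega(\phi_T(x))$ equals its own orbit. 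But $\omega(\phi_T(x))=\omega(x)=\gamma$, whence $\phi_T(x)\in\gamma$, a contradiction. This disposes of the source case and proves $\sing(\phi)=\emptyset$. The main obstacle is exactly this source case: a source carries no stable separatrix and is invisible to the forward-time condition defining positive kinematic expansiveness, so it cannot be ruled out locally; one must instead extract a periodic attractor from the global dynamics of the trapping region $W$ and contradict Proposition \ref{posexpper}.
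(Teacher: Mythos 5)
Your overall strategy is sound and it ends exactly where the paper's proof ends: producing a periodic orbit $\gamma$ that is the $\omega$-limit of a point not on $\gamma$, and contradicting Proposition \ref{posexpper}. The reduction to the case where every singularity is a source also matches the paper (the paper gets it in one line from Proposition \ref{posepsing}; your sector-by-sector analysis via Proposition \ref{propSectores} is a more detailed version of the same step). Where you genuinely diverge is in how the periodic orbit is produced. The paper stays local: it takes $x$ in the basin of repulsion of the source, picks $y\in\omega(x)$ (regular by Proposition \ref{posepsing}), and uses a cross-section through $y$ together with Lemma \ref{peixoto2} to show the return times are bounded and $y$ is periodic --- no global topology, no transversal circles, no Euler characteristic. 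You instead excise transversal disc neighborhoods of all the sources, use an index argument to force $\chi(W)=0$, identify $W$ as an annulus or M\"obius band, and invoke Poincar\'e--Bendixson (directly, or on the orientation double cover). Your route is correct in outline but buys the conclusion with heavier machinery, and that machinery imports hypotheses the paper's argument never needs.

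Three points in your version need more care. First, there is no vector field here, only a continuous flow, so ``Poincar\'e--Hopf for an inward-pointing field'' does not literally apply; you should either argue via the Lefschetz fixed-point theorem applied to $\phi_t|_W$ for small $t$ (using positive invariance of $W$ and the absence of singularities and of small-period fixed points), or invoke Gutierrez's smoothing, to get $\chi(W)=0$. Second, the existence of arbitrarily small Jordan curves around a $C^0$ source that are local cross sections crossed coherently outward is a genuine lemma; your appeal to Propositions \ref{propSectores} and \ref{cubrimiento} (``surrounded by parabolic flow boxes'') is the right idea but only a sketch --- one must assemble the transversal outer edges of the parabolic boxes into a Jordan curve and check the crossing direction is consistent. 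Third, the paper applies this proposition to compact surfaces with boundary (the annulus and M\"obius band appear in Theorem \ref{teoPosExpSup}), and then $\partial W$ also contains the invariant curves of $\partial S$, so your claims ``$\partial W=\bigcup_i C_i$'' and ``every boundary point of $W$ leaves $W$ in negative time'' fail as stated; the argument survives (an $\omega$-limit contained in a boundary circle free of singularities is itself a periodic orbit, and otherwise Poincar\'e--Bendixson applies as before), but this case must be addressed. None of these is a wrong turn --- each is repairable --- but as written they are gaps relative to the generality in which the proposition is used; the paper's basin-of-repulsion argument avoids all three.
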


\begin{proof}
 By contradiction assume that $p\in S$ is a singular point. 
 By Proposition \ref{posepsing} we have that $p$ must be a repeller. 
 Consider the open set 
 $$U=\left\{x\in S:\lim_{t\to-\infty}\phi_t(x)=p\right\}.$$
 We will show that $\partial U$ is a periodic orbit. 
 Take $x\in U$, $x\neq p$. Consider $y\in\omega(x)$. 
 By Proposition \ref{posepsing} we have that $y$ is a regular point. 
 Let $l$ be a compact local cross section with $y$ as an extreme point. 
 Assume that $\phi_{\R^+}(x)$ cuts $l$ infinitely many times and
 denote by $x_1,x_2,\dots$ the cuts of the positive trajectory of $x$ with $l$. 
 Define 
 $$V=\{z\in l:\phi_{\R^+}(z)\cap l\neq\emptyset\}.$$
 We will show that the arc $[x_1,x_2]\subset l$ is contained in $V$. 
 Denote by $V_1$ the connected component of $V\cap [x_1,x_2]$ containing $x_1$. 
 We have that $V_1$ is open in $[x_1,x_2]$.
 By Lemma \ref{peixoto2} and Proposition \ref{posepsing} we have that the return time of the points in $V_1$ to $l$ is bounded. 
 Therefore, by the continuity of the flow and the compactness of $l$, the extreme points of $V_1$ are in $V_1$ and it is closed. 
 This proves that $[x_1,x_2]\subset V_1$. Analogously it can be proved that $[x_n,x_{n+1}]\subset V$. 
 Therefore every point in $l\setminus \{y\}$ returns to $l$. 
 Again, if the return time were not bounded we contradict Proposition \ref{posepsing}. 
 Therefore $y$ is a periodic point. 
 But this is a contradiction with Proposition \ref{posexpper}. 
 Then, there cannot be singular points.
\end{proof}

\begin{thm}
\label{teoPosExpSup}
Let $\phi$ be a continuous flow on a compact surface. If $\phi$ is positive kinematic expansive then it is 
topologically equivalent with one of the following models:
\begin{enumerate}
 \item A suspension of the identity of $[0,1]$. 
 \item A suspension of an orientation preserving circle homeomorphism with irrational 
 rotation number and finitely generated wandering set.
\end{enumerate}
\end{thm}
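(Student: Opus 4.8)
The plan is to combine the structural results already established in this section. By Proposition \ref{pkexpsurfsing}, a positive kinematic expansive flow $\phi$ on a compact surface $S$ has no singularities. Hence $\chi(S)=0$, so $S$ is either the torus or the Klein bottle. In either case, since $\phi$ is in particular positive separating, Theorem \ref{teoSepWan} (applied to $\phi$ or, for the positive version, re-run with $t\geq 0$) shows there are no wandering points unless... — more carefully, I would first show that $\phi$ admits a global cross section. A nonsingular flow on a compact surface of zero Euler characteristic with no singularities always has a closed transversal; one then checks that every orbit meets it, using that the $\omega$-limit set of every regular point contains no singularity (Proposition \ref{posepsing}, vacuous here) and arguing as in the standard Poincaré--Bendixson / Schwartz-type analysis on surfaces. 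Thus $\phi$ is a suspension of a homeomorphism $f$ of a compact one-manifold, i.e. of a circle (the section is a circle when $S$ is the torus or Klein bottle, since the section is connected and nonsingular flows on these surfaces admit a circle section).

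Next I would pin down $f$. The suspension being positive kinematic expansive forces, via the return-time analysis of Proposition \ref{kexpsuspension} adapted to the positive setting, that $f$ is a \emph{positively separating/expansive} circle homeomorphism with continuous period function. If $f$ preserves orientation, then either $f$ has no wandering points — whence it is conjugate to a rotation, and by Theorem \ref{NoIrrK-exp} it cannot be an irrational rotation while a rational rotation is excluded because nearby orbits have equal period (contradicting kinematic expansiveness, as in the proof of Theorem \ref{teoSusCir}) — so the only surviving possibility is that $f$ has nonempty wandering set, which by the argument in Proposition \ref{arcSusp}/Theorem \ref{teoSusCir} must be finitely generated; this gives model (2). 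If instead $f$ reverses orientation, then $S$ is the Klein bottle, $f$ has two fixed points, and the dynamics reduces to an interval homeomorphism; by Proposition \ref{arcSusp} (together with the $C^0$ classification of arc suspensions), the periodic set has finitely many components and the fixed point is not accumulated by period-two points — but I would then observe that such a flow is orientation-reversing and, after passing to the orientation double cover or by a direct argument, it is in fact topologically equivalent to a suspension of the identity of $[0,1]$ as in model (1), since the essential dynamics on the interval of wandering points is, up to time change, the model of Lemma \ref{lemaDisco-Anillo}. Finally, the torus case with $f$ conjugate to the identity also falls under model (1).

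The main obstacle I expect is the orientation-reversing / Klein bottle case: one has to be careful that a positive kinematic expansive flow there really does reduce, up to topological equivalence, to the identity suspension on $[0,1]$, ruling out genuinely two-sided or period-two phenomena near the fixed orbit. The key leverage is Proposition \ref{posexpper}: any orbit staying near a periodic orbit must itself be periodic, which combined with Proposition \ref{posepsing} severely constrains the return map; this is exactly what collapses the reversing case onto model (1). The orientation-preserving torus case, by contrast, is essentially a direct citation of Theorem \ref{teoSusCir} and Theorem \ref{NoIrrK-exp}, so little new work is needed there. A secondary technical point is establishing the global section; I would handle it by the flow-box covering of Proposition \ref{cubrimiento} (now with no singular boxes) and a standard surgery to patch local sections into a global closed transversal, which is routine on surfaces of zero Euler characteristic once singularities are absent.
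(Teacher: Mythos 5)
Your reduction to closed surfaces is where the argument breaks. From ``no singularities'' you pass to $\chi(S)=0$ and conclude $S$ is the torus or the Klein bottle, but the theorem is about compact surfaces possibly with boundary: the annulus and the M\"obius band also carry nonsingular flows (tangent to the boundary), and model (1) of the statement --- the suspension of the identity of $[0,1]$ --- is precisely a flow on the compact annulus, as in Example \ref{annulus}. So under your case split model (1) could never arise, which already contradicts the statement you are proving. The paper instead lists the four admissible surfaces (torus, Klein bottle, annulus, M\"obius band) and then splits on whether a periodic orbit exists: if $\gamma$ is a periodic orbit, Propositions \ref{posexpper} and \ref{posepsing} together with Lemma \ref{peixoto2} force \emph{every} orbit to be periodic, and positive kinematic expansiveness forces each periodic orbit to be two-sided (otherwise $x$ and $\phi_T(x)$, $T$ the period of $\gamma$, stay $\delta$-close in forward time without lying in a small orbit segment). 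Since nonsingular flows on the Klein bottle and the M\"obius band always have a one-sided periodic orbit, those surfaces are excluded, and the all-periodic torus is excluded as well; what survives is the annulus, giving model (1). The torus without periodic orbits is a suspension of a circle homeomorphism and Theorem \ref{teoSusCir} gives model (2).

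Your treatment of the orientation-reversing / Klein bottle case is also incorrect in itself: a flow on the Klein bottle cannot be \emph{topologically equivalent} to a suspension of the identity of $[0,1]$, since topological equivalence requires a homeomorphism of phase spaces and the Klein bottle is not homeomorphic to the annulus. The correct conclusion is not that this case collapses onto model (1) but that it cannot occur: an orientation-reversing circle homeomorphism has fixed points, hence the suspended flow has a periodic orbit whose neighborhood is a M\"obius band (the orbit is one-sided), and the two-sidedness argument above rules this out. Separately, your appeal to a global closed transversal met by every orbit is not needed and is not justified as stated (on the annulus with all orbits periodic it is fine, but in general you would have to prove it); the paper avoids this by the periodic/non-periodic dichotomy and by citing the known fact that nonsingular flows on the torus without periodic orbits are suspensions. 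The torus subcase of your proposal (citation of Theorems \ref{teoSusCir} and \ref{NoIrrK-exp}) is essentially right, but note that Theorem \ref{NoIrrK-exp} is a $C^1$ statement and is not needed at the $C^0$ level here: the minimal case is simply absorbed into model (2) via Theorem \ref{teoSusCir}'s hypothesis on the wandering set.
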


\begin{proof}

By Proposition \ref{pkexpsurfsing} we only have to consider flows without singularities.
It is known that the only surfaces admitting such flows are: the torus, the annulus, the Klein's bottle and the Moebius band. 
Suppose first that $\phi$ has a periodic orbit $\gamma$. 
If there is $x\notin\gamma$ such that $\phi_{-t}(x)\to\gamma$ as $t\to\infty$ then, arguing as in the proof of 
Proposition \ref{pkexpsurfsing}, 
we can prove that $\omega(x)$ is a periodic orbit. 
But this contradicts Proposition \ref{posexpper}. 
Therefore, every orbit close to $\gamma$ must be periodic. 
Now, applying Lemma \ref{peixoto2} we have that every orbit is periodic because there are no singular points. 
Notice that $\gamma$ must be two-sided, i.e., if $U$ is a tubular neighborhood of $\gamma$ 
then $U\setminus\gamma$ has two components. This is because, if this were not the case, then 
if $T$ is the period of $\gamma$ and $x$ is close to $\gamma$ then $x$ and $y=\phi_T(x)\neq x$ would contradict 
kinematic expansiveness. Now recall that the Moebius band and the Klein bottle always have periodic orbits. 
Therefore $S$ must be orientable. 
Also, the torus does not admit a kinematic expansive flow whit every orbit being periodic. 
Therefore $S$ must be be an annulus. 

Now suppose that $S$ is the torus and $\phi$ has no periodic orbits. 
In this case it is known that $\phi$ is a suspension. 
Thus, we conclude by Theorem \ref{teoSusCir}.
\end{proof}

\begin{thm}
 If $\phi$ is a $C^2$ positive kinematic expansive flow on a compact surface then $\phi$ is a suspension of the identity of $[0,1]$ and 
 $S$ is an annulus.
\end{thm}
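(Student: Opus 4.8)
The plan is to combine Theorem \ref{teoPosExpSup} with Denjoy's theorem. Assume $\phi$ is a $C^2$ positive kinematic expansive flow on a compact surface $S$. By Theorem \ref{teoPosExpSup}, $\phi$ is topologically equivalent either to (1) a suspension of the identity of $[0,1]$ on an annulus, or to (2) a suspension of an orientation preserving circle homeomorphism with irrational rotation number and finitely generated wandering set on the torus. In case (1) there is nothing more to prove, so the whole point is to show that the $C^2$ hypothesis rules out case (2).

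Suppose we are in case (2). The properties ``no singularities'', ``no periodic orbits'', ``the phase space is $T^2$'' and ``the wandering set is non-empty'' are all invariant under topological equivalence, and the model in case (2) enjoys all of them (it has non-empty wandering set because the suspended homeomorphism is non-minimal, by Theorem \ref{teoSusCir}). Hence $\phi$ itself is a $C^2$ nonsingular flow on $T^2$ with no periodic orbits and with non-empty wandering set. Such a flow admits a $C^2$ global cross section $C\cong S^1$ (classical for $C^2$ flows on the torus without singularities and periodic orbits; see \cite{Hartman}), and its first return map $f\colon C\to C$ is a $C^2$ circle diffeomorphism. Since $\phi$ has no periodic orbits, $f$ has irrational rotation number; and since $\phi$ has non-empty wandering set, so does $f$.

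But by Denjoy's theorem (see \cite{Hartman}) a $C^2$ circle diffeomorphism with irrational rotation number is topologically conjugate to a rotation, hence minimal, hence has empty wandering set --- a contradiction. Therefore case (2) is impossible, $S$ is an annulus, and $\phi$ is (topologically equivalent to) a suspension of the identity of $[0,1]$.

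The Denjoy input is off the shelf; I expect the only delicate point to be the reduction in the middle paragraph: one must verify that the $C^2$ structure of the flow really produces a $C^2$ return map (via smooth dependence on initial conditions together with the implicit function theorem for the return time, which is $C^2$ by transversality) and that a $C^2$ closed transversal meeting every orbit exists. These are standard for $C^2$ flows, but they are precisely the steps that use $\phi\in C^2$ rather than merely $\phi\in C^1$ --- without it the statement fails, since $C^1$ Denjoy diffeomorphisms exist. One should also remark that this is the flow analogue of Theorem \ref{NoIrrK-exp}, except that here the suspended map is a Denjoy-type diffeomorphism rather than a rotation, so one invokes Denjoy's theorem directly in place of the Denjoy--Koksma estimate of \cite{AvilaKocsard}.
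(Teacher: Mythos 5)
Your argument is correct (granting the standard facts you flag about smooth global sections and $C^2$ return maps), but it is not the route the paper takes, and the difference is worth spelling out. The paper also starts from Theorem \ref{teoPosExpSup}, but it disposes of case (2) in two steps: first it invokes Gutierrez's results \cite{Gutierrez} to conclude that a $C^2$ flow as in item (2) would have to be a \emph{minimal} flow on the torus, and then it excludes minimality by Theorem \ref{NoIrrK-exp}, i.e.\ by the improved Denjoy--Koksma estimate of \cite{AvilaKocsard} applied to the $C^1$ return time. You instead exploit the non-minimality that is already encoded in item (2): the finitely generated (hence non-empty) wandering set, together with ``no periodic orbits'' and ``no singularities'', transfers to $\phi$ by topological equivalence, and then a $C^2$ global section plus the implicit function theorem turns $\phi$ into a suspension of a $C^2$ circle diffeomorphism with irrational rotation number and wandering points, contradicting classical Denjoy. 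What each approach buys: yours is more elementary --- classical Denjoy replaces the Avila--Kocsard machinery entirely --- but it leans completely on the fact that item (2) of Theorem \ref{teoPosExpSup} excludes the minimal case (which in turn rests on Theorem \ref{teoSusCir} being stated only for non-minimal homeomorphisms; recall the paper leaves the $C^0$ minimal torus case open as a Question). The paper's two-step argument is insensitive to that boundary case: even a hypothetical minimal kinematic expansive flow is killed by Theorem \ref{NoIrrK-exp} once the return time is $C^1$, which is why the paper pays the price of the stronger ergodic-theoretic input. Your closing remark is accurate: the place where $C^2$ (as opposed to $C^1$) is genuinely used in your proof is the existence of a smooth closed transversal meeting every orbit and the regularity of the return data --- the same fact the paper uses implicitly when it asserts in the proof of Theorem \ref{teoPosExpSup} that a torus flow without periodic orbits is a suspension --- and without it the statement would indeed fail because of $C^1$ Denjoy counterexamples. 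Citing \cite{Hartman} for the existence of the global section is a little loose (Hartman essentially assumes a transversal meridian is given), but the fact is classical and the gap is cosmetic.
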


\begin{proof}
By Theorem \ref{teoPosExpSup} we have to show that $\phi$ cannot satisfy item (2) in this Theorem. 
By \cite{Gutierrez} and assuming that $\phi$ satisfies item (2) we have that 
$\phi$ is a minimal flow on the torus. 
By Theorem \ref{NoIrrK-exp} we know that $\phi$ cannot be expansive. 
This contradiction proves the theorem.
\end{proof}

Let us introduce a natural definition.

\begin{df}
 A flow is \emph{positive strong kinematic expansive} if every time change is positive kinematic expansive.
\end{df}

An example of such flow is the horocycle flow of a surface of negative curvature, this is proved in \cite{Gura}. 
The horocycle flow is defined on three-dimensional manifold. We now apply our results to conclude that such flows do not exist 
on surfaces.

\begin{cor}
 There are no positive strong kinematic expansive flows of surfaces.
\end{cor}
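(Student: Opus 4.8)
The plan is to feed $\phi$ into the classification of positive kinematic expansive surface flows. Suppose $\phi$ is positive strong kinematic expansive on a compact surface $S$. Then $\phi$ is itself positive kinematic expansive, so by Theorem \ref{teoPosExpSup} it is topologically equivalent either to (i) a suspension of the identity of $[0,1]$, a periodic band on the annulus, or to (ii) a suspension of an orientation preserving circle homeomorphism of irrational rotation number with finitely generated wandering set, a flow on the torus. Since positive kinematic expansiveness is a flow-equivalence invariant and time changes correspond to time changes under a topological equivalence, it is enough to produce, for each of these two models, one time change that is not positive kinematic expansive; this contradicts the defining property of $\phi$.

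The observation that makes each sub-case cheap is that a positive kinematic expansive flow is separating --- apply the definition with, say, $\epsilon = 1$ and note that a bound on $\dist(\psi_t(x),\psi_t(y))$ for all $t\in\R$ is a fortiori a bound for all $t\geq 0$ --- and also, trivially, kinematic expansive. Hence any time change of $\phi$ that fails to be separating, or even fails to be kinematic expansive, already gives the desired contradiction.

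For model (i) I will use Example \ref{annulus} itself: its time change with constant angular speed turns every orbit into a periodic orbit of a single common period, so nearby points on distinct concentric circles stay uniformly close for all time without sharing an orbit --- this time change is not kinematic expansive. For model (ii) I split on whether the underlying circle homeomorphism $f$ is a rotation. If it is not, its wandering set is non-empty, hence $\phi$ has wandering points, and Theorem \ref{teoSepWan} hands us a time change of $\phi$ that is not separating. If $f$ is (conjugate to) an irrational rotation, then $\phi$ is a minimal torus flow and is topologically equivalent to a suspension of a rotation; reparametrizing that suspension to have constant return time yields a flow topologically equivalent to a linear irrational flow on $\T^2$, whose generic orbits are parallel lines at a fixed positive distance and which is therefore not separating. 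Transporting this reparametrization back along the topological equivalence produces the required time change of $\phi$.

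The only genuinely delicate point is this last bit of bookkeeping: verifying that Theorem \ref{teoPosExpSup} does cover the borderline minimal-rotation case and that ``replace the return time by a constant'' is a bona fide time change, and then checking that pulling such a time change back through a topological equivalence preserves the failure of (positive) kinematic expansiveness --- which it does, by flow-equivalence invariance. No new dynamical idea is needed; the corollary simply combines the surface classification with the fact that each surviving model is destroyed by an explicit reparametrization.
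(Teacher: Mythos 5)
Your argument is correct, but it is a genuinely different (and considerably heavier) route than the paper's. The paper's proof is two lines: since positive kinematic expansiveness implies kinematic expansiveness, a positive strong kinematic expansive flow is in particular strong kinematic expansive, so Theorem \ref{skexpsurf} forces saddle singularities whose separatrices are dense (in particular $\sing\neq\emptyset$), while Proposition \ref{pkexpsurfsing} says a positive kinematic expansive surface flow has no singularities at all --- contradiction. You instead invoke the full classification of Theorem \ref{teoPosExpSup} and then destroy each model by an explicit time change: the constant-period reparametrization of the periodic band, Theorem \ref{teoSepWan} in the Denjoy (wandering) case, and the constant-return-time suspension (linear irrational flow) in the minimal case. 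The reductions you worry about do go through: if $h$ is a topological equivalence from $\phi$ to a model $\psi$ and $\psi'$ is a time change of $\psi$, then $h^{-1}\circ\psi'_t\circ h$ is a time change of $\phi$ flow-equivalent to $\psi'$, and ``not separating'' indeed kills positive kinematic expansiveness. Your case split should be read as ``wandering set non-empty'' versus ``minimal (hence conjugate to an irrational rotation by Poincar\'e)'' rather than ``rotation or not'', and it is good that you treat the minimal case by hand, since Theorem \ref{teoSusCir} (hence the literal statement of Theorem \ref{teoPosExpSup}) excludes minimal circle homeomorphisms; your explicit handling closes that borderline. What the paper's route buys is brevity and complete independence of the classification and of that delicate case; what yours buys is a concrete exhibition, in each model, of the reparametrization that fails, at the cost of invoking Theorems \ref{teoPosExpSup} and \ref{teoSepWan} where the paper needs only Proposition \ref{pkexpsurfsing} and Theorem \ref{skexpsurf}.
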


\begin{proof}
 It follows by Proposition \ref{pkexpsurfsing} and Theorem \ref{skexpsurf}.
\end{proof}

\subsection{Minimal positive expansiveness}
In this section we consider an adaptation of an example in \cite{KS} to show 
that minimal positive kinematic expansive flows may not be trivial. 
We will suspend a minimal expansive homeomorphism
of a Cantor set under a specific return time function. 
The example also shows that
a positive kinematic expansive flow may not be negative expansive.

\begin{ejp}
Let $\theta\in \R$ be an irrational number and consider the rotation 
$R\colon[0,1)\to[0,1)$
given by $R(x)=x+\theta \mod 1$. By
\emph{splitting} along the orbit of 0 under $R$ we obtain a minimal
expansive homeomorphism $f\colon l\to l$ on a Cantor set $l$. Now
let $x_n=R^n (0)$ and choose an increasing sequence of positive
integers $n_j$ such that $x_{n_j}$ is strictly decreasing to 0.
Next find a sequence $\delta_j$ decreasing to 0 such that,
defining $I_j=[x_{n_j},x_{n_j}+\delta_j]$, $I_j\cap
I_k=\emptyset$ if $j\neq k$. 
Define a function $T\colon
[0,1)\to\R^+$ by the conditions: 
\begin{enumerate}
 \item $T(x_{n_j})=1+1/j$ and $T(x_{n_j}+\delta_j)=1$, 
 \item extend by linearity between the end points of each $I_j$ and 
 \item $T(x)=1$ otherwise. 
\end{enumerate}
Note that since the discontinuities of $T$
occurs at the points $x_{n_j}$, $T$ can be extended to a
continuous function on the Cantor set $l$.

Let $\phi$ be the suspension flow of $R\colon l\to l$ under the time function $T$. 
Given $x\in [0,1)$ in the orbit of 0 under $R$ denote by $x^+,x^-\in l$ the splitting points of $x$.
Note that
$\dist(f^k(0^-),f^k(0^+))\to 0$ as $k\to \pm\infty$ and that there
exists $\delta>0$ such that if 
$x\notin\{f^i(0^\pm):i\geq 0\}$
and $y\neq x$ then there is $k\geq 0$ such that $\dist
(f^k(x),f^k(y))>\delta$. If $T_n\colon l\to \R$ is given by
$\phi_{T_n(x)}(x)=f^n(x)$ for all $x\in l$ then we have
$$T_{n_j}(0^-)-T_{n_j}(0^+)=\sum_{k=1}^j 1/k.$$ 

By Proposition \ref{kexpsuspension} (the arguments in its proof) we conclude that
$\phi$ is positive kinematic expansive. Note that
$\dist(\phi_t(0^-),\phi_t(0^+))\to 0$ as $t\to-\infty$ and then
$\phi$ is not negative kinematic expansive.
\end{ejp}

\subsection{Kinematic bi-expansive flows}
\label{secBiExp}
In this brief section we wish to remark the non-existence of singularities for a 
flow being simultaneously positive and negative kinematic expansive. 
Let $\phi$ be a continuous flow on a compact metric space $X$ and
define the inverse flow $\phi^{-1}$ as $\phi^{-1}_t=\phi_{-t}$.
\begin{df}
 We say that $\phi$ is \emph{kinematic bi-expansive} if $\phi$ and $\phi^{-1}$ are positive kinematic expansive.
\end{df}

Examples of such flows are the periodic annulus (Example \ref{annulus}) and 
the horocycle flow of a negatively curved surface \cite{Gura}.

\begin{prop}
 If $\phi$ is a kinematic bi-expansive flow on a compact metric space $X$ 
 then every singularity is an isolated point of the space. 
 Therefore, if $X$ is connected with more than one point then there are no singularities.
\end{prop}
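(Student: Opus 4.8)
The plan is to show that a singularity $p$ of a kinematic bi-expansive flow cannot be accumulated by points of $X$, so it is an isolated point of the space; the last assertion is then immediate since a connected space with more than one point has no isolated points. Suppose for contradiction that $p$ is a singularity and $p$ is not isolated, so there is a sequence $x_n\to p$ with $x_n\neq p$. Since $\phi$ is positive kinematic expansive, Proposition \ref{posepsing} applies to the \emph{positive} flow: no regular point has $p$ in its $\omega$-limit set, that is, $\phi_t(x)\to p$ as $t\to+\infty$ forces $x=p$. Since $\phi^{-1}$ is also positive kinematic expansive, the same proposition applied to $\phi^{-1}$ gives: no regular point has $p$ in its $\alpha$-limit set, i.e. $\phi_t(x)\to p$ as $t\to-\infty$ forces $x=p$.

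Now I would combine these two one-sided statements with a compactness argument to derive the contradiction. Fix a small closed ball $B=B_r(p)$ whose complement is nonempty. For each regular $x_n$ near $p$, by the two facts above the forward orbit of $x_n$ eventually leaves $B$ and the backward orbit of $x_n$ eventually leaves $B$; let $t_n^+>0$ be the first forward exit time and $t_n^-<0$ the first backward exit time, both finite. Passing to a subsequence, $\phi_{t_n^+}(x_n)\to z^+\in\partial B$ and $\phi_{t_n^-}(x_n)\to z^-\in\partial B$. Since $x_n\to p$ and $p$ is a singularity, continuity of the flow on the compact set $[-t,t]\times X$, together with $x_n\to p$, forces $t_n^+\to+\infty$ and $t_n^-\to-\infty$ (on any fixed time window the orbit of $x_n$ stays in $B$ for large $n$). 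Then a standard diagonal/limit argument along these diverging times produces a regular point whose $\omega$-limit set contains $p$ or whose $\alpha$-limit set contains $p$: concretely, choosing the points $\phi_{s}(x_n)$ for suitable intermediate $s$ and taking limits, one extracts $z^+$ (or $z^-$) lying on a full orbit that limits to $p$ in forward (resp.\ backward) time, contradicting the consequence of Proposition \ref{posepsing} obtained above.

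The main obstacle is making the limiting argument clean: one must ensure the intermediate points do not all collapse to $p$ or escape $B$ prematurely, so that the limit orbit is genuinely regular and genuinely asymptotic to $p$. I would handle this by working with the \emph{last} entry time into $B_{r/2}(p)$ before the exit time $t_n^+$, so that the resulting limit orbit passes through both $\partial B_{r/2}(p)$ and has $p$ in its closure, hence is nonconstant (regular) and has $p$ in its $\omega$-limit. This is exactly the type of argument already used in the proof of Proposition \ref{pkexpsurfsing}, so the details are routine. Once the contradiction is reached, $p$ must be an isolated point of $X$, and if $X$ is connected with more than one point it has no isolated points, so $\fix(\phi)=\emptyset$.
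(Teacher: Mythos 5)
Your overall skeleton (exit times from a small ball diverge, extract a limit point on the boundary sphere whose backward semi-orbit is trapped in the ball) is sound and is more detailed than the paper's own proof, which simply observes that positive expansiveness makes singularities repellers, negative expansiveness makes them attractors, hence they are isolated. But there is a genuine gap in how you close the argument, and a smaller instance of the same error earlier. You twice deduce more from Proposition \ref{posepsing} than it gives. First, the fact that the forward (resp.\ backward) orbit of $x_n$ must leave $B=B_r(p)$ does not follow from ``$p\in\omega(x)\Rightarrow x=p$'': an orbit could remain in $B$ forever without accumulating on $p$. (It does follow, immediately, from positive separating applied to the pair $(x_n,p)$, since $\phi_t(p)=p$; you just need $r$ below the separating constant.) Second, and more seriously, your final step claims the limit construction ``produces a regular point whose $\omega$-limit set contains $p$ or whose $\alpha$-limit set contains $p$.'' It does not: what you actually get is a point $z^+$ with $\dist(z^+,p)=r$ whose entire backward orbit stays in $\overline{B}_r(p)$, and its $\alpha$-limit set could a priori be some invariant set in the ball avoiding $p$. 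Your proposed repair (last entry time into $B_{r/2}(p)$) does not fix this: the resulting limit orbit is only confined to an annulus or a ball, and nothing forces it to be asymptotic to $p$, so the appeal to Proposition \ref{posepsing} is not warranted.

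The repair is a one-liner, and it bypasses Proposition \ref{posepsing} entirely: since $\phi_t(p)=p$ for all $t$, the trapped semi-orbit gives $\dist(\phi_{-t}(z^+),\phi_{-t}(p))\le r<\delta$ for all $t\ge 0$, so negative kinematic expansiveness (positive expansiveness of $\phi^{-1}$) applied to the pair $(z^+,p)$ forces $z^+=\phi_s(p)=p$, contradicting $\dist(z^+,p)=r>0$; symmetrically with $z^-$ and positive expansiveness. In other words, the efficient move throughout is to pair points with the singularity itself and use the definition of (positive/negative) kinematic expansiveness directly, rather than routing through $\omega$- or $\alpha$-limit sets. With that substitution your compactness argument is correct and gives a complete, if longer, proof than the paper's attractor/repeller remark.
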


\begin{proof}
 Positive expansiveness implies that singular points are repellers 
 and negative expansiveness implies that they are attractors. 
 Then, singularities are isolated points of the space.
\end{proof}

\section{Robust kinematic expansiveness}
\label{secRobust}

In this section we study the persistence of expansiveness under perturbations of the velocity field in the $C^1$-topology. 
On surfaces there are no robust geometric expansive flows because small $C^1$-perturbations gives rise to periodic orbits (see \cite{Pe}) 
and this is an obstruction to geometric expansiveness (see \cite{Artigue}). 
As a corollary we have that there are no robust geometric expansive flows on 
three dimensional manifolds with non-empty boundary.

On surfaces we will consider robust kinematic expansiveness in the conservative framework. 
On manifolds of dimension greater than two we will prove that robust kinematic expansiveness is equivalent with geometric expansiveness.

\subsection{Positive expansiveness in the annulus}

Let $A\subset \R^2$ be the annulus bounded by two simple closed $C^1$ curves as in Figure \ref{figuraAnillo}.
Denote by $\X^1_\mu(A)$ the vector space of $C^1$ vector fields $X$ defined in $A$ such that 
\begin{enumerate}
 \item $\dive(X)=0$ and 
 \item $X$ is parallel to $\partial A$ in $\partial A$.
\end{enumerate}

\begin{figure}[htbp]
\begin{center}
\includegraphics{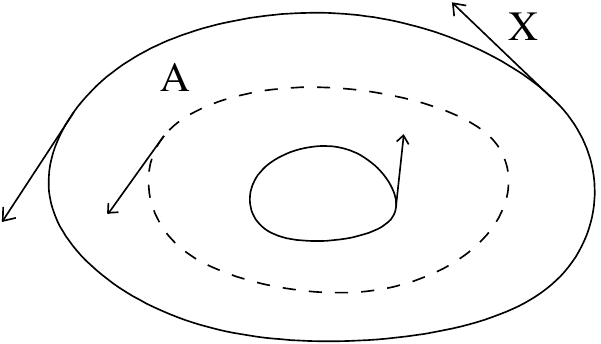}
    \caption{A vector field in the annulus tangent to the boundary.}
   \label{figuraAnillo}
\end{center}
\end{figure}
\begin{df}
 We say that $X\in \X^1_\mu(A)$ is \emph{robustly positive kinematic expansive} if there is a $C^1$-neighborhood of $X$ in 
 $X^1_\mu(A)$ such that every vector field in this neighborhood gives rise to a positive kinematic expansive flow.
\end{df}

If $X=(a,b)$ denote $X^\perp=(-b,a)$.

\begin{thm}
 Let $X\in\X^1_\mu(A)$ be a non-vanishing vector field and define $Z=X^\perp/\|X\|^2$. 
 If 
 \begin{equation}
  \label{eqRobexpAnillo}
  \dive(Z)\neq 0
 \end{equation}
 on every point of $A$ then $X$ is
 robustly positive kinematic expansive.
\end{thm}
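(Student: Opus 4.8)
The plan is to use the quantity $Z = X^\perp/\|X\|^2$ as an infinitesimal model for the transverse spreading of orbits, and to show that the sign condition $\dive(Z)\neq 0$ forces the return-time functions to be strictly monotone along transversals in a way that survives $C^1$-perturbations. First I would set up a global transversal: since $X$ is non-vanishing and tangent to $\partial A$, the flow of $X$ on the annulus is either periodic (every orbit a closed curve encircling the hole) or has no periodic orbits; in the conservative ($\dive X = 0$) setting on the annulus I would argue using a cross-section $l$ joining the two boundary components transversal to $X$, with first-return map $f\colon l\to l$ and return-time function $T\colon l\to\R^+$. The key computation is that the \emph{variation of the return time across the transversal} is governed by $Z$: if $s$ is the arc-length parameter on $l$, then the derivative of the Poincar\'e map and the derivative of $T$ are controlled by the integral of $\dive(Z)$ (equivalently of the relevant component of $\nabla\times$, since in two dimensions $\dive(X^\perp)$ is the curl of $X$) along orbit segments between consecutive crossings of $l$. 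Concretely, because $X$ is divergence-free the flow preserves area, so the width of a thin flow tube along $l$ is reproduced, and the only way orbits starting at nearby points $x,y\in l$ can fail to be separated is if $|T_n(x)-T_n(y)|$ stays bounded; the hypothesis $\dive(Z)\neq 0$ everywhere, together with compactness of $A$, gives a uniform lower bound $\dive(Z)\geq c>0$ (or $\leq -c<0$), and integrating this along the (area-normalized) orbit segments yields $|T_{n}(x)-T_{n}(y)|\to\infty$ as $n\to+\infty$ for $x\neq y$ in $l$. By Proposition~\ref{kexpsuspension} (item 3) this is exactly positive kinematic expansiveness of the suspension, hence of the flow.

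Next I would address robustness. Let $Y$ be $C^1$-close to $X$ in $\X^1_\mu(A)$. Since $X$ is non-vanishing and $A$ is compact, $Y$ is also non-vanishing for $Y$ close enough, still tangent to $\partial A$, still divergence-free; so $Y$ has the same kind of cross-section $l$ (one can use the \emph{same} arc $l$, which remains transversal to $Y$), with its own return map $f_Y$ and return time $T_Y$. The map $X\mapsto Z_X = X^\perp/\|X\|^2$ and $X\mapsto \dive(Z_X)$ are continuous from $\X^1_\mu(A)$ (with the $C^1$ topology) into $C^0(A)$, because $X$ is bounded away from zero on the compact set $A$; hence $\dive(Z_Y)$ stays uniformly bounded away from $0$, say $|\dive(Z_Y)|\geq c/2$, for all $Y$ in a fixed $C^1$-neighborhood of $X$. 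The same integral estimate as above then applies to $Y$ with the same qualitative conclusion $|T_{Y,n}(x)-T_{Y,n}(y)|\to\infty$, and Proposition~\ref{kexpsuspension} gives positive kinematic expansiveness of the flow of $Y$. This establishes robust positive kinematic expansiveness.

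The main obstacle I anticipate is making the estimate ``$\dive(Z)$ controls the spread of return times'' precise and uniform. One has to relate $T_n(x)-T_n(y)$ — a sum of return-time differences — to an integral of $\dive(Z)$ along orbit segments, and this requires choosing the transversal parameter correctly (arc length along an \emph{area-normalized} family of transversals, so that the divergence-free condition makes the transverse width exactly constant along each flow tube), and then identifying the first-order rate of change of the crossing time with the transverse component of $Z$. A clean way is to fix the transversal $l$ once and for all, parametrize it by the flux coordinate $w$ (so that the area between $l$ and $f_t(l)$ is linear in $w$), note that $w$ is then a conjugacy coordinate making $f$ translation-like or identity-like, and compute $\partial_w T$ directly as a line integral of $\dive(Z)$ along the orbit from $x$ to $f(x)$; summing telescopes (using $\dive X=0$ so that $w$ is invariant) into an integral along a long orbit segment whose $\dive(Z)$-integral grows at least linearly by the uniform bound. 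A secondary subtlety is the degenerate case where the flow of $X$ is periodic: there the return map is the identity and one must check that $\dive(Z)\neq 0$ forces the period function to be strictly monotone on $l$, which is again the same line-integral computation and is exactly the mechanism already seen in Example~\ref{annulus}; I would handle it as a special case of the same estimate.
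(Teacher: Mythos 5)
Your proposal rests on the same mechanism as the paper's proof -- the sign condition on $\dive(Z)$ forces the return--time/period function along a transversal to be strictly monotone, and robustness follows because $|\dive(Z_Y)|$ stays uniformly bounded away from zero for $Y$ $C^1$-close to $X$ -- but your execution of the key step differs and is heavier than necessary, and your case structure is slightly off. First, the case analysis: since $\dive(X)=0$, $X$ is non-vanishing and the annulus admits no nontrivial recurrence, \emph{every} orbit is closed (a non-closed orbit would spiral toward its $\omega$-limit set, producing wandering points and contradicting area preservation). So what you call the ``degenerate case where the flow is periodic'' is in fact the only case: the flow is a suspension of the identity of a global cross section, the return map is the identity, and the whole problem reduces to showing that distinct orbits have distinct periods, after which $T_n(x)-T_n(y)=n\bigl(T(x)-T(y)\bigr)$ diverges and Proposition \ref{kexpsuspension} (or the remark on suspensions of the identity with injective return time) gives positive kinematic expansiveness; the ``main'' case you set up, with a nontrivial return map and telescoping sums in flux coordinates, never occurs. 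Second, the step you flag as the main obstacle -- relating the transverse variation of the return time to $\dive(Z)$ -- has a closed form that removes the need for flux coordinates and telescoping: along a closed orbit $\gamma$ the outward unit normal $n$ in the direction of $Z$ satisfies $Z\cdot n=1/\|X\|$, so the period is exactly the flux $T(\gamma)=\oint_\gamma Z\cdot n\,d\gamma$, and Green's theorem applied to the region $R$ between two orbits gives $T(\gamma_2)-T(\gamma_1)=\iint_R \dive(Z)\,dx\,dy\neq 0$, since $\dive(Z)$ is continuous, nonvanishing, hence of constant sign on $A$. This is the paper's argument; your differential version ($\partial_w T$ as a weighted line integral of $\dive(Z)$) is the derivative of this identity and could be made to work, but it is exactly the bookkeeping you anticipated and left unfinished, so as written the proposal is a plan whose central estimate is only sketched. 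Your robustness paragraph (tangency and nonvanishing persist, $Y\mapsto\dive(Z_Y)$ is continuous from the $C^1$ into the $C^0$ topology, compactness gives a uniform lower bound) matches the paper's one-line observation that condition (\ref{eqRobexpAnillo}) cannot be destroyed by a small $C^1$ perturbation, and is correct.
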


\begin{proof}
 Let us first recall that if $\dive(X)=0$ then there are no wandering points and since 
 $X$ has no singularities, we have that every orbit is periodic because no other kind of recurrence is possible in the annulus 
 in our hypothesis. 
 It implies that the flow is a suspension of the identity in a global cross section. 
 Then, in order to prove kinematic expansiveness it is enough to prove that different periodic orbits have different periods. 
 
 Let $\gamma$ be a periodic orbit of $X$. 
 The period of $\gamma$, denoted by $T(\gamma)$, can be calculated as follows:
 \[
  T(\gamma)=\int_\gamma \frac{1}{\|X\|}\, d\gamma=\int_\gamma Z\cdot n\,d\gamma,
 \]
where $n$ is the normal vector of $\gamma$ in the direction of $Z$. 
That is, the period of $\gamma$ is the flow of $Z$ through $\gamma$. 
Now consider two periodic orbits $\gamma_1$ and $\gamma_2$ bounding a region $R$ as in Figure \ref{figRegionR}.
\begin{figure}[htbp]
\begin{center}
\includegraphics{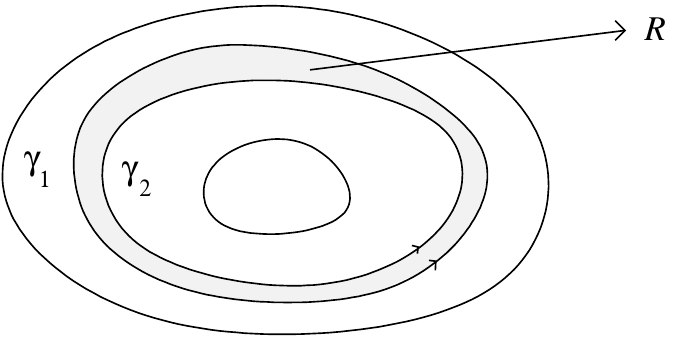}
    \caption{Region $R$ bounded by two periodic orbits.}
   \label{figRegionR}
\end{center}
\end{figure}

Applying Green's Theorem we have that 
\[
 \int_{\gamma_2} Z\cdot n\,d\gamma_2-\int_{\gamma_1} Z\cdot n\,d\gamma_1 = \iint_R \dive(Z)\, dx\, dy.
\]
And then
\[
 T(\gamma_2)-T(\gamma_1)=\iint_R \dive(Z) \, dx\, dy\neq 0.
\]
Then we have proved that different periodic orbits have different periods and then $X$ is kinematic expansive. 
It only rests to notice that condition (\ref{eqRobexpAnillo}) cannot be lost by a $C^1$ small perturbation of $X$.
\end{proof}

\begin{ejp}
 Given $r_1>0$ and $r_2>r_1$ consider the annulus $A\subset\R^2$ given by $r_1^2\leq x^2+y^2\leq r_2^2$. 
 Given a smooth non-vanishing function $f\colon\R\to\R$ define $X_f(x,y)=f(r^2)(y,-x)$, where $r^2=x^2+y^2$. 
In this case $$Z=\frac1{r^2f(r^2)}(x,y)$$ and 
\[
 \dive(Z)=-\frac{2f'}{f^2}.
\]
Therefore, $X_f$ is robust kinematic expansive in $A$ 
if $f'\neq 0$ in $[r_1,r_2]$.
\end{ejp}

\subsection{Robust expansiveness on manifolds}

Let $X$ be a $C^1$ vector field of a closed manifold $M$ of dimension $n\geq 3$. 
Assume that $M$ is endowed with a smooth structure and a smooth Riemannian metric. 
In this section we also assume that $X$ has no singularities.

\begin{df}
 We say that $X$ is $C^1$-\emph{robust kinematic (or geometric) expansive} if every 
 vector field in a suitable $C^1$-neighborhood of $X$ is kinematic (or geometric) expansive.
\end{df}

\begin{thm}
 Every $C^1$-robust kinematic expansive vector field without singularities on a closed smooth manifold is geometric expansive.
\end{thm}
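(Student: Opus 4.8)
The plan is to argue by contraposition. Assume $X$ has no singularities and is $C^1$-robustly kinematic expansive but is \emph{not} geometric expansive; I will produce, inside an arbitrarily small $C^1$-neighbourhood of $X$, a vector field whose flow is not kinematic expansive, contradicting the hypothesis. Throughout we use that, $M$ being compact and $X$ non-vanishing, there are constants $0<c\le \|X\|\le C$ on $M$, so that arclength along an orbit is comparable to elapsed time. Failure of geometric expansiveness gives $\beta>0$ and, for each $n$, points $x_n,y_n\in M$ and $h_n\in\reparam$ with $\dist(\phi_{h_n(t)}(x_n),\phi_t(y_n))<1/n$ for all $t\in\R$, while $x_n,y_n$ lie in no common orbit segment of diameter less than $\beta$.

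The first step is a limiting analysis of the reparametrizations. Comparing the arclengths of the two Fr\'echet-$(1/n)$-close orbit arcs over a bounded time window, and using the bounds $c\le\|X\|\le C$ together with the bounded geometry of the $C^1$ flow, shows that each $h_n$ is $(C/c)$-bi-Lipschitz. By Arzel\`a--Ascoli (on compact time intervals) and compactness of $M$, pass to a subsequence with $x_n\to p$, $y_n\to q$, and $h_n\to h$ uniformly on compacts, $h$ bi-Lipschitz with $h(0)=0$. Passing to the limit in the shadowing inequality yields $\phi_{h(t)}(p)=\phi_t(q)$ for all $t$; since $\phi$ is non-singular this forces $q$ to lie on the orbit of $p$, and, when $p$ is not periodic, $h=\mathrm{id}$ and $q=p$. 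Feeding this back into the ``no short orbit segment'' condition produces the structural alternative that drives the rest of the proof: either $X$ already possesses a periodic orbit of diameter at least $\beta/2$, or $p$ is a non-periodic point whose orbit returns arbitrarily close to $p$ after tracing a loop of diameter at least $\beta/2$. In particular $p$ is a non-wandering, recurrent point.

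In either case I would then perturb in the $C^1$ topology to destroy kinematic expansiveness. When no periodic orbit is available, invoke Pugh's $C^1$ Closing Lemma at the recurrent point $p$ to get a vector field $C^1$-close to $X$ with a periodic orbit $\gamma$ of diameter at least $\beta/3$. Fix a long embedded sub-segment $\Sigma$ of $\gamma$ and modify the field only inside a thin tubular neighbourhood of $\Sigma$, adding a small vector field that makes $\gamma$ transversally attracting along $\Sigma$ \emph{and} adjusts the return time so that the transverse holonomy preserves a foliation by equal-period loops; spread over the whole length of $\Sigma$ this produces a thin ``periodic band'' exactly as in Example \ref{annulus}. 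On such a band a point and its in-phase translate on a neighbouring loop stay within an arbitrarily small distance for all $t\in\R$, yet are contained in no short orbit segment, so the perturbed flow is not kinematic expansive. The crux of the whole argument is the quantitative $C^1$-control of this modification: because it is carried out over an orbit segment of large length, its $C^1$-size is $O(1/\mathrm{length}(\Sigma))$ and hence stays within the prescribed neighbourhood of $X$; turning the \emph{a priori} uncontrolled reparametrization $h_n$ into an honest, $C^1$-small vector-field perturbation is the delicate point, and it is exactly what the dimension hypothesis $n\ge 3$ (room in the normal bundle) and the $C^1$ topology make possible — and why the analogous statement is false on surfaces. This contradiction finishes the proof.
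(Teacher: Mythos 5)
Your strategy (contraposition: use the failure of geometric expansiveness to manufacture a $C^1$-small perturbation that is not kinematic expansive) is not the paper's route, and as written it contains a fatal gap at its central step. After closing up a recurrent point you have a nearby field with a periodic orbit $\gamma$, and you then claim that a perturbation of $C^1$-size $O(1/\mathrm{length}(\Sigma))$, supported in a tube around a long segment $\Sigma$ of $\gamma$, turns a neighbourhood of $\gamma$ into a ``periodic band'' as in Example \ref{annulus}. This is false unless $\gamma$ is (at most weakly) non-hyperbolic: to produce an invariant annulus of in-phase loops you must bring the transverse derivative of the return map to the identity along a whole disc, and by Franks-type arguments a $C^1$-small perturbation can only move that derivative a little; if $\gamma$ has definite hyperbolicity this is impossible, and nothing in your argument controls the hyperbolicity of the orbit produced by the Closing Lemma. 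A clean sanity check that the argument proves too much: an Anosov flow (or a suspension of an Anosov diffeomorphism) on a closed manifold of dimension at least $3$ is non-singular, has recurrent points everywhere, and is $C^1$-robustly geometric (hence kinematic) expansive; your construction, applied at any of its recurrent points, would yield arbitrarily $C^1$-close non-kinematic-expansive fields, which cannot happen. Note also that your use of the hypothesis ``$X$ is not geometric expansive'' evaporates after the limiting step, which is a symptom of the same problem: the data $(x_n,y_n,h_n)$ must be used to locate a degenerate (non-hyperbolic or non-transverse) structure, and you never do this. Two further, more technical points: the claim that each $h_n$ is $(C/c)$-bi-Lipschitz is unjustified (for a non-singular flow one only gets coarse, quasi-isometric control of the reparametrization, so you need Helly-type selection rather than Arzel\`a--Ascoli), and the ``structural alternative'' (periodic orbit of diameter $\geq\beta/2$, or recurrence of $p$ with large near-loops) does not follow from $x_n,y_n\to p$ alone, since $x_n$ and $y_n$ may lie on distinct, non-recurrent orbits that shadow each other.

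For comparison, the paper's proof goes in the opposite direction and is where the hyperbolicity issue is handled: robust kinematic expansiveness first forces every periodic orbit of every nearby field to be hyperbolic, because a non-hyperbolic periodic orbit can be perturbed (Proposition 1 of \cite{MSS}) to create an invariant annulus of periodic orbits and then, with one more perturbation, a configuration contradicting the $C^1$ rigidity statement of Proposition \ref{suspKexpIC1}. Thus $X$ is a non-singular star flow, so by \cite{GanWen} it satisfies Axiom A; a further perturbation argument gives the quasi-transversality condition; and then the characterization in \cite{MSS} yields geometric expansiveness. If you want to salvage your approach, you would have to build exactly this dichotomy (non-hyperbolic periodic orbit versus hyperbolic structure) into your perturbation step, at which point you are essentially reproducing the paper's argument.
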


\begin{proof}
Consider $X$ a $C^1$-robust kinematic expansive vector field.
Let us start proving that periodic orbits of $X$ are hyperbolic.
In Proposition 1 of \cite{MSS} it is proved (with standard perturbation techniques) 
that if a periodic orbit is not hyperbolic then there is a 
$C^1$-close vector field $Y$ with an invariant annulus $A$
filled with periodic orbits of $Y$. 
This gives a contradiction with geometric expansiveness, but in our case we have to give more arguments. 
Consider a new perturbation $Z$ such that $A$ is $Z$-invariant but with at least one non-periodic orbit.
This easily contradicts Proposition \ref{suspKexpIC1}.

Therefore, we have proved that 
every periodic orbit of every vector 
field in a suitable neighborhood of $X$ is hyperbolic. 
A vector field with this property is usually called as \emph{star flow}. 
In \cite{GanWen} it is proved (see Theorem A) that 
non-singular star flows satisfy Axiom A, i.e., periodic orbits are 
dense in $\Omega(X)$ and $\Omega(X)$ is hyperbolic.

Now we prove the quasi-transversality condition, that is: 
\begin{equation}
 \label{qtc}
 T_x W^s(x)\cap T_x W^u(x)=\{0_x\}
\end{equation}
for all $x\in M$, where $W^s(x)$ is the stable manifold and $W^u(x)$ is 
the unstable manifold of $x$ defined as usual.
For $x\in\Omega(X)$ the quasi-transversality condition holds because $\Omega(X)$ is hyperbolic. 
Consider $x\notin \Omega(X)$ and, arguing by contradiction, assume that (\ref{qtc}) does not hold.
With a 
$C^1$-perturbation $Y$ of $X$ 
we can also assume that $x$ is in the stable set of a periodic orbit $\gamma_1$ and also 
in the unstable set of a periodic orbit $\gamma_2$. 
With another perturbation $Z$ we can suppose that the intersection of the 
stable manifold of $\gamma_1$ with the unstable manifold of $\gamma_2$ and a local cross section through $x$ contains 
an arc $l$ containing $x$. 
Now it is easy to arrive to a contradiction using the arguments in the proof of Proposition \ref{suspKexpIC1}. 

Since $X$ satisfies Axiom A and the quasi-transversality condition, we can apply the results of \cite{MSS} to conclude that 
$X$ is in fact robust geometric expansive.
\end{proof}

\begin{bibdiv}
\begin{biblist}

\bib{Anosov}{article}{
author={D. V. Anosov},
title={Geodesic Flows on Closed Riemann Manifolds with Negative Curvature},
number={90},
year={1969},
journal={Proceedings of the Steklov Institute of Mathematics}}

\bib{Artigue}{article}{
author={A. Artigue}, 
title={Expansive Flows of Surfaces}, 
journal={Discrete and Continuous Dynamical Systems},
volume={33},
pages={505--525},
year={2013}}

\bib{Artigue2}{article}{
author={A. Artigue}, 
title={Positive Expansive Flows}, 
journal={Topology and its Applications},
volume={165},
year={2014},
pages={121--132}}

\bib{AvilaKocsard}{article}{
author={A. Avila},
author={A. Kocsard},
title={Cohomological Equations and Invariant Distributions for Minimal Circle Diffeomorphisms},
journal={Duke Mathematical Journal},
volume={158},
number={3},
year={2011},
pages={501--536}}

\bib{BB}{book}{
author={G. L. Baker},
author={J. A. Blackburn},
title={The Pendulum: a case study in physics},
publisher={Oxford University Press},
year={2005}}

\bib{BW}{article}{
author={R. Bowen and P. Walters}, 
title={Expansive One-Parameter Flows}, 
journal={Journal of Differential Equations}, 
year={1972}, 
pages={180--193},
volume={12}}

\bib{CL}{article}{
author={M. Cerminara},
author={J. Lewowicz},
title={Some open problems concerning expansive systems},
year={2010},
journal={Rend. Istit. Mat. Univ. Trieste},
volume={42},
pages={129--141}}

\bib{DH}{article}{
author={A. DeStefano},
author={G. R. Hall},
title={An Example of a Universally Observable Flow on the Torus},
journal={Siam J. Control Optim.},
year={1998},
volume={36},
number={4},
pages={1207--1224}}

\bib{GanWen}{article}{
author={S. Gan},
author={L. Wen},
title={Nonsingular star flows satisfy Axiom A and the no-cycle condition},
journal={Invent. math.},
volume={164},
year={2006},
pages={279--315}}

\bib{Gura75}{article}{
author= {A. A. Gura},
title={Separating diffeomorphisms of the torus},
journal={Mat. Zametki},
year={1975},
volume={18},
pages={41--49}}

\bib{Gura}{article}{
author= {A. A. Gura},
title={Horocycle flow on a surface of negative curvature is separating},
journal= {Mat. Zametki},
year={1984},
volume={36},
pages={279--284}}

\bib{Gutierrez}{article}{
author={C. Gutierrez},
title={Smoothing continuous flows on two-manifolds and recurrences},
journal={Ergod. Th \& Dynam. Sys.},
volume={6},
year={1986},
pages={17--14}}

\bib{Hartman}{book}{
author={P. Hartman},
title={Ordinary Differential Equations},
publisher={John Wiley \& Sons Inc., New York},
year={1964}}

\bib{KH}{book}{
author={A. Katok},
author={B. Hasselblatt},
title={Introduction to the Modern Theory of Dynamical Systems},
publisher={Cambridge University Press},
year={1995}}

\bib{KS79}{article}{
author={H. B. Keynes and M. Sears},
title={$\mathcal F$-expansive transformation group},
journal={General topology and its applications},
year={1979},
pages={67--85}}

\bib{KS}{article}{
author={H. B. Keynes and M. Sears},
title={Real-expansive flows and topological dimension},
journal={Ergod. Th \& Dynam. Sys.},
volume={1},
year={1981},
pages={179--195}}

\bib{K}{article}{
author={M. Komuro},
title={Expansive properties of Lorenz attractors},
journal={The Theory of dynamical systems and its applications
to nonlinear problems},
year={1984},
pages={4--26},
publisher={World Sci. Singapore}}

\bib{MSS}{article}{
author={K. Moriyasu},
author={K. Sakai},
author={W. Sun},
title={$C^1$-stably expansive flows},
journal={Journal of Differential Equations},
volume={213},
year={2005},
pages={352--367}}

\bib{Pa}{article}{
author={M. Paternain},
title={Expansive Flows and the Fundamental Group},
journal={Bol. Soc. Bras. Mat.},
year={1993},
volume={24},
number={2},
pages={179--199}
}

\bib{Pe}{article}{
author={M. M. Peixoto},
title={Structural stability on two-dimensional manifolds},
journal={Topology},
volume={1},
pages={101--120},
publisher={Pergamon Press},
year={1962}}

\bib{Rudin}{article}{
author={M. E. Rudin},
title={A Topological Characterization of Sets of Real Numbers},
journal={Pacific Journal of Mathematics},
volume={7},
number={2},
year={1957},
pages={1185--1186}}

\bib{W}{article}{
author={H. Whitney},
title={Regular Family of Curves},
journal={Annals of Mathematics},
year={1933},
volume={34},
number={2},
pages={244--270}}

\end{biblist}
\end{bibdiv}
\end{document}